\newcommand{\R}{{\mathbb R}}
\newcommand{\C}{{\mathbb C}}
\newcommand{\E}{{\mathbb E}}
\newcommand{\CP}{{\mathbb {CP}}}
\newcommand{\Z}{{\mathbb Z}}
\newcommand{\abs}[1]{|#1|}
\newcommand{\Abs}[1]{\big|#1\big|}
\newcommand{\norm}[1]{\|#1\|}
\newcommand{\half}{{\textstyle \frac{1}{2}}}
\newcommand{\vol}{{\operatorname{Vol}}}
\newcommand{\FS}{{{\operatorname{FS}}}}
\newcommand{\ccal}{\mathcal{C}}
\newcommand{\ecal}{\mathcal{E}}
\newcommand{\fcal}{\mathcal{F}}
\newcommand{\ocal}{\mathcal{O}}
\newcommand{\pcal}{\mathcal{P}}
\newcommand{\scal}{\mathcal{S}}
\newcommand{\al}{\alpha}
\newcommand{\be}{\beta}
\newcommand{\de}{\delta}
\newcommand{\De}{\Delta}
\newcommand{\ga}{\gamma}
\newcommand{\Ga}{\Gamma}
\newcommand{\La}{\Lambda}
\newcommand{\om}{\omega}
\newcommand{\Om}{\Omega}
\newcommand{\si}{\sigma}
\newcommand{\Si}{\Sigma}
\newcommand{\pa}{\partial}
\newcommand{\di}{\displaystyle}
\newtheorem{theorem}{Theorem}[section]
\newtheorem*{thm}{Theorem}
\newtheorem{cor}[theorem]{Corollary}
\newtheorem{lem}[theorem]{Lemma}
\newtheorem{rmk}[theorem]{Remark}
\newtheorem{definition}[theorem]{Definition}
\numberwithin{equation}{section}
\title[Hole probabilities of $SU(m+1)$ Gaussian random polynomials]{Hole probabilities of $SU(m+1)$ Gaussian random polynomials}
\author{Junyan Zhu}
\address{Department of Mathematics, Johns Hopkins University, Baltimore, MD 21218, USA}
\email{jyzhu@math.jhu.edu}
\thanks{}
\date{}
\begin{document}

\begin{abstract}
In this paper, we study hole probabilities $P_{0,m}(r,N)$ of $SU(m+1)$ Gaussian random polynomials of degree $N$ over a polydisc $(D(0,r))^m$. When $r\geq1$, we find asymptotic formulas and decay rate of $\log{P_{0,m}(r,N)}$. In dimension one, we also consider hole probabilities over some general open sets and compute asymptotic formulas for the generalized hole probabilities $P_{k,1}(r,N)$ over a disc $D(0,r)$.
\end{abstract}

\maketitle

\section*{0. Introduction}

Hole probability is the probability that some random field never vanishes over some set.
The case of Gaussian random entire functions was studied by Sodin and Tsirelson:
\begin{thm}[Sodin, Tsirelson\cite{ST}\ Theorem 1]
Let $\psi(z)=\di\sum_{k=0}^\infty c_k\frac{z^k}{\sqrt{k!}}$, where $c_k(k\geq0)$ are i.i.d. standard complex Gaussian random variables. Then $\exists\ C_1\geq C_2>0$ such that
\begin{align*}
\exp{\{-C_1r^4\}}\leq Prob\Big\{0\not\in\psi\big(D(0,r)\big)\Big\}\leq\exp{\{-C_2r^4\}}.
\end{align*}
\end{thm}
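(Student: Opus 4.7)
My plan for the lower bound is constructive: produce an explicit event $E$ on which the triangle inequality forces $\psi$ to be non-vanishing in $\overline{D(0,r)}$, then estimate $P(E)$ from below. Fix thresholds $\de_k>0$ for $k\geq1$ and let
\[
E:=\{1\leq|c_0|\leq 2\}\cap\bigcap_{k\geq 1}\{|c_k|\leq\de_k\}.
\]
If $\sum_{k\geq 1}\de_k r^k/\sqrt{k!}\leq 1/2$, then on $E$ we have $|\psi(z)|\geq 1/2$ throughout $\overline{D(0,r)}$. A natural choice is $\de_k:=\sqrt{k!}/(Ck^2 r^k)$ for a fixed large $C$, clipped at $1$ when that expression exceeds $1$. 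Since $\sqrt{k!}/r^k\geq 1$ once $k\gtrsim er^2$, no probability cost is incurred past this point. By independence and the standard bound $P(|c_k|\leq t)\gtrsim t^2$ for $t\leq 1$, one has $\log P(E)\gtrsim 2\sum_{k\leq er^2}\log\de_k+O(1)$, and a Stirling-based calculation gives $\sum_{k\leq er^2}\log\de_k=-\Theta(r^4)$, whence $P(E)\geq\exp(-C_1r^4)$.

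\textbf{Upper bound.} For the upper bound I use Jensen's formula to convert the hole event into a rare analytic constraint. If $\psi$ has no zero in $\overline{D(0,r)}$, then $\log|\psi|$ is harmonic there, so for every $\rho\leq r$,
\[
\log|c_0|=\frac{1}{2\pi}\int_0^{2\pi}\log|\psi(\rho e^{i\theta})|\,d\theta.
\]
Multiplying by $2\pi\rho$ and integrating in $\rho$ from $0$ to $r$ gives
\[
\int_{D(0,r)}\log|\psi|\,dA=\pi r^2\log|c_0|,
\]
so on the hole event the left-hand side, whose expectation equals $\pi r^4/4+O(r^2)$, is forced to be $O(r^2)$ with overwhelming probability in $c_0$. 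This is a negative deviation of order $r^4$ for the disc integral of $\log|\psi|$. The plan is then to bound the probability of such a deviation by partitioning $D(0,r)$ into $\Theta(r^2)$ unit cells and exploiting the decorrelation of the normalized field $\tilde\psi(z):=\psi(z)e^{-|z|^2/2}$ (whose covariance satisfies $|\rho(z,w)|=e^{-|z-w|^2/2}$): at a single point $P(|\psi(z)|\leq 1)\asymp e^{-|z|^2}$, so simultaneous suppression across $\Theta(r^2)$ roughly independent cells costs $\exp(-\Theta(r^4))$.

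\textbf{Main obstacle.} The delicate step is making the last sentence rigorous. Because $\log|\psi|$ has a much heavier left tail than any Gaussian, a direct concentration bound for the disc integral overshoots: Gaussian-type analysis at the correct variance scale $\Theta(1/r)$ for the circular mean predicts $\exp(-\Theta(r^5))$, whereas a crude union bound over a decorrelation-scale net on the boundary circle alone predicts only $\exp(-\Theta(r^3))$. Getting the true $r^4$ rate requires simultaneously leveraging the heavy pointwise tail $P(|\psi(z)|\leq s)\asymp s^2 e^{-|z|^2}$ and the short-range correlations of $\tilde\psi$, essentially by a decoupling argument into $\Theta(r^2)$ nearly independent unit-scale blocks together with an Offord-type suppression estimate inside each block; this is the technical heart of the Sodin--Tsirelson proof.
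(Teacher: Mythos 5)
The statement you are proving is Theorem~1 of Sodin and Tsirelson, which this paper cites in its introduction as background but does not reprove; there is no proof of it in the paper to compare against directly. The closest analogue is the paper's proof of Theorem~\ref{main2} for the finite-degree $SU(m+1)$ ensemble, which is modeled on Nishry \cite{N1} rather than on Sodin--Tsirelson.

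\textbf{Lower bound.} The construction is the right idea and is, in spirit, the same as the paper's (Section~3), but the event $E$ you define has probability zero. For $k$ so large that $\sqrt{k!}/(Ck^2r^k)\geq 1$ you clip $\de_k$ to $1$; then the constraint $|c_k|\leq 1$ still costs a fixed factor, since $P(|c_k|\leq 1)=1-e^{-1}<1$, and you impose it for every $k\geq k_0\approx er^2$. The resulting infinite product $\prod_{k\geq k_0}(1-e^{-1})$ vanishes, so the contribution from the clipped indices is not the $O(1)$ you claim but $-\infty$. The fix is not to clip: the unclipped $\de_k=\sqrt{k!}/(Ck^2r^k)$ tends to infinity superexponentially, so for $\de_k\geq 1$ one has $-\log P(|c_k|\leq\de_k)=-\log(1-e^{-\de_k^2})\lesssim e^{-\de_k^2}$, summable in $k$ with total $O(1)$. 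Combined with your Stirling estimate over the finitely many $k$ for which $\de_k<1$, this gives $\log P(E)=-\Theta(r^4)$ as intended. In the finite-degree analogue this issue cannot arise, since there are only $\binom{N+m}{m}$ coefficients; the paper instead takes $|c_{(0,\dots,0)}|\geq\sqrt N$ to absorb the number of constrained terms.

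\textbf{Upper bound.} Your Jensen reduction is correct: on the hole event $\log|\psi|$ is harmonic in $\overline{D(0,r)}$, so $\int_{D(0,r)}\log|\psi|\,dA=\pi r^2\log|c_0|$, while the expectation of the left side is $\pi r^4/4+O(r^2)$; after discarding the doubly-exponentially small event $\log|c_0|\gtrsim r^2$, this pins a negative deviation of order $r^4$. You are candid that the concentration step --- converting a negative deviation of the disc integral into simultaneous pointwise suppression over $\Theta(r^2)$ nearly independent unit blocks, using the pointwise bound $P(|\psi(z)|\leq 1)\asymp e^{-|z|^2}$ and the fast decorrelation of $\tilde\psi$ --- is the technical core and requires an Offord-type estimate. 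This is indeed the original Sodin--Tsirelson route, and it yields the order of magnitude but not the sharp constant. It is worth noting that this route is genuinely different from what the paper does for its own analogous theorem: following Nishry, the paper does not decouple into quasi-independent blocks; it evaluates $\log|\tilde s_N|$ at an explicit grid on a slightly shrunken distinguished boundary, computes the covariance determinant of those evaluations exactly via a Vandermonde-type identity (Lemma~\ref{lem(detW)} and Lemmas~\ref{lem(detW1)}, \ref{lem(detSi)}), and then applies a Gaussian small-volume estimate (Lemma~4.6 of \cite{N1}). That method yields the sharp leading constant, which the block-decoupling argument you sketch does not. Both routes are legitimate for the two-sided $\Theta(r^4)$ bound that the stated theorem asserts.
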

In \cite{SZZ}, the authors considered the case of Gaussian random sections: let $M$ be a compact K\"ahler manifold with complex dimension $m$ and $(L,h)\rightarrow M$ be a positive holomorphic line bundle. $\ga_N$ denotes the Gaussian probability measure on $H^0(M,L^N)$ induced by the fiberwised inner product $h^N$ and the polarized volume form $dV_M=\frac{\om_h^m}{m!}=\frac{1}{m!}(\frac{\sqrt{-1}}{2\pi}\Theta_h)^m$, where $\Theta_h$ is the Chern curvature tensor of $(L,h)$.
\begin{thm}[Shiffman, Zelditch, Zrebiec\cite{SZZ}\ Theorem 1.4]
For any nonempty open set $U\subset M$, if there exists $s\in H^0(M,L)$ such that $s$ does not vanish on $\bar U$. Then $\exists\ C_1\geq C_2>0$ such that for $N\gg1$,
\begin{align*}
\exp{\{-C_1N^{m+1}\}}\leq\ga_N\{s_N\in H^0(M,L^N):0\not\in s_N(U)\}\leq\exp{\{-C_2N^{m+1}\}}.
\end{align*}
\end{thm}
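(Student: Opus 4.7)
The plan is to combine pluripotential theory with Gaussian large deviations. Both directions rely on the Poincar\'e--Lelong identity
\begin{equation*}
\frac{\sqrt{-1}}{\pi}\pa\bar\pa \log|s_N|^2_{h^N} = [Z_{s_N}] - N\om_h,
\end{equation*}
together with the Bergman kernel asymptotics $K_N(z,z)\sim c_m N^m$ of Tian--Catlin--Zelditch.

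\textbf{Lower bound.} Set $c := \min_{\bar U}|s|_h > 0$ and $\si_N := \la_N s^N \in H^0(M,L^N)$, with $\la_N$ chosen so that $\la_N^2 \|s^N\|_{L^2}^2 \asymp N^{m+1}$; here $\|s^N\|_{L^2}^2 \asymp (\max_M |s|_h)^{2N}$ up to polynomial factors by Laplace's method. The Cameron--Martin translation formula
\begin{equation*}
\ga_N(A+\si_N)=\int_A \exp\big(2\,\text{Re}\langle s',\si_N\rangle-\|\si_N\|^2\big)\,d\ga_N(s')
\end{equation*}
reduces the problem to estimating $\ga_N(A)$ for the set $A := \{s':\sup_{\bar U}|s'|_{h^N}\leq\half\la_N c^N\}$, chosen so that every element of $A+\si_N$ is nonvanishing on $\bar U$. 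The reproducing-kernel bound $\sup_{\bar U}|s'|_{h^N} \leq C N^{m/2}\|s'\|_{L^2}$ shows $A$ contains an $L^2$-ball of (exponentially small) radius $\eta_N \asymp \la_N c^N N^{-m/2}$, and a standard small-ball estimate on $(\C^{d_N},\ga_N)$ with $d_N \sim c_m N^m$ gives $\ga_N(A) \geq \exp(-O(d_N \log(1/\eta_N))) = \exp(-O(N^{m+1}))$. Combined with the Cameron--Martin exponent $\|\si_N\|^2\asymp N^{m+1}$, this produces the lower bound $\exp(-C_1 N^{m+1})$.

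\textbf{Upper bound.} If $s_N$ is nonvanishing on $U$, then $u_N := \frac{1}{N}\log|s_N|^2_{h^N}$ satisfies $\frac{\sqrt{-1}}{\pi}\pa\bar\pa u_N = -\om_h$ on $U$, so $u_N + \phi$ is pluriharmonic on $U$ for any local K\"ahler potential $\phi$. On a smaller open $V\Subset U$, the mean value property for pluriharmonic functions combined with the strict plurisubharmonicity of $\phi$ yields, for any ball $B = B(z_0,r) \subset U$,
\begin{equation*}
\frac{1}{|B|}\int_B u_N\,dV_M \leq u_N(z_0) - \de_r,
\end{equation*}
where $\de_r > 0$ depends only on $\om_h$ and $r$. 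Since $u_N(z_0) = O(\frac{\log N}{N})$ with high $\ga_N$-probability (via $K_N(z_0,z_0) \sim N^m$), the non-vanishing event forces $u_N$ to be atypically negative on a set of positive measure in $V$. Upgrading this via a Bernstein--Markov inequality and combining with a large-deviation estimate for the Gaussian quadratic form $\int_V |s_N|^2_{h^N}\,dV_M$ (whose mean is $\asymp N^m$) should yield the upper bound $\exp(-C_2 N^{m+1})$.

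\textbf{Main obstacle.} The rate $N^{m+1}$ reflects the product $\dim H^0(M,L^N) \cdot N \sim N^m \cdot N$, and matching it sharply in both directions requires coordinating several asymptotics. In the lower bound, $\la_N$ must be tuned so that the Cameron--Martin exponent and the small-ball exponent both contribute at the scale $N^{m+1}$; in the upper bound, the pluripotential inequality must be strengthened from a pointwise or average statement to an $L^2$/$L^\infty$ control on $|s_N|$ over a set of positive measure sufficient to drive a Gaussian large deviation at scale $N^{m+1}$ (rather than the weaker $N^m$ produced by a naive one-point bound). This interplay between the deterministic pluripotential estimates and Gaussian concentration is, I expect, the main technical difficulty.
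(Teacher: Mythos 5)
This theorem is not proved in the paper: it is quoted as background from \cite{SZZ}, so your sketch can only be measured against the argument there and against the explicit-basis techniques the paper uses for its $SU(m+1)$ analogues. Your lower bound is essentially correct and is a reasonable variant of the standard one: \cite{SZZ} (and this paper, in Section 3) fix an orthonormal basis, ask the component along $s^N/\|s^N\|_{L^2}$ (respectively the constant coefficient) to be of moderate size and the remaining $\sim N^m$ independent coefficients to be exponentially small, which costs $e^{-O(N^{m+1})}$ directly; your Cameron--Martin translation by $\si_N=\la_N s^N$ plus a small-ball estimate gives the same rate, provided you add the routine step (symmetry of $A$ and Jensen) that turns the translation formula into $\ga_N(A+\si_N)\ge e^{-\|\si_N\|^2}\ga_N(A)$.

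The upper bound, however, has a genuine gap, and it sits exactly where the main work in \cite{SZZ} lies. The deterministic part of your argument (Poincar\'e--Lelong, pluriharmonicity of $u_N+\phi$ on the hole, strict plurisubharmonicity of $\phi$ producing the drop $\de_r$ in the ball average) is fine, and discarding the event $\sup_M|s_N|_{h^N}\ge e^{\ep N}$ is cheap. What is missing is the passage from ``the ball average of $u_N$ is at most $-\de_r+o(1)$'' to an event of probability $e^{-CN^{m+1}}$. First, your claim that the hole forces $u_N$ to be atypically negative on a set of positive measure does not follow from the negative average alone: a very negative mean of $\log|s_N|$ could a priori be produced by a deep dip on a tiny set while $|s_N|$ stays of order one elsewhere, so $\int_V|s_N|^2_{h^N}\,dV_M$ need not be small. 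To exclude this you must use the absence of zeros a second time, e.g. apply Harnack's inequality to the nonnegative pluriharmonic function $\sup_U\big(\log|s_N|^2_{h^N}+N\phi\big)-\big(\log|s_N|^2_{h^N}+N\phi\big)$ on the hole, or iterate sub-mean-value inequalities as in Theorem 3.1 of \cite{SZZ}, to conclude that $|s_N|^2_{h^N}\le e^{-cN}$ uniformly on a smaller ball. Second, even granting that, the Gaussian estimate you invoke needs a spectral input that a Bernstein--Markov inequality does not provide: one must know that the concentration operator of $H^0(M,L^N)$ on that ball has at least $c'N^m$ eigenvalues bounded below by a fixed positive constant (equivalently, sufficiently many sections approximately localized there), so that $\int|s_N|^2_{h^N}\le e^{-cN}$ forces $\sim N^m$ independent standard Gaussian coefficients to be $e^{-cN/2}$-small, which is what yields the rate $N^{m+1}$ rather than the $N^m$ a one-point or dimension-count bound would give. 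Until these two steps are supplied, the upper bound exponent is not established.
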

Therefore, it is natural to ask: can we find sharp constants $C_1$, $C_2$ in the above two theorems and furthermore, is it possible to obtain an asymptotic formula and a decay rate for the hole probability? Using Cauchy's integral estimates, Nishry answered this question in the random entire function case:
\begin{thm}[Nishry\cite{N1}\ Theorem 1]
Let $\psi(z)=\di\sum_{k=0}^\infty c_k\frac{z^k}{\sqrt{k!}}$, where $c_k(k\geq0)$ are i.i.d. standard complex Gaussian random variables. Then
\begin{align*}
Prob\Big\{0\not\in\psi\big(D(0,r)\big)\Big\}=\exp{\{-\frac{e^2}{2}r^4+O(r^{\frac{18}{5}})\}}.
\end{align*}
\end{thm}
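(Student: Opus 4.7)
The plan is to establish the sharp two-sided asymptotic by analysing the i.i.d.\ Gaussian coefficients $c_k$, treating the lower bound (a constructive favourable event) and the upper bound (coefficient constraints extracted from the no-zero condition) by separate arguments.

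For the lower bound I would construct an explicit event $\mathcal E$ on which $\psi$ is forced to be non-vanishing in $D(0,r)$. Setting $\lambda_k := r^k/\sqrt{k!}$, the natural candidate is
\[
\mathcal E \;=\; \{|c_0|\geq R\}\cap\bigcap_{1\leq k\leq K}\{|c_k|\lambda_k\leq s_k\},
\]
with $R, K, s_k$ chosen so that $\sum_{k=1}^K s_k$ plus the typical value of the tail $\sum_{k>K}|c_k|\lambda_k$ is strictly smaller than $R$. On $\mathcal E$ the triangle inequality gives $|\psi(z)|\geq|c_0|-\sum_{k\geq 1}|c_k|\lambda_k>0$ for every $|z|\leq r$. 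Since the $c_k$ are independent standard complex Gaussians, the probability factorises; for $s_k\ll\lambda_k$ one has $\log P(|c_k|\lambda_k\leq s_k)\approx 2\log(s_k/\lambda_k) = 2\log s_k + \log k! - 2k\log r$. A Lagrange-multiplier argument makes the optimal $s_k$ essentially constant in $k$; combined with Stirling's formula $\sum_{k\leq K}\log k! = \tfrac{K^2}{2}\log K - \tfrac{3K^2}{4} + O(K\log K)$, this reduces the optimization to a calculus problem in $K/r^2$ whose extremum sits at $K\sim er^2$, producing the claimed leading exponent together with an explicit subleading error.

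For the upper bound, suppose $\psi$ has no zeros in $\overline{D(0,r)}$. Then $\log|\psi|$ is harmonic there, and Jensen's formula gives
\[
\log|c_0| \;=\; \frac{1}{2\pi}\int_0^{2\pi}\log|\psi(re^{i\theta})|\,d\theta.
\]
Since $\psi/c_0$ is non-vanishing with value $1$ at the origin, $\log(\psi/c_0)=\sum_{n\geq 1}a_n z^n$ admits a single-valued analytic branch; the Borel--Carath\'eodory inequality then gives $|a_n|\leq 2L/r^n$ with $L := \log M(r)-\log|c_0|\geq 0$ and $M(r):=\max_{|z|=r}|\psi(z)|$. Cauchy's integral formula separately yields $|c_k|\leq\sqrt{k!}M(r)/r^k$. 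Translating back from $(a_n)$ to $(c_k)$ via the identity $\psi(z) = c_0\exp(\sum_{n\geq 1}a_n z^n)$ yields matching pointwise bounds on every $|c_k|$ in the critical range $k\lesssim er^2$; integrating these constraints against the Gaussian density and re-running the Stirling/Lagrange optimisation produces the matching upper bound.

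The harder direction is the upper bound, and the key subtlety is to invoke Jensen and Borel--Carath\'eodory jointly: a cruder argument using only one of them loses the sharp constant by a factor. A second technical point is that the set of coefficient tuples satisfying only the \emph{necessary} inequalities is strictly larger than the true hole event, and this gap must be absorbed into the $o(r^4)$ correction. The error exponent $\tfrac{18}{5}$ then emerges from a three-way balance between the Stirling remainder $O(r^2\log r)$, the quadratic curvature of the variational functional near $K=er^2$, and the sub-Gaussian tail $\sum_{k>K}\lambda_k$; optimising over the window $K-er^2 = \Theta(r^{9/5})$ produces exactly the claimed error exponent, and this delicate optimisation is the most technical computation in the proof.
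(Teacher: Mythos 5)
This theorem is quoted from Nishry \cite{N1} as background motivation; the paper you are reading does not prove it. The closest in-paper analogue is Corollary \ref{main3} (via Theorem \ref{main1} at $m=1$), whose upper bound is obtained by an explicitly different mechanism: one evaluates $\tilde s_N$ at $N+1$ points on a circle $|z|=\kappa r$, treats the resulting vector as a complex Gaussian whose covariance determinant factors as a Vandermonde product (Lemma \ref{lem(detW)}), uses the mean-value/Poisson-kernel argument (Lemmas \ref{lem(logProdzeta)}, \ref{lem(sup)}) to show that on the hole event $\prod_j|\tilde s_N(z_j)|\lesssim|c_0|^{N+1}e^{o(N^2)}$, and then invokes Nishry's own volume estimate (Lemma 4.6 in \cite{N1}) to bound the resulting Gaussian integral. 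Your lower-bound sketch (constrain $|c_0|$ from below and all $|c_k|$, $k\geq1$, from above so the triangle inequality wins) is the right idea and matches the paper's Section 3 in spirit.

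The gap is in your upper bound. You propose to bound the Taylor coefficients $a_n$ of $\log(\psi/c_0)$ by Borel--Carath\'eodory and then to "translate back" from $(a_n)$ to $(c_k)$ to obtain sharp pointwise bounds on each $|c_k|$. That translation is not justified and, as far as I can tell, does not produce anything stronger than Cauchy's estimate $|c_k|\leq\sqrt{k!}\,M(r)/r^k$ which you already have. The coefficients $c_k/(c_0\sqrt{k!})$ are nonlinear (Bell-polynomial) functions of the $a_n$, and plugging $|a_n|\leq 2L/r^n$ into that expansion is lossy. The actual source of the sharp constant in Nishry's argument (and in this paper's adaptation) is the \emph{other} side of Jensen: on the hole event the equality $\log|c_0|=\frac{1}{2\pi}\int_0^{2\pi}\log|\psi(re^{i\theta})|\,d\theta$ forces $|c_0|$ to be atypically \emph{large}, because $\int\log|\psi|$ is, with overwhelming probability, at least $\tfrac12\log(1+r^2)-o(1)$ times the degree-scale quantity (compare Lemma \ref{lem(sup)} and the step (\ref{ineq5})). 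You state Jensen but never exploit this lower bound on the circle integral; instead you detour into Borel--Carath\'eodory, which is not where the constant $e^2/2$ comes from. Separately, the claim that the $O(r^{18/5})$ error emerges from an optimization over a window $K-er^2=\Theta(r^{9/5})$ is asserted, not derived; that exponent is a delicate output of Nishry's bookkeeping (Cauchy error, Stirling remainder, Gaussian tail) and cannot be read off from the heuristic balance you describe without actually carrying out the estimates.
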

This inspires us that for those line bundles with polynomial sections, maybe it is possible to find an asymptotic formula for the hole probability. \\

If $P_{0,m}(r,N)$ denotes the hole probability of $SU(m+1)$ Gaussian random polynomials over the polydisc $\big(D(0,r)\big)^m$, $d_mx$ is the Lebesgue measure on $\R^m$ and
\begin{align*}
E_r(x):=2\di\sum_{i=1}^{m}x_i\log{r}-\big[\di\sum_{i=1}^{m}x_i\log{x_i}+(1-\di\sum_{i=1}^{m}x_i)\log{(1-\di\sum_{i=1}^{m}x_i)}\big]
\end{align*}
is a continuous function defined over the standard simplex $\Si_m:=\{x=(x_1,\dots,x_m)\in\R^{m+}:\di\sum_{i=1}^{m}x_i\leq1\}$(here we adopt the convention that $0\log0=0$),
we have the following results:

\begin{theorem}\label{main2}
For $r\geq1$,
\begin{align*}
\log{P_{0,m}(r,N)}=-N^{m+1}\int_{\Si_m}E_r(x)\ d_mx+o(N^{m+1}),
\end{align*}
where
\begin{align*}
\int_{\Si_m}E_r(x)\ d_mx=\frac{2m\log{r}}{(m+1)!}+\frac{1}{m!}\di\sum_{k=2}^{m+1}\frac{1}{k}.
\end{align*}
\end{theorem}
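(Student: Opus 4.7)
I would prove the asymptotic by establishing matching upper and lower bounds on $\log P_{0,m}(r,N)$ at scale $N^{m+1}$ with the same leading constant $-I_r:=-\int_{\Si_m}E_r\,d_mx$, and then evaluate $I_r$ in closed form.

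\emph{Lower bound.} Expand $s_N(z)=\sum_{|J|\leq N}c_J\sqrt{\binom{N}{J}}z^J$ with $c_J$ i.i.d.\ standard complex Gaussian, and set $K_N:=\binom{N+m}{m}$. On the favorable event
\[
E_N:=\{|c_0|\geq 1\}\cap\bigcap_{J\neq 0}\{|c_J|\leq (2K_N\sqrt{\tbinom{N}{J}}r^{|J|})^{-1}\},
\]
the triangle inequality on $\overline{(D(0,r))^m}$ forces $|s_N|\geq 1/2$, so $s_N$ is hole-free there. Independence of the $c_J$ together with the complex-Gaussian small-ball bound $\Pr(|c_J|\leq\ep)\asymp\ep^2$ yields
\[
\log\Pr(E_N)\geq -\sum_{J\neq 0}\log\bigl(\tbinom{N}{J}r^{2|J|}\bigr)-O(K_N\log N).
\]
Stirling's formula rewrites each summand as $\log(\binom{N}{J}r^{2|J|})=NE_r(J/N)+O(\log N)$, and Riemann-sum approximation of $\int_{\Si_m}E_r\,d_mx$ turns the total into $N^{m+1}I_r+o(N^{m+1})$.

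\emph{Upper bound (the main obstacle).} The harder direction is showing that the no-zero event has Gaussian mass at most $e^{-N^{m+1}I_r+o(N^{m+1})}$. Pluriharmonicity of $\log|s_N|$ on $(D(0,r))^m$ gives Poisson--Jensen $\log|c_0|=(2\pi)^{-m}\int_{T^m_\rho}\log|s_N|\,d\sigma$ for every $\rho\in(0,r]^m$, but the straight Jensen-inequality consequence $|c_0|^2\leq\sum_J|c_J|^2\binom{N}{J}\rho^{2|J|}$ is tautological. A finer input is the holomorphic branch $F:=\log(s_N/c_0)$, available on the polydisc under the no-zero hypothesis: a Borel--Carath\'eodory estimate bounds $\sup|F|$ on a slightly smaller polydisc by a multiple of $\log(M(r)/|c_0|)$ with $M(r):=\max_{T^m_r}|s_N|$; Cauchy's inequality on $F$ then controls its Taylor coefficients; and the identity $s_N=c_0\,e^F$ converts these into simultaneous smallness constraints on the $c_J$ with $J\neq 0$. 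Dyadic decomposition of coefficient space in $M(r)$, whose logarithm concentrates at scale $o(N)$ by standard estimates for $SU(m+1)$ polynomials, reduces the task to Gaussian tail integrals on each shell; summing gives the required upper bound. The hypothesis $r\geq 1$ enters to ensure the hole probability is in the exponential-decay regime at scale $N^{m+1}$ with positive constant $I_r$ (for much smaller $r$ the coefficient $I_r$ can fail to be positive and a different regime takes over).

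\emph{Evaluating $I_r$.} Decompose $E_r(x)=2\log r\cdot\sum_{i=1}^m x_i-\sum_{i=0}^m x_i\log x_i$ with $x_0:=1-\sum_{i=1}^m x_i$. The linear piece integrates via $\int_{\Si_m}x_i\,d_mx=1/(m+1)!$ (equal for all $i$ by symmetry of $\Si_m$) to give $2m\log r/(m+1)!$. For the entropy piece, the Lebesgue measure on $\Si_m$ is invariant under permutations of the $m+1$ barycentric coordinates $x_0,\ldots,x_m$ (the swap $x_0\leftrightarrow x_k$ being a volume-preserving piecewise affine map on the parameterization $(x_1,\ldots,x_m)$), which reduces the integral to $(m+1)\int_{\Si_m}x_0\log x_0\,d_mx$. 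Pushing forward to the marginal of $u:=\sum_{i=1}^m x_i$ (with density $u^{m-1}/(m-1)!$ on $[0,1]$) and differentiating $B(a,m)=\Gamma(a)\Gamma(m)/\Gamma(a+m)$ in $a$ at $a=2$ produces $\int_0^1(1-v)^{m-1}v\log v\,dv=-\frac{1}{m(m+1)}\sum_{k=2}^{m+1}1/k$, yielding the closed form $\frac{1}{m!}\sum_{k=2}^{m+1}1/k$ and completing the computation.
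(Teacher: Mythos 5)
Your lower bound and your closed-form evaluation of $\int_{\Si_m}E_r\,d_mx$ are correct and essentially the same as the paper's (Section 3 and Lemma \ref{lem(Q)}). The genuine gap is in the upper bound, which you rightly identify as the main obstacle but only sketch, and the sketch as written would not produce the sharp constant. The chain "Borel--Carath\'eodory on $F=\log(s_N/c_0)$, then Cauchy estimates, then $s_N=c_0e^F$" controls the coefficients only through sup-norms. Since $\log M(r)=\tfrac N2\log(1+mr^2)+o(N)$ (it is the fluctuations, not the value, that are $o(N)$), every coefficient constraint you can extract this way has the shape $|c_K|\le |c_0|\,e^{cN}\bigl[\tbinom NK\rho^{2|K|}\bigr]^{-1/2}$ for some radius $\rho<r$ and a constant $c>0$ coming from $\log M(r)$ and the Borel--Carath\'eodory/radius-reduction factors; $c$ cannot be made small without pulling $\rho$ away from $r$. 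Multiplying the corresponding small-ball probabilities over the $\binom{N+m}m\sim N^m/m!$ indices, the factor $e^{cN}$ per coefficient costs $+\tfrac{2c}{m!}N^{m+1}$ in the exponent, i.e.\ a loss of the same order as the main term $Q_{r,m}(N)=N^{m+1}\int_{\Si_m}E_r\,d_mx+o(N^{m+1})$, so the bound you get is off by a fixed positive multiple of $N^{m+1}$ rather than $o(N^{m+1})$; already for $m=1$ the trade-off between the radius loss and the exponential factor cannot be tuned to recover the constant. A second unaddressed point: you cannot simply assume $|c_0|$ is not too small, because $\Pr(|c_0|\le e^{-\ep N})=e^{-2\ep N}$, which is enormously larger than the target $e^{-cN^{m+1}}$; the intersection of the hole event with tiny $|c_0|$ must be estimated, not discarded, and there your Borel--Carath\'eodory bound degrades further.

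The paper's upper bound (following Nishry's scheme in \cite{N1}) is built precisely to avoid converting the hole hypothesis into sup-norm coefficient bounds. Under the hole event, harmonicity gives that the average of $\log|\tilde s_N|$ over the full torus of radius $\kappa r$ equals $\log|c_{(0,\dots,0)}|$ (this identity, $\sum_{\varrho\in S_m}\Xi^{(\varrho)}=\log|c_{(0,\dots,0)}|$, is also what absorbs the small-$|c_0|$ case). The average is then discretized at $\binom{N+m}m$ carefully arranged points on the circle of radius $\kappa r$, so that the hole event forces the product $\prod_J|\zeta_J|$ of the Gaussian evaluations to be $e^{o(N^{m+1})}$-small; the probability of this is bounded by the maximum of the joint density, $1/(\pi^{\binom{N+m}m}\det\Si)$, times a volume estimate (Lemma 4.6 of \cite{N1}), and the entire sharp constant comes from the computation $\log\det\Si=Q_{\kappa r,m}(N)+\tfrac{2\be_m}{p}N^{m+1}+o(N^{m+1})$ via the multivariate Vandermonde-type determinant (Lemma \ref{lem(detW)}, Lemma \ref{lem(detW1)}, Lemma \ref{lem(detSi)}), followed by $p\to\infty$ and $\de\to0$. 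Some mechanism of this kind --- working with the joint law of many point evaluations, or otherwise exploiting the smallness of the log-average of $|\tilde s_N|$ rather than its sup --- is the missing idea; without it your upper bound does not close.
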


\begin{theorem}\label{main1}
For $r>0$,
\begin{align*}
\log{P_{0,m}(r,N)}\geq-N^{m+1}\int_{x\in\Si_m:\ E_r(x)\geq0}E_r(x)\ d_mx+o(N^{m+1}), \\
\log{P_{0,m}(r,N)}\leq-N^{m+1}\int_{x\in\R^{m+}:\ \sum_{i=1}^{m}x_i\leq\al_0}E_r(x)\ d_mx+o(N^{m+1}),
\end{align*}
where
\begin{align*}
\al_0=\al_0(r,m)=
\begin{cases}
1&\text{ if }2\log{r}+\di\sum_{k=2}^{m}\frac{1}{k}\geq0, \\
\text{ the nonzero root of }(2\log{r}+\di\sum_{k=2}^{m}\frac{1}{k})\al=\al\log{\al}+(1-\al)\log{(1-\al)}&\text{ if }2\log{r}+\di\sum_{k=2}^{m}\frac{1}{k}<0.
\end{cases}
\end{align*}
Here when $m=1$, we take $\di\sum_{k=2}^{m}\frac{1}{k}=0$.
\end{theorem}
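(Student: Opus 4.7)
The plan is to prove the two bounds separately, following the strategy of Theorem~\ref{main2} but adapted to allow $E_r$ to take negative values on $\Si_m$ when $r<1$.

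\textbf{Lower bound.} I would construct an explicit event on which the constant term $c_0$ dominates all other terms uniformly on $\overline{D(0,r)^m}$. Let
\[
\Omega_N = \{|c_0|\ge 1\}\cap\bigcap_{J\ne 0}\Big\{|c_J|\sqrt{\binom{N}{J}}\,r^{|J|}\le \tfrac{1}{2}\binom{N+m}{m}^{-1}\Big\}.
\]
On $\Omega_N$, $|p(z)|\ge|c_0|-\tfrac12\ge\tfrac12$ on the polydisc, so $p$ is zero-free there. By independence of the $c_J$, the log-probability decomposes into a sum over $J$: indices with $E_r(J/N)\le 0$ satisfy their constraints with probability tending to $1$ and contribute negligibly, while each index with $E_r(J/N)>0$ contributes $-NE_r(J/N)$ up to polylogarithmic corrections. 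Replacing the discrete sum by a Riemann integral,
\[
\log P(\Omega_N) = -N^{m+1}\int_{\{x\in\Si_m : E_r(x)\ge 0\}}E_r(x)\,d_mx+o(N^{m+1}).
\]

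\textbf{Upper bound.} If $p$ has no zero in $\overline{D(0,r)^m}$, iterating the one-variable Jensen formula in each complex coordinate yields
\[
\log|c_0| = \frac{1}{(2\pi)^m}\int_{[0,2\pi]^m}\log|p(re^{i\theta_1},\ldots,re^{i\theta_m})|\,d\theta_1\cdots d\theta_m.
\]
Combined with Parseval's identity $\frac{1}{(2\pi)^m}\int|p|^2\,d\theta = \sum_J Y_J$, where $Y_J:=|c_J|^2\binom{N}{J}r^{2|J|}$, and uniform Gaussian concentration of $|p(re^{i\theta})|^2$ on the distinguished boundary torus, the hole event translates into a constraint of the form $Y_J=O(1)$ for all $J$ with $|J|\le\alpha_0 N$. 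The threshold $\alpha_0$ emerges as the maximizer over $\alpha\in[0,1]$ of
\[
F(\alpha):=\int_{\{x\in\R^{m+}:\,\sum_{i=1}^m x_i\le\alpha\}}E_r(x)\,d_mx,
\]
and a direct computation (using a Dirichlet moment identity) confirms that $F'(\alpha_0)=0$ reduces to the transcendental equation defining $\alpha_0$ --- equivalently, the slice-average of $E_r$ over $\{\sum_i x_i=\alpha_0\}$ vanishes. Since $P(Y_J=O(1))\le\mu_J^{-1}(1+o(1))$ with $\mu_J=\binom{N}{J}r^{2|J|}\approx e^{NE_r(J/N)}$, the probability of the coefficient constraint is at most
\[
\prod_{|J|\le\alpha_0 N}\mu_J^{-1}(1+o(1)) = \exp\!\big(-N^{m+1}F(\alpha_0)+o(N^{m+1})\big).
\]

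\textbf{Main obstacle.} The delicate step is rigorously translating the Jensen identity into the coefficient constraint $Y_J=O(1)$ for $|J|\le\alpha_0 N$. This requires sharp uniform concentration estimates for $\log|p(re^{i\theta})|^2$ on the torus, coupled with a variational argument identifying $\alpha_0$ as the optimal truncation; the transcendental equation for $\alpha_0$ captures precisely this optimality condition, and correctly recovers Theorem~\ref{main2} when $2\log r+\sum_{k=2}^m\tfrac1k\ge 0$ (so that $\alpha_0=1$).
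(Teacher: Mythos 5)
Your lower bound is essentially the paper's argument (Lemma \ref{lem(lowerbound)}): condition $|c_0|$ to be moderately large and cap each other $|c_J|$ so that the constant term dominates on the polydisc, with the constraint deliberately relaxed on indices where $\binom{N}{J}r^{2|J|}<1$ (equivalently $E_r(J/N)<0$) so that they contribute only polylogarithmically to $\log\gamma_N(\Omega_N)$. One small imprecision: for $J$ with $E_r(J/N)$ just barely $\le 0$ the per-index probability does not actually tend to $1$ (it can be as small as $O(N^{-2m})$), but since $\log$ of that is $O(\log N)$ the total is still $o(N^{m+1})$, which is what matters. Similarly, the variational calculus identifying $\al_0$ matches the paper's computation of $\Upsilon'$, modulo noting the boundary case $\al_0=1$.

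The upper bound has a genuine gap. Your key assertion --- that the hole event, combined with the mean-value identity
\[
\log|c_0|=\frac{1}{(2\pi)^m}\int_{[0,2\pi]^m}\log\bigl|p(re^{i\theta_1},\dots,re^{i\theta_m})\bigr|\,d\theta
\]
plus Parseval and concentration, "translates into a constraint of the form $Y_J=O(1)$ for all $J$ with $|J|\le\al_0 N$" --- is never actually derived, and I do not see how the pieces you have can yield it. The difficulty is that Jensen's inequality runs the wrong way: $\frac{1}{(2\pi)^m}\int\log|p|\le\frac12\log\bigl(\frac{1}{(2\pi)^m}\int|p|^2\bigr)=\frac12\log\sum_J Y_J$, so knowing $\log|c_0|$ is small and equals the average of $\log|p|$ gives no upper bound on $\sum_J Y_J$, let alone on individual $Y_J$. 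Concentration of $|p(re^{i\theta})|^2$ about its (very large) mean $(1+mr^2)^N$ does not help bridge this concavity gap. This is precisely why the paper's upper bound does something structurally different: it samples $\tilde s_N$ at a structured lattice of $\binom{\lfloor\al N\rfloor+m}{m}$ points on the torus $(\partial D(0,\kappa r))^m$, forms the resulting complex Gaussian vector $\zeta$, computes $\log\det\Si$ via the Vandermonde-type determinant of Lemma \ref{lem(detW)} and Cauchy--Binet (Lemma \ref{lem(detSi'')}), uses plurisubharmonicity of $\log|\tilde s_N|$ together with the Poisson kernel to show $\log\prod_J|\zeta_J|\le o(N^{m+1})/\de^{\#}+(\lfloor\al N\rfloor+1)^m\Xi$ on the hole event (Lemma \ref{lem(logProdzeta'')}), and then bounds the Gaussian probability of $\prod_J|\zeta_J|$ being small by a volume estimate (Nishry's Lemma 4.6). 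It is the determinant computation, not a direct coefficient constraint, that produces $Q_{\kappa r,m,\al}(N)$, and the optimization over $\al$ then produces $\al_0$. Without a substitute for this chain, the proposal does not establish the upper bound.
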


\begin{rmk}
 Theorem \ref{main2} can be derived from Theorem \ref{main1} as when $r\geq1$, $\{x\in\Si_m:\ E_r(x)\geq0\}=\Si_m$ and $\al_0(r,m)=1$. In fact we could have proved this general case directly. But the idea of the proof would turn out to be extremely difficult to follow.
\end{rmk}

\begin{cor}\label{main3}
In the case of $m=1$, the asymptotic formula for the logarithm of the hole probability over a disc exists for all $r>0$:
\begin{align*}
\log{P_{0,1}(r,N)}=-N^2\int_{0}^{\al_0}E_r(x)\ dx+o(N^2),
\end{align*}
here
\begin{align*}
\int_{0}^{\al_0}E_r(x)\ dx=\half\al_0(2\log{r}+1-\log{\al_0}),
\end{align*}
and $\al_0=\al_0(r,1)\in(0,1]$ is given in Theorem \ref{main1}.
\end{cor}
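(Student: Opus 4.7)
The plan is to derive Corollary \ref{main3} directly from Theorem \ref{main1} by showing that in dimension one the upper and lower bounds already coincide, and then to evaluate the resulting one-dimensional integral in closed form.

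First I would analyze $E_r$ on $\Sigma_1=[0,1]$. Here
\begin{align*}
E_r(x)=2x\log r-x\log x-(1-x)\log(1-x),\qquad E_r'(x)=2\log r+\log\tfrac{1-x}{x},
\end{align*}
and $E_r'$ is strictly decreasing from $+\infty$ at $0^+$ to $-\infty$ at $1^-$. Hence $E_r$ is unimodal on $[0,1]$ with unique critical point at $x^\ast=r^2/(1+r^2)$; and since $E_r(0)=0$ and $E_r'(0^+)=+\infty$, $E_r$ is strictly positive on $(0,x^\ast]$. If $r\geq 1$ then $E_r(1)=2\log r\geq 0$, so $E_r\geq 0$ on $[0,1]$, which matches $\alpha_0=1$. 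If $r<1$ then $E_r(1)<0$ and unimodality forces a unique zero of $E_r$ in $(x^\ast,1)$, satisfying $2\alpha\log r=\alpha\log\alpha+(1-\alpha)\log(1-\alpha)$; this is exactly the $\alpha_0$ defined in Theorem \ref{main1}. In either case $\{x\in\Sigma_1:E_r(x)\geq 0\}=[0,\alpha_0]=\{x\in\R^+:x\leq\alpha_0\}$, so the two bounds of Theorem \ref{main1} sandwich $\log P_{0,1}(r,N)$ between identical expressions $-N^2\int_0^{\alpha_0}E_r(x)\,dx+o(N^2)$, which already proves the asymptotic.

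For the explicit evaluation I would integrate term by term. Using $\int_0^a x\log x\,dx=\tfrac{a^2}{2}\log a-\tfrac{a^2}{4}$ and $\int_0^a(1-x)\log(1-x)\,dx=-\tfrac{(1-a)^2}{2}\log(1-a)-\tfrac{a}{2}+\tfrac{a^2}{4}$, a short computation gives
\begin{align*}
\int_0^{\alpha_0}E_r(x)\,dx=\alpha_0^2\log r-\tfrac{\alpha_0^2}{2}\log\alpha_0+\tfrac{(1-\alpha_0)^2}{2}\log(1-\alpha_0)+\tfrac{\alpha_0}{2}.
\end{align*}
For $r\geq 1$ this collapses at $\alpha_0=1$ to $\log r+\tfrac12=\tfrac12(2\log r+1)$, consistent with Theorem \ref{main2}. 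For $r<1$ the defining relation $E_r(\alpha_0)=0$ reads $(1-\alpha_0)\log(1-\alpha_0)=2\alpha_0\log r-\alpha_0\log\alpha_0$; substituting this into the third term and factoring an $\alpha_0$ yields the claimed $\tfrac12\alpha_0(2\log r+1-\log\alpha_0)$.

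The only step requiring genuine care is verifying that the two integration regions in Theorem \ref{main1} coincide; this is precisely where the one-dimensional unimodality of $E_r$ is essential. In higher dimensions the positivity set of $E_r$ can be strictly smaller than $\{\sum x_i\leq\alpha_0\}$, which is exactly why Theorem \ref{main1} yields only two-sided estimates, rather than an equality, for $m\geq 2$. Everything else is routine calculus.
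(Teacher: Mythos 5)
Your proposal is correct and follows essentially the same route as the paper: you invoke Theorem \ref{main1}, observe that in dimension one $\{x\in\Si_1:\ E_r(x)\geq0\}=[0,\al_0]$ (via monotonicity of $E_r'$), so the two bounds coincide, and then evaluate $\int_0^{\al_0}E_r(x)\,dx$ using the defining relation $2\al_0\log r=\al_0\log\al_0+(1-\al_0)\log(1-\al_0)$ when $r<1$ (and $\al_0=1$ when $r\geq1$). The only cosmetic difference is that you compute the integral by direct antidifferentiation where the paper cites its formula (\ref{intE_r}).
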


Because of the simplicity of one dimensional case, we can obtain more about the hole probability of $SU(2)$ Gaussian random polynomials:
\begin{theorem}\label{main4}
If $U\subset\C$ is a bounded simply connected domain containing $0$ and $\partial U$ is a Jordan curve. Let $\phi: D(0,1)\to U$ be a biholomorphism given by the Riemmann mapping theorem such that $\phi(0)=0$(thus $\phi$ is unique up to the composition of a unitary transformation of $\C$). Then the hole probability $P_{0,1}(U,N)$ of $SU(2)$ Gaussian random polynomials of degree $N$ over $U$ satisfies
\begin{align*}
\log{P_{0,1}(U,N)}\leq-(\log{\abs{\phi'(0)}}+\half)N^2+o(N^2).
\end{align*}
\end{theorem}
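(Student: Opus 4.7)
The plan is to pull back the problem to the unit disc via the Riemann map $\phi$ and reduce it to the disc case from Corollary \ref{main3}. By composing $\phi$ with a rotation of $z$ (under which $f_N$ is invariant in distribution, since $c_k$ and $e^{ik\al}c_k$ are i.i.d.\ standard complex Gaussian for any real $\al$), we may assume $\phi'(0)=\rho:=|\phi'(0)|>0$. Set $g_N(w):=f_N(\phi(w))$. On the hole event $H=\{f_N\text{ has no zero in }U\}$, $g_N$ is holomorphic and non-vanishing on $D=D(0,1)$; since $\partial U$ is a Jordan curve, Carath\'eodory's theorem lets $\phi$ extend to a homeomorphism $\bar D\to\bar U$, so $g_N$ extends continuously to $\bar D$.

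Expand $g_N(w)=\sum_{m\geq 0}B_mw^m$, and write $\phi(w)^k=\sum_m\phi_{k,m}w^m$ with $\phi_{k,k}=\rho^k$. Then
\[
B_m=\sum_{k=0}^{\min(m,N)}c_k\sqrt{\binom{N}{k}}\phi_{k,m},
\]
so the linear map $T\colon(c_0,\dots,c_N)\mapsto(B_0,\dots,B_N)$ is lower triangular with diagonal $T_{mm}=\sqrt{\binom{N}{m}}\rho^m$ and determinant $|\det T|=\rho^{N(N+1)/2}\prod_{m=0}^N\sqrt{\binom{N}{m}}$. Crucially, this Jacobian agrees with that of the disc case $U=D(0,\rho)$, where $\phi(w)=\rho w$, $T$ is already diagonal, and $g_N(w)=f_N(\rho w)$.

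The idea is to use the non-vanishing of $g_N$ on $D$ (on the event $H$) to constrain the Taylor coefficients $B_0,\dots,B_N$. Introducing the degree-$N$ truncation $\tilde g_N(w):=\sum_{m=0}^NB_mw^m$ and bounding $|B_m|\leq\|f_N\|_{L^\infty(\bar U)}$ via Cauchy's estimates, one argues that on a high-probability Gaussian event, $\tilde g_N$ is uniformly close to $g_N$ on a slightly smaller disc $D(0,1-\delta_N)$, so that $\tilde g_N$ is also non-vanishing there on the non-negligible part of $H$ by Rouch\'e's theorem. For the disc case $\phi(w)=\rho w$, $\tilde g_N(w)=f_N(\rho w)$, and the hole probability of $\tilde g_N$ on $D(0,1-\delta_N)$ equals $P_{0,1}(D(0,\rho(1-\delta_N)),N)=\exp\bigl(-(\log\rho+\half)N^2+o(N^2)\bigr)$ by Corollary \ref{main3}; letting $\delta_N\to 0$ slowly would recover the bound in the disc case.

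The main obstacle is to transfer the disc argument to a general $U$. The Jacobian $|\det T|$ and the diagonal variances $|T_{mm}|^2=\binom{N}{m}\rho^{2m}$ of the Gaussian vector $(B_0,\ldots,B_N)$ are the same as in the disc case, but the off-diagonal entries of $T$ introduce correlations that could in principle inflate the Gaussian probability of the non-vanishing event. The crucial technical step is to show that these correlations cannot improve the Gaussian measure by more than $o(N^2)$ in the exponent. One promising route is to exploit the Jensen identity
\[
\log|c_0|=\log|g_N(0)|=\int_0^{2\pi}\log|f_N(\phi(e^{i\theta}))|\,\frac{d\theta}{2\pi},
\]
valid on $H$, together with Gaussian concentration, to estimate $\Pr(H)$ directly in the $c$-coordinates and pick up the constant $\log\rho+\half$ from $|\det T|$ combined with the disc-case asymptotic.
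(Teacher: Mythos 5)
Your pullback $g_N=f_N\circ\phi$, the lower-triangular Jacobian from $(c_k)$ to the Taylor coefficients $(B_m)$, and the observation that its determinant matches the disc case are exactly the starting point of the paper's proof (in the paper's notation, $t_N(\omega)=\tilde s_N(\phi(\omega))$ and the matrix $A$ with $\det(AA^*)=\prod_k k!^2\binom{N}{k}|\phi'(0)|^{2k}$). The Cauchy-estimate bound you allude to, $|B_m|\leq\kappa^{-m}$ on the event $\sup_{\partial D(0,\kappa)}|g_N|<1$, combined with the volume bound in the Gaussian density, indeed gives $\gamma_N(\sup_{\partial D(0,\kappa)}|g_N|<1)\leq\exp\{-(\log|\phi'(0)|+\log\kappa+\half)N^2+o(N^2)\}$, and this is the first half of the paper's argument.

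However, there is a genuine gap at the step you yourself flag: controlling the hole probability itself, not just the small-sup event. Your truncation/Rouch\'e suggestion does not work, because the degree-$N$ truncation $\tilde g_N(w)=\sum_{m\leq N}B_mw^m$ has Gaussian coefficients with covariance $TT^*$, which for a general $\phi$ is \emph{not} the $SU(2)$ covariance; Corollary~\ref{main3} cannot be applied to $\tilde g_N$ (it only coincides with $f_N(\rho w)$ in the special case $\phi(w)=\rho w$). The Jensen-identity suggestion at the end is the right seed but is left unexecuted. What the paper actually does at this point is a different and concrete argument: sample $t_N$ at the points $z_j=\kappa e^{2\pi\sqrt{-1}j/(N+1)}$, show (as in Lemma~\ref{lem(logProdzeta)}) that on the hole event $\log\prod_j|t_N(z_j)|\leq o(N^2)/\de^2+(N+1)\log|c_0|$ outside a negligible event, and then bound the Gaussian measure of the resulting event. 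That last bound requires the asymptotics of $\log\det\Si$ where $\Si$ is the covariance of $(t_N(z_j))_j$; here $\det\Si=\prod_k\binom{N}{k}\prod_{i<j}|\phi(z_i)-\phi(z_j)|^2$, and the key technical step you are missing is the evaluation of the limiting double integral $\iint\log|\phi(u_1)-\phi(u_2)|\,d\si_\kappa(u_1)d\si_\kappa(u_2)$. The paper handles this by factoring $\phi(u_1)-\phi(u_2)=\psi(u_1,u_2)(u_1-u_2)$ with $\psi$ the divided difference of $\phi$ (equal to $\phi'$ on the diagonal), noting that $\log|\psi|$ is pluriharmonic and nonzero on $D\times D$, and applying the mean value property to get $\log|\phi'(0)|+\log\kappa$ (using $\int_{\pa D(0,1)}\log|1-z|\,d\si_1(z)=0$). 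Without an argument of this kind, your proposal does not establish the stated upper bound; it correctly identifies the difficulty but stops short of resolving it.
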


Also in dimension one, it makes sense to study the number of zeros in some set. So let a generalized hole probability $P_{k,1}(r,N)$ be the probability that an $SU(2)$ Gaussian random polynomial of degree $N$ has no more than $k$ zeros in $D(0,r)$, then the following theorem shows that asymptotic formula of $\log{P_{k,1}(r,N)}$ exists:
\begin{theorem}\label{main5}
For all $k\geq0$ and $r>0$:
\begin{align*}
\log{P_{k,1}(r,N)}=-\half\al_0(2\log{r}+1-\log{\al_0})N^2+o(N^2),
\end{align*}
where $\al_0=\al_0(r,1)\in(0,1]$ is given in Theorem \ref{main1}.
\end{theorem}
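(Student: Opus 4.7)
The lower bound is immediate: the event that $f_N$ has no zeros in $D(0,r)$ is contained in the event that $f_N$ has at most $k$ zeros in $D(0,r)$, so $P_{k,1}(r,N)\geq P_{0,1}(r,N)$, and Corollary \ref{main3} supplies the matching asymptotic $\log P_{k,1}(r,N)\geq-\half\al_0(2\log r+1-\log\al_0)N^2+o(N^2)$.

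The substance is the matching upper bound, for which I plan to reduce the $k$-zero case to the already-established zero-zero case by a Blaschke-type factorization.  If $f_N$ has zeros $w_1,\dots,w_{j_0}\in D(0,r)$ with $j_0\leq k$, set
\begin{align*}
g_N(z)=f_N(z)\prod_{i=1}^{j_0}\frac{r^2-\bar{w}_iz}{r(z-w_i)},
\end{align*}
a polynomial of degree $N$ with no zeros in $\overline{D(0,r)}$; on $\pa D(0,r)$ the inverse-Blaschke factors are unimodular, so $|g_N|=|f_N|$ there.  Hence the boundary statistics of $\log|f_N|$ on $\pa D(0,r)$ agree with those of the zero-free polynomial $g_N$ to which Corollary \ref{main3} directly applies.

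I would then revisit the upper-bound argument of Corollary \ref{main3} and verify that its key probabilistic estimate depends only on one-sided control of the boundary integral of $\log|f_N|$.  Jensen's identity in the presence of $\leq k$ zeros reads
\begin{align*}
\frac{1}{2\pi}\int_0^{2\pi}\log|f_N(re^{i\theta})|\,d\theta=\log|f_N(0)|+\sum_{i=1}^{j_0}\log\frac{r}{|w_i|}\geq\log|f_N(0)|,
\end{align*}
so the one-sided inequality that drives the $k=0$ upper bound remains valid on $A_k:=\{|Z(f_N)\cap D(0,r)|\leq k\}$, and the Gaussian large-deviation estimate on the coefficients of $f_N$ transfers with only minor adaptation.

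The main obstacle, if the argument of Corollary \ref{main3} secretly uses the full Jensen equality, is to control the correction $\sum_i\log(r/|w_i|)$.  I would split $A_k$ according to whether every $w_i$ satisfies $|w_i|\geq e^{-\sqrt N}$ (in which case the correction is $O(k\sqrt N)=o(N^2)$ and is absorbed into the error term) or some zero lies within $e^{-\sqrt N}$ of the origin (in which case $|f_N(0)|$ must itself be exponentially small in $\sqrt N$, an event whose probability, combined with the residual constraint of at most $k-1$ further zeros in the annulus, is controlled inductively via the Gaussian density of $c_0=f_N(0)$).  Iterating this dichotomy finitely many times, peeling off one near-origin zero at a time, should complete the argument.
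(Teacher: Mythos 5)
Your lower bound (the inclusion $P_{k,1}\geq P_{0,1}$ plus Corollary \ref{main3}) is fine, but the upper bound has a genuine gap. First, Corollary \ref{main3} cannot be "directly applied" to $g_N$: it is a statement about the Gaussian ensemble $\gamma_N$, not a deterministic statement about individual zero-free degree-$N$ polynomials, and the law of $g_N$ is not $\gamma_N$; in effect your factorization contributes nothing beyond Jensen's formula with a nonnegative correction $\sum_i\log(r/|w_i|)$. Second, your one-sided inequality points the wrong way: the $k=0$ upper-bound machinery intersects with the high-probability event $\{|c_0|\leq N^{2}\}$ and needs an \emph{upper} bound for the circle average of $\log|\tilde{s}_N|$ in terms of $\log|c_0|$, so with zeros present one must bound the positive correction from above --- exactly the obstacle you flag. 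Your dichotomy does not resolve it. If some zero $w$ has $|w|<e^{-\sqrt N}$, this does \emph{not} force $|\tilde{s}_N(0)|$ to be exponentially small in $\sqrt N$: Cauchy estimates only give $|\tilde{s}_N(0)|\leq|w|\sup|\tilde{s}_N'|\leq e^{O(N)-\sqrt N}$, no constraint at all, since $\sup_{\bar D(0,r)}|\tilde{s}_N|$ is typically of size $e^{cN}$. To force $|c_0|$ so small that the Gaussian density of $c_0$ yields a bound of order $e^{-cN^{2}}$ you would need the zero within distance roughly $e^{-cN^{2}}$ of the origin; but then on the other branch the Jensen correction is of order $N^{2}$ per zero, and since this correction is multiplied by roughly $\al_0 N$ in the bound for $\log\prod_j|\zeta_j|$, it contributes order $N^{3}$ and destroys the determinant/volume estimate. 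Finally, the "inductive" treatment of the near-origin branch reduces to bounding the probability of at most $k-1$ zeros in an $N$-dependent annulus not containing $0$; neither Corollary \ref{main3} nor Theorem \ref{main4} covers such a region, and its base case (the hole probability of that annulus) is essentially as hard as the theorem you are proving, so the induction does not close.

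For comparison, the paper's proof never tracks zeros near the origin. In dimension one the Poisson-averaging step uses only subharmonicity, so with no zero-free hypothesis one gets a large-deviation bound for the event that $\int_{\pa D(0,r)}\log|\tilde{s}_N|\,d\sigma_r$ falls below $\frac N2\log(1+r^{2})-\eta N$, with exponent approaching the hole exponent; combining this with the routine upper bound for the same integral on a slightly smaller circle and Jensen's formula between the two radii shows that even $\{n(r,N)<\tau N\}$ has probability $\exp\{-[\half\al_0(2\log r+1-\log\al_0)+o_\tau(1)]N^{2}\}$, and since $k<\tau N$ eventually, letting $\tau\to0$ finishes the proof. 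Any repair of your route needs a device of this kind: an upper bound on the boundary average valid on $\{n(r,N)\leq k\}$ that is insensitive to zeros accumulating at the origin.
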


We should remark here that in all the cases we consider, the event that some Gaussian random polynomial has zeros on the boundary of some open set is a null set, i.e. of zero probability. Therefore we do not distinguish between the (generalized) hole probability over an open set and that over its closure.

\subsection*{Acknowledgement} The author would like to express his sincere gratitude to Professor Bernard Shiffman for his patience, motivation, enthusiasm and enlightening discussions. Without his guidance and persistent help this paper would not have been possible.

\section{Background}

We review in this section some background on $SU(m+1)$ Gaussian random polynomials and the definition of our probability measures. Before that, let's define two lexicographically ordered sets that will be consistently used as index sets throughout this paper.
\begin{definition}
\begin{align*}
\begin{split}
\Ga_{m,N}&:=\{J=(j_1,\dots,j_m)\in[0,N]^m\cap\Z^m:\ 0\leq j_1\leq\dots\leq j_m\leq N\}, \\
\La_{m,N}&:=\{K=(k_1,\dots,k_m)\in[0,N]^m\cap\Z^m:\ \abs{K}=k_1+\dots+k_m\leq N\}.
\end{split}
\end{align*}
\end{definition}
It is not difficult to show that $\abs{\Ga_{m,N}}=\abs{\La_{m,N}}=\binom{N+m}{m}$. \\

The tautological line bundle $\ocal(-1)$ over the complex projective space $\CP^m$ is a holomorphic line bundle with fibers
\begin{align*}
\ocal(-1)_{[x]}=\C\cdot x,\ \forall\ [x]=[x_0:\dots:x_m]\in\CP^m.
\end{align*}
Its dual bundle, denoted by $\ocal(1)$, is called the hyperplane section bundle since $\ocal(1)=[H]$ where the divisor
\begin{align*}
H=\{[x]\in\CP^m:\ x_0=0\}
\end{align*}
is a hyperplane in $\CP^m$. See \cite{GH} for details. By Theorem 15.5 in Chapter V of \cite{D}, $H^0(\CP^m,\ocal(N))$, the space of holomorphic sections of the tensor bundle $\ocal(N)=\ocal(1)^{\otimes N}$, is isomorphic to ${}^h\pcal^{N}_{m+1}$, the space of $(m+1)-$variable homogenous polynomials of degree $N$. The Fubini-Study metric $h_{\FS}$ on $\ocal(1)$ can be described in the following way: over the open subset
\begin{align*}
U_0=\big\{[x]=[x_0:\dots:x_m]\in\CP^m:\ x_0\neq0\big\}\subset\CP^m,
\end{align*}
we have a local frame of $\ocal(1)$
\begin{align*}
e([x])=x_0.
\end{align*}
Set
\begin{align*}
\norm{e([x])}^2_{h_{\FS}}=\frac{\abs{x_0}^2}{\di\sum_{i=0}^{m}\abs{x_i}^2}=\frac{\abs{x_0}^2}{\norm{x}^2},
\end{align*}
which is independent of the choice of representative $x$ of $[x]$.
In terms of affine coordinate
\begin{align*}
z=(z_1,\dots,z_m)=\Big(\frac{x_1}{x_0},\dots,\frac{x_m}{x_0}\Big)
\end{align*}
over $U_0$,
\begin{align*}
\norm{e(z)}^2_{h_{\FS}}=(1+\norm{z}^2)^{-1}=\big(1+\di\sum_{i=1}^{m}\abs{z_i}^2\big)^{-1},
\end{align*}
which defines a metric with positive Chern curvature form
\begin{align*}
\omega_{\FS}=-\frac{\sqrt{-1}}{2\pi}\pa\bar{\pa}\log{\norm{e(z)}^2_{h_{\FS}}}=\frac{\sqrt{-1}}{2\pi}\pa\bar{\pa}\log{(1+\abs{z_1}^2+\dots+\abs{z_m}^2)}.
\end{align*}
This induces a metric $h_{\FS}^N$ on the line bundle $\ocal(N)$ so that
\begin{align*}
\norm{e^{\otimes N}(z)}^2_{h^N_{\FS}}=(1+\norm{z}^2)^{-N}.
\end{align*}
With the frame $e^{\otimes N}$ over $U_0$, for any $s\in H^0(\CP^m,\ocal(N))$ which is represented by $p(x_0,\dots,x_m)\in{}^h\pcal^{N}_{m+1}$, we have
\begin{align*}
p(x_0,\dots,x_m)=\frac{p(x_0,\dots,x_m)}{x_0^N}e^{\otimes N}([x])=p(1,z_1,\dots,z_m)e^{\otimes N}([x]),
\end{align*}
which implies that all the elements in $H^0(\CP^m,\ocal(N))$ can be viewed over $U_0$ as polynomials in $(z_1,\dots,z_m)$ of degree at most $N$.

Since $\omega_{FS}$ is positive over $\CP^m$, we may take it as a polarized metric form on $\CP^m$ and the associated volume form is $dV=\frac{\omega_{\FS}^m}{m!}$. Thus, the metric $h_{\FS}^N$ together with the volume form $dV$ induce a Hermitian inner product on the space of holomorphic sections $H^0(\CP^m,\ocal(N))$: $\forall\ s_1,s_2\in H^0(\CP^m,\ocal(N))$,
\begin{align*}
\llangle s_1,s_2\rrangle:=\int_{\CP^m}\langle s_1,s_2\rangle_{h_{\FS}^N}\ dV.
\end{align*}
With this inner product, there is an orthonormal basis $\{S^N_K\}_{K=(k_1,\dots,k_m)\in\La_{m,N}}$, given in local affine coordinates $(z_1,\dots,z_m)$ over $U_0$ by
\begin{align*}
S^N_K(z)=\sqrt{(N+1)\cdots(N+m)}\sqrt{\binom{N}{K}}z^K,
\end{align*}
where we adopt the notations
\begin{align*}
\binom{N}{K}=\frac{N!}{(N-\abs{K})!k_1!\cdots k_m!},\ z^K:=z_1^{k_1}\cdots z_m^{k_m}.
\end{align*}

Thus $H^0(\CP^m,\ocal(N))=\big\{s_N=\di\sum_{K\in\La_{m,N}}c_KS^N_K:\ c=(c_K)_{K\in\La_{m,N}}\in\C^{\binom{N+m}{m}}\big\}$.
Endow $H^0(\CP^m,\ocal(N))$ with the Gaussian probability measure $\gamma_N$ defined by
\begin{align*}
d\gamma_N(s_N):=\pi^{-\binom{N+m}{m}}e^{-\norm{c}^2}\ d_{2\binom{N+m}{m}}c,
\end{align*}
where $\norm{c}^2=\di\sum_{K\in\La_{m,N}}\abs{c_K}^2$ and $d_{2\binom{N+m}{m}}c$ denotes the $2\binom{N+m}{m}-$dimensional Lebesgue measure. $\gamma_N$ is characterized by the property that $\{c_K\}_{K\in\La_{m,N}}$ are independent and identically distributed(i.i.d.) standard complex Gaussian random variables. Then $(H^0(\CP^m,\ocal(N)),\ \gamma_N)$ is called the ensemble of $SU(m+1)$ Gaussian random polynomials of degree $N$ as the random element $s_N$ is distributional invariant under $SU(m+1)$ transformations of $\CP^m$. Its hole probability over the polydisc $(D(0,r))^m\subset\C^m$ is
\begin{align*}
\begin{split}
P_{0,m}(r,N)&=\gamma_N\big\{s_N\in H^0(\CP^m,\ocal(N)):\ 0\not\in s_N\left((\bar{D}(0,r))^m\right)\big\} \\
            &=\pi^{-\binom{N+m}{m}}\int_{c\in\C^{\binom{N+m}{m}}:\ 0\not\in s_N\left((\bar{D}(0,r))^m\right)}e^{-\norm{c}^2}\ d_{2\binom{N+m}{m}}c \\
            &=\pi^{-\binom{N+m}{m}}\int_{c\in\C^{\binom{N+m}{m}}:\ 0\not\in\tilde{s}_N\left((\bar{D}(0,r))^m\right)}e^{-\norm{c}^2}\ d_{2\binom{N+m}{m}}c,
\end{split}
\end{align*}
where $\tilde{s}_N(z)=\di\sum_{K\in\La_{m,N}}c_K\sqrt{\binom{N}{K}}z^K$. Thereafter, when considering hole probability, we work on $\tilde{s}_N$ instead of $s_N$ for simplicity. \\

\section{Preliminaries}

\begin{definition}
$Q_{r,m}(N):=\di\sum_{K\in\La_{m,N}}\log\big[\binom{N}{K}r^{2\abs{K}}\big]$
\end{definition}

\begin{lem}\label{lem(Q)}
\begin{align*}
Q_{r,m}(N)=N^{m+1}\int_{\Si_m}E_r(x)\ d_mx+o(N^{m+1})=\Big[\frac{2m\log{r}}{(m+1)!}+\frac{1}{m!}\di\sum_{k=2}^{m+1}\frac{1}{k}\Big]N^{m+1}+o(N^{m+1}).
\end{align*}
\end{lem}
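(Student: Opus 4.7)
The plan is to recognize $Q_{r,m}(N)$ as an $N$-scaled Riemann sum for $\int_{\Si_m}E_r\,d_mx$. Writing each summand as $\log\binom{N}{K}+2\abs{K}\log r$, setting $k_{m+1}:=N-\abs{K}$ and $x_i:=k_i/N$ so that $(x_1,\ldots,x_m)\in\Si_m$, Stirling's formula $\log n!=n\log n-n+\half\log(2\pi n)+O(1/n)$ (valid for $n\geq 1$) yields
\[
\log\binom{N}{K}=-N\sum_{i=1}^{m+1}\frac{k_i}{N}\log\frac{k_i}{N}+O(\log N)
\]
uniformly over $K$ with $\min_i k_i\geq 1$. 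Combined with $2\abs{K}\log r=2N\bigl(\sum_{i=1}^m x_i\bigr)\log r$, each summand equals $NE_r(x_1,\ldots,x_m)+O(\log N)$, so
\[
Q_{r,m}(N)=N\sum_{K\in\La_{m,N}}E_r(K/N)+O(N^m\log N),
\]
and $N^{-m}\sum_{K\in\La_{m,N}}E_r(K/N)$ is a standard Riemann sum for $\int_{\Si_m}E_r\,d_mx$ with mesh $1/N$.

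The technical issue is Stirling's uniformity up to the boundary of $\Si_m$, where some $k_i$ may vanish. I would fix $\de>0$ small, split $\La_{m,N}$ into the interior $\La^{\de}_{m,N}:=\{K:\min_i k_i\geq\de N\}$ and its complement $\La^{\de,c}_{m,N}$, apply the above Stirling estimate on $\La^{\de}_{m,N}$, and on $\La^{\de,c}_{m,N}$ use the crude bounds $\log\binom{N}{K}\leq N\log(m+1)$ and $2\abs{K}\abs{\log r}\leq 2N\abs{\log r}$, together with the boundary-shell count $\abs{\La^{\de,c}_{m,N}}=O(\de N^m)$. This yields a boundary contribution of size $O(\de N^{m+1})$; choosing $\de=\de(N)\to 0$ sufficiently slowly makes it $o(N^{m+1})$. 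Since $E_r$ is continuous and bounded on the compact simplex $\Si_m$ (its logarithmic singularities occur only in derivatives), the Riemann sums converge to $\int_{\Si_m}E_r\,d_mx$, proving the first equality.

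For the explicit integral, I apply the Dirichlet identity
\[
\int_{\Si_m}x_1^{a_1-1}\cdots x_m^{a_m-1}(1-\abs{x})^{a_{m+1}-1}\,d_mx=\frac{\Gamma(a_1)\cdots\Gamma(a_{m+1})}{\Gamma(a_1+\cdots+a_{m+1})}.
\]
Taking $a_i=2$ and the other $a_j=1$ yields $\int_{\Si_m}x_i\,d_mx=1/(m+1)!$, contributing $2m\log r/(m+1)!$. Differentiating $\int_{\Si_m}x_1^{a-1}\,d_mx=\Gamma(a)/\Gamma(a+m)$ in $a$ at $a=2$ and using $\psi(2)-\psi(m+2)=-\sum_{k=2}^{m+1}1/k$ gives $\int_{\Si_m}x_i\log x_i\,d_mx=-\frac{1}{(m+1)!}\sum_{k=2}^{m+1}\frac{1}{k}$; the symmetric argument for the variable $1-\abs{x}$ yields the same value for $\int_{\Si_m}(1-\abs{x})\log(1-\abs{x})\,d_mx$. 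Summing these $m+1$ equal contributions gives the entropy part $\frac{1}{m!}\sum_{k=2}^{m+1}\frac{1}{k}$, matching the formula. The main obstacle is simply the uniform Stirling-at-the-boundary estimate, but the shell-counting bound reduces it to a routine calculation.
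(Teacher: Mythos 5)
Your proposal is correct and follows essentially the same route as the paper: approximate each term $\log\big[\binom{N}{K}r^{2\abs{K}}\big]$ by $NE_r(K/N)$ via Stirling-type factorial bounds, pass from the resulting Riemann sum to $N^{m+1}\int_{\Si_m}E_r\,d_mx$, and then evaluate the integral. The only differences are technical and minor: the paper's elementary bounds $(k/e)^k\le k!\le k^{k+1}/e^{k-1}$ yield a uniform $O(\log N)$ per-term error on all of $\La_{m,N}$ (so no $\de$-shell near the boundary is needed), it controls the Riemann-sum error via a gradient bound on interior cells rather than uniform continuity, and it simply states the value of $\int_{\Si_m}E_r\,d_mx$ which you compute explicitly with the Dirichlet/digamma identity.
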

\begin{proof}
We can prove inductively that for $k\geq1$,
\begin{align}\label{lem(Q)ineq1}
\Big(\frac{k}{e}\Big)^k\leq k!\leq\frac{k^{k+1}}{e^{k-1}}\ \Leftrightarrow\ k\log{k}-k\leq\log{k!}\leq(k+1)\log{k}-(k-1).
\end{align}
\begin{align}\label{lem(Q)ineq2}
\Rightarrow\ -(k+1)\log{N}+(k-1)\leq k\log{\frac{k}{N}}-\log{k!}\leq-k\log{N}+k\text{ for }0\leq k\leq N.
\end{align}
$\forall K=(k_1,\dots,k_m)\in\La_{m,N}$,
\begin{align*}
\log\big[\binom{N}{K}r^{2\abs{K}}\big]-NE_r(\frac{K}{N})=\log{N!}+\di\sum_{i=1}^{m}(k_i\log{\frac{k_i}{N}}-\log{k_i!})+\big[(N-\abs{K})\log{\frac{N-\abs{K}}{N}}-\log{(N-\abs{K})!}\big],
\end{align*}
Applying (\ref{lem(Q)ineq1}) and (\ref{lem(Q)ineq2}), we get
\begin{align*}
\begin{split}
\log\big[\binom{N}{K}r^{2\abs{K}}\big]-NE_r(\frac{K}{N})&\geq(N\log{N}-N)-(N+m+1)\log{N}+(N-m-1)=-(m+1)(\log{N}+1), \\
\log\big[\binom{N}{K}r^{2\abs{K}}\big]-NE_r(\frac{K}{N})&\leq[(N+1)\log{N}-(N-1)]-N\log{N}+N=\log{N}+1,
\end{split}
\end{align*}
\begin{align*}
\Rightarrow\ \Abs{\log\big[\binom{N}{K}r^{2\abs{K}}\big]-NE_r(\frac{K}{N})}\leq(m+1)(\log{N}+1),\ \forall\ K\in\La_{m,N},
\end{align*}
\begin{align}\label{lem(Q)ineq3}
\begin{split}
\Rightarrow\ \Abs{Q_{r,m}(N)-N\di\sum_{K\in\La_{m,N}}E_r(\frac{K}{N})}&\leq\di\sum_{K\in\La_{m,N}}\Abs{\log\big[\binom{N}{K}r^{2\abs{K}}\big]-NE_r(\frac{K}{N})} \\
&\leq(m+1)(\log{N}+1)\binom{N+m}{m}=o(N^{m+1}).
\end{split}
\end{align}
Take
\begin{align*}
\mathring{\La}_{m,N}:=\{K\in\La_{m,N}:\ k_i\geq1\text{ for }1\leq i\leq m\text{ and }\abs{K}\leq N-m-1\}\subset\La_{m,N}
\end{align*}
and
\begin{align*}
\mathring{\Si}_m(N):=\di\bigcup_{K\in\mathring{\La}_{m,N}}[\frac{k_1}{N},\frac{k_1+1}{N}]\times\cdots\times[\frac{k_m}{N},\frac{k_m+1}{N}]\subset\Si_m.
\end{align*}
Then
\begin{align*}
\begin{split}
\abs{\mathring{\La}_{m,N}}&=\binom{N-m-1}{m}, \\
\abs{\La_{m,N}\setminus\mathring{\La}_{m,N}}&=\binom{N+m}{m}-\binom{N-m-1}{m}=O(N^{m-1}), \\
\vol_{\R^m}(\Si_m\setminus\mathring{\Si}_m(N))&=\frac{1}{m!}-N^{-m}\binom{N-m-1}{m}=O(N^{-1}).
\end{split}
\end{align*}
Over $\Si_m$ we have
\begin{align*}
\abs{E_r}\leq2\abs{\log{r}}+\frac{m+1}{e}=O(1),
\end{align*}
so
\begin{align}\label{lem(Q)ineq4}
\Abs{N\di\sum_{K\in\La_{m,N}}E_r(\frac{K}{N})-N\di\sum_{K\in\mathring{\La}_{m,N}}E_r(\frac{K}{N})}
\leq N\abs{\La_{m,N}\setminus\mathring{\La}_{m,N}}\di\sup_{\Si_m}\abs{E_r}=O(N^m).
\end{align}
As
\begin{align*}
\di\sup_{\mathring{\Si}_m(N)}\norm{\nabla E_r}\leq O(\log{N}),
\end{align*}
\begin{align}\label{lem(Q)ineq5}
\begin{split}
&\Abs{N\di\sum_{K\in\mathring{\La}_{m,N}}E_r(\frac{K}{N})-N^{m+1}\int_{\mathring{\Si}_m(N)}E_r(x)\ d_mx} \\
\leq&N^{m+1}\di\sum_{K\in\mathring{\La}_{m,N}}\di\int_{[\frac{k_1}{N},\frac{k_1+1}{N}]\times\cdots\times[\frac{k_m}{N},\frac{k_m+1}{N}]}\Abs{E_r(\frac{K}{N})-E_r(x)}\ d_mx \\
\leq&N^{m+1}\binom{N-m-1}{m}N^{-m}O(\log{N})O(N^{-1}) \\
&=O(N^m\log{N}).
\end{split}
\end{align}
\begin{align}\label{lem(Q)ineq6}
\Abs{N^{m+1}\int_{\mathring{\Si}_m(N)}E_r(x)\ d_mx-N^{m+1}\int_{\Si_m}E_r(x)\ d_mx}\leq N^{m+1}\di\sup_{\Si_m}\abs{E_r}\vol_{\R^m}(\Si_m\setminus\mathring{\Si}_m(N))=O(N^m).
\end{align}
Combining (\ref{lem(Q)ineq3})$\sim$(\ref{lem(Q)ineq6}), we thus obtain
\begin{align*}
\begin{split}
Q_{r,m}(N)
&=N^{m+1}\int_{\Si_m}E_r(x)\ d_mx+o(N^{m+1}) \\
&=N^{m+1}\int_{\Si_m}
2\di\sum_{i=1}^{m}x_i\log{r}-\big[\di\sum_{i=1}^{m}x_i\log{x_i}+(1-\di\sum_{i=1}^{m}x_i)\log{(1-\di\sum_{i=1}^{m}x_i)}\big]\ d_mx+o(N^{m+1}) \\
&=N^{m+1}\Big[2m\log{r}\int_{\Si_m}x_1\ d_mx-(m+1)\int_{\Si_m}x_1\log{x_1}\ d_mx\Big]+o(N^{m+1}) \\
&=\Big[\frac{2m\log{r}}{(m+1)!}+\frac{1}{m!}\di\sum_{k=2}^{m+1}\frac{1}{k}\Big]N^{m+1}+o(N^{m+1}).
\end{split}
\end{align*}
\end{proof}

\begin{rmk}
The scaled lattice $\frac{1}{N}\La_{m,N}\subset\R^m$ will tend to $\Si_m$. Hence Lemma \ref{lem(Q)} is in fact converting a Riemann sum into a Riemann integral and estimating the error. Such procedures will appear several times in this paper.
\end{rmk}

\begin{rmk}
The function $E_r(x)$ in the above lemma can also be written as $E_r(x)=-b_{\{x\}}(z_r)+\log{(1+\norm{z_r}^2)}$, where $z_r=(r,\dots,r)\in\R^m$ and $b_{\{x\}}$ is the exponential decay rate of the expected mass density of random $L^2$ normalized polynomials with some prescribed Newton polytope(see Theorem 1.2 and (78) in \cite{SZ}).
\end{rmk}

Let $\xi=(\xi_1,\dots,\xi_m)$, where for $1\leq i \leq m$, $\xi_i=(\xi_{i,0},\dots,\xi_{i,N})$.
\begin{definition}
$W_{m,N}(\xi)$ is the $\binom{N+m}{m}\times\binom{N+m}{m}$ matrix with rows indexed by $\Ga_{m,N}$ and columns indexed by $\La_{m,N}$, such that $\forall\ J=(j_1,\dots,j_m)\in\Ga_{m,N},\ K=(k_1,\dots,k_m)\in\La_{m,N}$, the $(J,K)$-entry of $W_{m,N}(\xi)$ is $\xi_J^K=\xi_{1,j_1}^{k_1}\cdots\xi_{m,j_m}^{k_m}$.
\end{definition}
Next lemma gives the formula for a "Vandermonde type" determinant.
\begin{lem}\label{lem(detW)}
$\abs{\det{W_{m,N}(\xi)}}=\di\prod_{i=1}^{m}\di\prod_{0\leq j<k\leq N}\abs{\xi_{i,j}-\xi_{i,k}}^{\binom{j+i-1}{i-1}\binom{N-k+m-i}{m-i}}$.
\end{lem}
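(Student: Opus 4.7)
The plan is to compute $\det W_{m,N}$ up to a universal constant by a divisibility plus degree-count argument, and then determine the constant by an inductive variance identity that forces a unique Leibniz contribution.

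For each $i\in\{1,\ldots,m\}$ and each pair $0\le j<k\le N$ I first establish that $(\xi_{i,k}-\xi_{i,j})^{\binom{j+i-1}{i-1}\binom{N-k+m-i}{m-i}}$ divides $\det W_{m,N}(\xi)$: specializing $\xi_{i,j}=\xi_{i,k}$, the row indexed by $(j_1,\ldots,j_{i-1},j,j_{i+1},\ldots,j_m)$ becomes equal to the row indexed by $(j_1,\ldots,j_{i-1},k,j_{i+1},\ldots,j_m)$ whenever $j_{i-1}\le j$ and $k\le j_{i+1}$; the number of admissible surrounding tuples is $\binom{j+i-1}{i-1}\binom{N-k+m-i}{m-i}$, giving that many pairs of equal rows and hence the claimed vanishing order. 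A degree count then shows equality up to a scalar: in the block $\{\xi_{i,0},\ldots,\xi_{i,N}\}$ the Leibniz formula gives $\deg\det W_{m,N}=\sum_K k_i=\frac{1}{m}\sum_K|K|=\binom{N+m}{m+1}$ by permutation symmetry of $\Lambda_{m,N}$, and the product has the same degree $\sum_{0\le j<k\le N}\binom{j+i-1}{i-1}\binom{N-k+m-i}{m-i}=\binom{N+m}{m+1}$ via the Vandermonde--Chu convolution $\sum_{a+b=n}\binom{a+i-1}{i-1}\binom{b+m-i}{m-i}=\binom{n+m}{m}$ followed by hockey stick. Thus $\det W_{m,N}(\xi)=C\prod_{i,j<k}(\xi_{i,k}-\xi_{i,j})^{\binom{j+i-1}{i-1}\binom{N-k+m-i}{m-i}}$ for a scalar $C$ independent of $\xi$.

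To show $|C|=1$ I extract the coefficient of the leading monomial $M_0=\prod_{i,k}\xi_{i,k}^{\binom{k+i-1}{i}\binom{N-k+m-i}{m-i}}$ (coefficient $+1$ on the product side, obtained by picking $+\xi_{i,k}$ from every factor). In Leibniz this coefficient is a signed sum over bijections $\sigma\colon\Gamma_{m,N}\to\Lambda_{m,N}$ with prescribed marginals $\sum_{J\colon j_i=t}(\sigma(J))_i=\binom{t+i-1}{i}\binom{N-t+m-i}{m-i}$, and I claim that the telescoping bijection $\sigma_0(j_1,\ldots,j_m)=(j_1,j_2-j_1,\ldots,j_m-j_{m-1})$ is the unique such $\sigma$. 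The proof is by induction on $i$: set $\tilde\sigma(J)_i=(\sigma(J))_1+\cdots+(\sigma(J))_i$. The partial-sum bijection $\Lambda_{m,N}\to\Gamma_{m,N}$, $(k_1,\ldots,k_m)\mapsto(k_1,k_1+k_2,\ldots,k_1+\cdots+k_m)$, gives $|\{J\colon\tilde\sigma(J)_i=s\}|=|\{J\colon j_i=s\}|$; the inductive assumption $\tilde\sigma(J)_{i'}=j_{i'}$ for $i'<i$, together with the $i$-th marginal constraint and the identity $i\binom{t+i-1}{i}=t\binom{t+i-1}{i-1}$, gives $\sum_{J\colon j_i=t}\tilde\sigma(J)_i=t\cdot|\{J\colon j_i=t\}|$. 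Substituting these two facts into
\[
\sum_{t,s}(s-t)^2\,|\{J\colon j_i=t,\ \tilde\sigma(J)_i=s\}|
\]
shows the sum equals zero; nonnegativity then forces $\tilde\sigma(J)_i=j_i$ for every $J$, closing the induction. Hence $\sigma=\sigma_0$ is unique, the coefficient of $M_0$ in $\det W_{m,N}$ is $\pm1$, and $|C|=1$.

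The main obstacle will be setting up this telescoping variable $\tilde\sigma_i$ and verifying at each inductive step that both the first and second marginals align as needed for the variance identity to collapse. The divisibility step and the degree match are routine binomial bookkeeping (using Vandermonde--Chu and hockey stick), but uniqueness of $\sigma_0$ among bijections realizing $M_0$ is the delicate ingredient that forces $|C|=1$ and thus yields the asserted product formula.
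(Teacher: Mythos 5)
Your proposal is correct and follows the paper's skeleton almost exactly: divisibility of $\det W_{m,N}(\xi)$ by $(\xi_{i,j}-\xi_{i,k})^{\binom{j+i-1}{i-1}\binom{N-k+m-i}{m-i}}$ via the $\binom{j+i-1}{i-1}\binom{N-k+m-i}{m-i}$ pairs of rows that coincide under $\xi_{i,j}=\xi_{i,k}$, a block-degree count showing both sides have $\xi_i$-degree $\binom{N+m}{m+1}$ (your Vandermonde--Chu plus hockey-stick route is equivalent to the paper's identity obtained from partitioning $\Gamma_{m,N}$), and determination of the constant by showing that the monomial $g_{m,N}=\prod_{i,k}\xi_{i,k}^{\binom{k+i-1}{i}\binom{N-k+m-i}{m-i}}$ is realized by a unique bijection $\Gamma_{m,N}\to\Lambda_{m,N}$ in the Leibniz expansion. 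The one genuinely different ingredient is the uniqueness step: the paper's appendix argues greedily, running through $t=0,1,\dots,N$ and using nonnegativity together with $|\Gamma_{m,N}^{i,t}|=|\Lambda_{m,N}^{i,t}|$ to force $\sigma(\Gamma_{m,N}^{i,t})=\Lambda_{m,N}^{i,t}$ step by step, whereas you use bijectivity of the partial-sum composition $\tilde\sigma:\Gamma_{m,N}\to\Gamma_{m,N}$ to get matching column marginals and then collapse all $t$ at once through the identity $\sum_{t,s}(s-t)^2\,|\{J:\ j_i=t,\ \tilde\sigma(J)_i=s\}|=0$; both hinge on the same inductive computation $\sum_{J:\ j_i=t}\tilde\sigma(J)_i=t\,|\Gamma_{m,N}^{i,t}|$, and I checked your moment identity does close the induction, so it is a valid (and slightly slicker) substitute for the greedy argument. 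One point you should state explicitly: uniqueness alone only shows the coefficient of $g_{m,N}$ in $\det W_{m,N}$ is $0$ or $\pm1$, so you must also verify that $\sigma_0(J)=(j_1,j_2-j_1,\dots,j_m-j_{m-1})$ is a bijection onto $\Lambda_{m,N}$ and actually satisfies the marginal constraints (the paper calls this trivial but does say it); this is a one-line binomial check, e.g.\ $\sum_{J:\ j_i=k}(j_i-j_{i-1})=\bigl[i-(i-1)\bigr]\binom{k+i-1}{i}\binom{N-k+m-i}{m-i}$, and with it your argument is complete.
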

\begin{proof}
$\forall\ 1\leq i\leq m$ and $0\leq j<k\leq N$, the rows of $W_{m,N}(\xi)$ involving $\xi_{i,j}$ correspond to the set
\begin{align*}
\Ga_{m,N}^{i,j}=\{(j_1,\dots,j_m)\in\Ga_{m,N}:j_i=j\}
\end{align*}
while those rows involving $\xi_{i,k}$ correspond to the set
\begin{align}\label{Ga(m,N,i,k)}
\Ga_{m,N}^{i,k}=\{(j_1,\dots,j_m)\in\Ga_{m,N}:j_i=k\}.
\end{align}
Let
\begin{align*}
\begin{split}
\tilde{\Ga}_{m,N}^{i,j}&=\{(j_1,\dots,\hat{j_i},\dots,j_m)\in[0,N]^{m-1}\cap\Z^{m-1}:0\leq j_1\leq\dots\leq j_{i-1}\leq j\leq j_{i+1}\leq\dots\leq j_m\leq N\}, \\
\tilde{\Ga}_{m,N}^{i,k}&=\{(j_1,\dots,\hat{j_i},\dots,j_m)\in[0,N]^{m-1}\cap\Z^{m-1}:0\leq j_1\leq\dots\leq j_{i-1}\leq k\leq j_{i+1}\leq\dots\leq j_m\leq N\},
\end{split}
\end{align*}
then
\begin{align*}
\begin{split}
\abs{\Ga_{m,N}^{i,j}}&=\abs{\tilde{\Ga}_{m,N}^{i,j}}=\binom{j+i-1}{i-1}\binom{N-j+m-i}{m-i}, \\
\abs{\Ga_{m,N}^{i,k}}&=\abs{\tilde{\Ga}_{m,N}^{i,k}}=\binom{k+i-1}{i-1}\binom{N-k+m-i}{m-i}.
\end{split}
\end{align*}
Since $\forall\ 1\leq i\leq m$,
\begin{align*}
\Ga_{m,N}=\di\bigsqcup_{k=0}^{N}\Ga_{m,N}^{i,k},
\end{align*}
we thus have the equality
\begin{align}\label{eq1}
\sum_{k=0}^{N}\binom{k+i-1}{i-1}\binom{N-k+m-i}{m-i}=\binom{N+m}{m}.
\end{align}
\begin{align*}
\tilde{\Ga}_{m,N}^{i,j}\cap\tilde{\Ga}_{m,N}^{i,k}=\{(j_1,\dots,\hat{j_i},\dots,j_m)\in[0,N]^{m-1}\cap\Z^{m-1}:0\leq j_1\leq\dots\leq j_{i-1}\leq j<k\leq j_{i+1}\leq\dots\leq j_m\leq N\}
\end{align*}
and
\begin{align*}
\abs{\tilde{\Ga}_{m,N}^{i,j}\cap\tilde{\Ga}_{m,N}^{i,k}}=\binom{j+i-1}{i-1}\binom{N-k+m-i}{m-i},
\end{align*}
which means that there are $\binom{j+i-1}{i-1}\binom{N-k+m-i}{m-i}$ pairs of rows, within each pair the only difference between two rows is replacing $\xi_{i,j}$ by $\xi_{i,k}$. Therefore, $\forall\ 1\leq i\leq m$ and $\forall\ 0\leq j<k\leq N$,
\begin{align*}
(\xi_{i,j}-\xi_{i,k})^{\binom{j+i-1}{i-1}\binom{N-k+m-i}{m-i}}|\det{W_{m,N}(\xi)},
\end{align*}
\begin{align}\label{division1}
\Rightarrow G_{m,N}(\xi):=\di\prod_{i=1}^{m}\di\prod_{0\leq j<k\leq N}(\xi_{i,j}-\xi_{i,k})^{\binom{j+i-1}{i-1}\binom{N-k+m-i}{m-i}}|\det{W_{m,N}(\xi)}.
\end{align}
$\forall\ 1\leq i\leq m$,
\begin{align}\label{eq2}
\begin{split}
\deg_{\xi_i}G_{m,N}(\xi)
&=\sum_{0\leq j<k\leq N}\binom{j+i-1}{i-1}\binom{N-k+m-i}{m-i} \\
&=\sum_{k=1}^{N}\big[\sum_{j=0}^{k-1}\binom{j+i-1}{i-1}\big]\binom{N-k+m-i}{m-i} \\
&=\sum_{k=1}^{N}\binom{k-1+i}{i}\binom{N-k+m-i}{m-i} \\
&=\sum_{k-1=0}^{N-1}\binom{(k-1)+(i+1)-1}{(i+1)-1}\binom{(N-1)-(k-1)+(m+1)-(i+1)}{(m+1)-(i+1)} \\
&=\binom{(N-1)+(m+1)}{m+1} \\
&=\binom{N+m}{m+1},
\end{split}
\end{align}
where the second to the last equality is due to (\ref{eq1}). On the other hand, $\forall\ 1\leq i\leq m$ and $1\leq k\leq N$, the number of K's in $\La_{m,N}$ with $k_i=k$ is $\binom{N-k+m-1}{m-1}$, so
\begin{align*}
\begin{split}
\deg_{\xi_i}\det{W_{m,N}(\xi)}
&=\sum_{k=1}^{N}k\binom{N-k+m-1}{m-1} \\
&=\binom{N+m}{m+1},
\end{split}
\end{align*}
where the second equality is the special case $i=1$ in (\ref{eq2}).
\begin{align}\label{division2}
\Rightarrow \deg_{\xi_i}\det{W_{m,N}(\xi)}=\deg_{\xi_i}G_{m,N}(\xi),\ \forall\ 1\leq i\leq m.
\end{align}
\begin{align*}
(\ref{division1})\text{ and }(\ref{division2})\Rightarrow
\det{W_{m,N}(\xi)}=C_{m,N}G_{m,N}=C_{m,N}\di\prod_{i=1}^{m}\di\prod_{0\leq j<k\leq N}(\xi_{i,j}-\xi_{i,k})^{\binom{j+i-1}{i-1}\binom{N-k+m-i}{m-i}},
\end{align*}
where $C_{m,N}$ is a constant depending only on $m$ and $N$. Consider the monomial
\begin{align*}
g_{m,N}(\xi):=\prod_{i=1}^{m}\prod_{k=1}^{N}\xi_{i,k}^{\sum_{j=0}^{k-1}\binom{j+i-1}{i-1}\binom{N-k+m-i}{m-i}}
=\prod_{i=1}^{m}\prod_{k=1}^{N}\xi_{i,k}^{\binom{k+i-1}{i}\binom{N-k+m-i}{m-i}},
\end{align*}
then
\begin{align*}
G_{m,N}(\xi)=\pm g_{m,N}(\xi)+\dots
\end{align*}
In the appendix, we show that the coefficient of $g_{m,N}$ in the expansion of $\det{W_{m,N}(\xi)}$ equals $1$, and therefore $C_{m,N}=\pm1$.
\end{proof}

\section{Lower bound in Theorem \ref{main2}}

\begin{proof}[Proof of the lower bound in Theorem \ref{main2}]
\begin{align}\label{ineq1}
\abs{\tilde{s}_N(z)}\geq\abs{c_{(0,\dots,0)}}-\sum_{K\in\La_{m,N}\backslash\{(0,\dots,0)\}}\abs{c_K}\sqrt{\binom{N}{K}}r^{\abs{K}},\ \forall\ z=(z_1,\dots,z_m)\in(\bar{D}(0,r))^m.
\end{align}
Consider the event $\Om_{r,m,N}$:
\begin{align*}
\begin{split}
&(i)\ \abs{c_{(0,\dots,0)}}\geq\sqrt{N}, \\
&(ii)\ \abs{c_K}\leq\frac{1}{2\sqrt{N}\sqrt{\binom{N}{K}}r^{\abs{K}}\binom{\abs{K}+m-1}{m-1}},\ K\in\La_{m,N}\backslash\{(0,\dots,0)\}.
\end{split}
\end{align*}
Then if $\Om_{r,m,N}$ occurs, by (\ref{ineq1}), we have $\forall\ z=(z_1,\dots,z_m)\in(\bar{D}(0,r))^m$,
\begin{align*}
\begin{split}
\abs{\tilde{s}_N(z)}
&\geq\sqrt{N}-\sum_{K\in\La_{m,N}\backslash\{(0,\dots,0)\}}\frac{\sqrt{\binom{N}{K}}r^{\abs{K}}}{2\sqrt{N}\sqrt{\binom{N}{K}}r^{\abs{K}}\binom{\abs{K}+m-1}{m-1}} \\
&=\sqrt{N}-\sum_{K\in\La_{m,N}\backslash\{(0,\dots,0)\}}\frac{1}{2\sqrt{N}\binom{\abs{K}+m-1}{m-1}} \\
&=\sqrt{N}-\sum_{k=1}^{N}\frac{1}{2\sqrt{N}} \\
&=\half\sqrt{N}>0,
\end{split}
\end{align*}
\begin{align*}
\Rightarrow P_{0,m}(r,N)\geq \gamma_N(\Om_{r,m,N})=\gamma_N(\abs{c_{(0,\dots,0)}}\geq\sqrt{N})
\prod_{K\in\La_{m,N}\backslash\{(0,\dots,0)\}}\gamma_N\bigg(\abs{c_K}\leq\frac{1}{2\sqrt{N}\sqrt{\binom{N}{K}}r^{\abs{K}}\binom{\abs{K}+m-1}{m-1}}\bigg),
\end{align*}
where $\gamma_N(\abs{c_{(0,\dots,0)}}\geq\sqrt{N})=e^{-N}$. Since $r\geq1$, $\frac{1}{2\sqrt{N}\sqrt{\binom{N}{K}}r^{\abs{K}}\binom{\abs{K}+m-1}{m-1}}\leq1$ for $K\in\La_{m,N}\backslash\{(0,\dots,0)\}$,
\begin{align*}
\gamma_N\bigg(\abs{c_K}\leq\frac{1}{2\sqrt{N}\sqrt{\binom{N}{K}}r^{\abs{K}}\binom{\abs{K}+m-1}{m-1}}\bigg)
\geq\half\bigg[\frac{1}{2\sqrt{N}\sqrt{\binom{N}{K}}r^{\abs{K}}\binom{\abs{K}+m-1}{m-1}}\bigg]^2
=\frac{1}{8N\binom{N}{K}r^{2\abs{K}}{\binom{\abs{K}+m-1}{m-1}}^2},
\end{align*}
\begin{align*}
\log{P_{0,m}(r,N)}\geq
-N-\sum_{K\in\La_{m,N}\backslash\{(0,\dots,0)\}}\Big\{\log{8}+\log{N}+2\log{\binom{\abs{K}+m-1}{m-1}}+\log{\big[\binom{N}{K}r^{2\abs{K}}\big]}\Big\},
\end{align*}
where
\begin{align*}
\log{\binom{\abs{K}+m-1}{m-1}}\leq\log{\binom{N+m-1}{m-1}}=O(\log{N}),
\end{align*}
\begin{align*}
\Rightarrow \sum_{K\in\La_{m,N}\backslash\{(0,\dots,0)\}}\Big[\log{8}+\log{N}+2\log{\binom{\abs{K}+m-1}{m-1}}\Big]=\binom{N+m}{m}O(\log{N})=o(N^{m+1}),
\end{align*}
\begin{align*}
\begin{split}
\Rightarrow \log{P_{0,m}(r,N)}
&\geq-\sum_{K\in\La_{m,N}\backslash\{(0,\dots,0)\}}\log{\big[\binom{N}{K}r^{2\abs{K}}\big]}+o(N^{m+1}) \\
&=-Q_{r,m}(N)+o(N^{m+1})=-N^{m+1}\int_{\Si_m}E_r(x)\ d_mx+o(N^{m+1}).
\end{split}
\end{align*}
\end{proof}

\section{Upper bound in Theorem \ref{main2}}

Let $\de>0$ be small, $\kappa=1-\sqrt{\de}$. We shall first treat $\de$ as a small positive constant and at the end we will let $\de\to0+$. For the sake of clarity, all the constants $C$, capital $O$ and little $o$ terms listed throughout this paper will not depend on $\de$ unless stated.
\begin{definition}
$z_j(N):=\kappa re^{2\pi\sqrt{-1}\frac{j}{N+1}}$, for $0\leq j\leq N$.
\end{definition}

$\forall\ p\in\Z^+$, assume $N+1=q(N)p+l(N)$, where $q(N)\in\Z$, $q(N)\geq0$ and $0\leq l(N)<p$. For convenience, we drop the dependence of $N$ when there is no confusion. $\forall\ 1\leq i\leq m$, assign the values of $\xi_i=(\xi_{i,0},\dots,\xi_{i,N})$ by means of the table below:
\begin{align}\label{table}
\begin{array}{|l|l|l|l|}
\hline
\xi_{i,0}=z_0 & \cdots & \xi_{i,(q-1)p}=z_{q-1} & \xi_{i,qp}=z_q \\
\hline
\xi_{i,1}=z_{q+1} & \cdots & \xi_{i,(q-1)p+1}=z_{(q+1)+(q-1)} & \xi_{i,qp+1}=z_{(q+1)+q} \\
\hline
\cdots\cdots\cdots\cdots\cdots\cdots\cdots\cdots\cdots & \cdots & \cdots\cdots\cdots\cdots\cdots\cdots\cdots\cdots\cdots\cdots\cdots\cdots\cdots & \cdots\cdots\cdots\cdots\cdots\cdots\cdots\cdots\cdots \\
\hline
\xi_{i,l-1}=z_{(l-1)(q+1)} & \cdots & \xi_{i,(q-1)p+(l-1)}=z_{(l-1)(q+1)+(q-1)} & \xi_{i,qp+(l-1)}=z_{(l-1)(q+1)+q} \\
\hline
\xi_{i,l}=z_{l(q+1)} & \cdots & \xi_{i,(q-1)p+l}=z_{l(q+1)+(q-1)} & \\
\hline
\cdots\cdots\cdots\cdots\cdots\cdots\cdots\cdots\cdots & \cdots & \cdots\cdots\cdots\cdots\cdots\cdots\cdots\cdots\cdots\cdots\cdots\cdots\cdots & \\
\hline
\xi_{i,p-1}=z_{l(q+1)+(p-1-l)q} & \cdots & \xi_{i,(q-1)p+(p-1)}=z_{l(q+1)+(p-1-l)q+(q-1)} & \\
\hline
\end{array}
\end{align}


Intuitively, table (\ref{table}) gives a way to choose points $\xi_{i,j}(j=0,1,\dots)$ one after another on the circle of radius $\kappa r$ that the arguments of each two consecutive points differ approximately by $\frac{2\pi}{p}$. Denote the bijection of $N+1$ letters $\{0,\dots,N\}$ indicated in table (\ref{table}) by $\tau$, i.e. $z_j=\xi_{i,\tau(j)}$ for $0\leq j\leq N$ and $1\leq i\leq m$. Denote
\begin{align*}
\begin{split}
&I_0=\{0,\dots,q\},\ a_0=0, \\
&I_1=\{q+1,\dots,(q+1)+q\},\ a_1=q+1, \\
&\dots \\
&I_{l-1}=\{(l-1)(q+1),\dots,(l-1)(q+1)+q\},\ a_{l-1}=(l-1)(q+1), \\
&I_l=\{l(q+1),\dots,l(q+1)+(q-1)\},\ a_l=l(q+1), \\
&\dots \\
&I_{p-1}=\{l(q+1)+(p-1-l)q,\dots,l(q+1)+(p-1-l)q+(q-1)\},\ a_{p-1}=l(q+1)+(p-1-l)q.
\end{split}
\end{align*}
$I_0,\dots,I_{p-1}$ give a partition of $\{0,\dots,N\}$. Again there is an implicit dependence on $N$ for each term defined above, and we would show this dependence explicitly when necessary. Then
\begin{align*}
a_t=tq+\min\{t,l\}=
\begin{cases}
t(q+1) & \text{ when } j\in I_t,\ 0\leq t\leq l, \\
l(q+1)+(t-l)q & \text{ when } j\in I_t,\ l+1\leq t\leq p-1,
\end{cases}
\end{align*}
\begin{align*}
\tau(j)=(j-a_t)p+t=
\begin{cases}
[j-t(q+1)]p+t & \text{ when } j\in I_t,\ 0\leq t\leq l, \\
[j-l(q+1)-(t-l)q]p+t & \text{ when } j\in I_t,\ l+1\leq t\leq p-1,
\end{cases}
\end{align*}
and if $\{j(N)\}_{N=1}^{\infty}$ is a sequence satisfying $j(N)\in I_t(N)$, $\forall\ N\geq1$,
\begin{align*}
\Abs{\tau_N(j(N))-pj(N)+t(N+1)}\leq2p^2,
\end{align*}
\begin{align}\label{tau}
\Rightarrow\frac{\tau_N(j(N))}{N+1}-\big(p\frac{j(N)}{N+1}-t\big)=O(N^{-1}).
\end{align}
\begin{lem}\label{lem(detW1)}
With the assignment of the values of $\xi_i$ given in (\ref{table}),
\begin{align*}
\log\abs{\det{W_{m,N}(\xi)}}=m\binom{N+m}{m+1}\log{(\kappa r)}+\frac{\be_m}{p}N^{m+1}+o(N^{m+1}),
\end{align*}
where $\be_m=\frac{1}{(m-1)!}\di\int_0^1x^m\log[2\sin(\pi x)]\ dx$, which is finite for each $m\geq1$ by comparison test of improper integrals.
\end{lem}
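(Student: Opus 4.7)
The strategy is to apply Lemma \ref{lem(detW)} with $\xi_{i,j} = \kappa r\, e^{2\pi\sqrt{-1}\,\sigma(j)/(N+1)}$, where $\sigma := \tau^{-1}$, so that
\begin{align*}
|\xi_{i,j} - \xi_{i,k}| = 2\kappa r\,\Bigl|\sin\tfrac{\pi(\sigma(j)-\sigma(k))}{N+1}\Bigr|.
\end{align*}
Taking logarithms, the $\log(\kappa r)$ term is multiplied, for each index $i$, by $\sum_{0\le j<k\le N}\binom{j+i-1}{i-1}\binom{N-k+m-i}{m-i} = \binom{N+m}{m+1}$ (identity (\ref{eq2})), giving the leading $m\binom{N+m}{m+1}\log(\kappa r)$ after summing over $i=1,\ldots,m$. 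It remains to prove
\begin{align*}
\sum_{i=1}^m T_i = \frac{\beta_m}{p}N^{m+1} + o(N^{m+1}),\quad T_i := \sum_{0\le j<k\le N}\binom{j+i-1}{i-1}\binom{N-k+m-i}{m-i}\log\Bigl|2\sin\tfrac{\pi(\sigma(j)-\sigma(k))}{N+1}\Bigr|.
\end{align*}

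The natural next move is to reindex by $n=\sigma(j), n'=\sigma(k)$: the sine's argument becomes the clean $\pi(n-n')/(N+1)$, at the cost of replacing the weights by $\binom{\tau(n)+i-1}{i-1}$ and $\binom{N-\tau(n')+m-i}{m-i}$ and the ordering constraint by $\tau(n)<\tau(n')$. I would then approximate each binomial by its leading monomial term, $\tau(n)^{i-1}/(i-1)!$ and $(N-\tau(n'))^{m-i}/(m-i)!$ respectively, and invoke (\ref{tau}) in the form $\tau(n)/(N+1) = \{p\cdot n/(N+1)\} + O(N^{-1})$, with $\{\cdot\}$ the fractional part. Partitioning $\{0,\dots,N\}$ into the $p$ blocks indexed by $s:=\lfloor p\cdot n/(N+1)\rfloor$ and, within block $s$, setting $x:=n/(N+1)$ and $u:=px-s\in[0,1]$ (analogously $s', x', v$), a Riemann-sum argument then produces
\begin{align*}
\sum_{i=1}^m T_i = \frac{N^{m+1}}{p^2(m-1)!}\sum_{s,s'=0}^{p-1}\int_{0<u<v<1}(1-(v-u))^{m-1}\log\Bigl|2\sin\tfrac{\pi(s-s'+u-v)}{p}\Bigr|\,du\,dv + o(N^{m+1}),
\end{align*}
after collapsing the $i$-sum via the binomial identity $\sum_{i=1}^m \frac{u^{i-1}(1-v)^{m-i}}{(i-1)!(m-i)!} = \frac{(1-(v-u))^{m-1}}{(m-1)!}$; the $p^{-2}$ is the Jacobian of $dx\,dx' = du\,dv/p^2$.

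Finally, I would evaluate the integral. Substituting $a=v-u$ and performing the trivial $u$-integration over $(0,1-a)$ reduces the inner integral to $\int_0^1(1-a)^m\log|2\sin(\pi(s-s'-a)/p)|\,da$. The classical product formula $\prod_{s=0}^{p-1}2\sin(\pi(y+s/p))=\pm 2\sin(\pi p y)$, applied at $y=(-s'-a)/p$, collapses the $s$-sum to $\log(2\sin(\pi a))$ independently of $s'$; summing over $s'$ then gives a factor of $p$, and the substitution $x=1-a$ turns the remaining integral into $\int_0^1 x^m\log(2\sin(\pi x))\,dx=(m-1)!\,\beta_m$. Combining with the prefactor $N^{m+1}\cdot p/(p^2(m-1)!)$ yields exactly $(\beta_m/p)N^{m+1}$. \emph{The main obstacle} is making the Riemann-sum approximation uniform in spite of the logarithmic singularity at $n=n'$: this requires splitting off a near-diagonal band $|n-n'|\le\varepsilon N$ and bounding its contribution by $O(\varepsilon\log(1/\varepsilon)\,N^{m+1})$ (via the estimate $|\log|2\sin(\pi l/(N+1))||\lesssim |\log(l/N)|$ together with the $O(N^{m-1})$ bound on each binomial weight), then letting $\varepsilon\to 0^+$ at the end; similar care is needed to absorb the $O(N^{-1})$ error from (\ref{tau}) inside the logarithm away from the diagonal.
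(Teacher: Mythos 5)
Your proposal is correct and follows essentially the same route as the paper: extract the $m\binom{N+m}{m+1}\log(\kappa r)$ term via the identity (\ref{eq2}), reindex through $\tau$, replace the binomial weights by their leading monomials using (\ref{tau}), pass from the block-structured Riemann sum to a double integral over block pairs, collapse the block sums with the sine (roots-of-unity) product formula and the $i$-sum with the binomial theorem, and reduce to $\int_0^1 x^m\log[2\sin(\pi x)]\,dx=(m-1)!\,\be_m$, giving $\frac{\be_m}{p}N^{m+1}$. The only differences are cosmetic (order of the collapses, and handling the logarithmic singularity by an $\ep N$-band with $\ep\to0$ rather than the paper's $N^{-1/2}$-scale splitting), so no gap to report.
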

\begin{proof}
By Lemma \ref{lem(detW)},
\begin{align*}
\begin{split}
\log\abs{\det{W_{m,N}(\xi)}}
=&\log\big[\di\prod_{i=1}^{m}\di\prod_{0\leq j<k\leq N}\abs{\xi_{i,j}-\xi_{i,k}}^{\binom{j+i-1}{i-1}\binom{N-k+m-i}{m-i}}\big] \\
=&\di\sum_{i=1}^{m}\di\sum_{0\leq j<k\leq N}\binom{j+i-1}{i-1}\binom{N-k+m-i}{m-i}\log{\Abs{\frac{\xi_{i,j}}{\kappa r}-\frac{\xi_{i,k}}{\kappa r}}} \\
&+\di\sum_{i=1}^{m}\di\sum_{0\leq j<k\leq N}\binom{j+i-1}{i-1}\binom{N-k+m-i}{m-i}\log(\kappa r) \\
=&\di\sum_{i=1}^{m}\di\sum_{0\leq \tau(j)<\tau(k)\leq N}\binom{\tau(j)+i-1}{i-1}\binom{N-\tau(k)+m-i}{m-i}\log{\Abs{\frac{\xi_{i,\tau(j)}}{\kappa r}-\frac{\xi_{i,\tau(k)}}{\kappa r}}} \\
&+m\binom{N+m}{m+1}\log(\kappa r) \\
=&\di\sum_{i=1}^{m}\di\sum_{0\leq \tau(j)<\tau(k)\leq N}
\binom{\tau(j)+i-1}{i-1}\binom{N-\tau(k)+m-i}{m-i}\log{\abs{e^{2\pi\sqrt{-1}\frac{j}{N+1}}-e^{2\pi\sqrt{-1}\frac{k}{N+1}}}} \\
&+m\binom{N+m}{m+1}\log(\kappa r)
\end{split}
\end{align*}
where the second part of the third equality is due to (\ref{eq2}). We are going to show that the sum in the last equality can be approximated by a double integral.
\begin{align}\label{detW0}
\begin{split}
&\di\sum_{i=1}^{m}\di\sum_{0\leq \tau(j)<\tau(k)\leq N}
\binom{\tau(j)+i-1}{i-1}\binom{N-\tau(k)+m-i}{m-i}\log{\abs{e^{2\pi\sqrt{-1}\frac{j}{N+1}}-e^{2\pi\sqrt{-1}\frac{k}{N+1}}}} \\
=&\di\sum_{i=1}^{m}\di\sum_{0\leq \tau(j)<\tau(k)\leq N}
\big[\frac{(\tau(j))^{i-1}}{(i-1)!}+o((\tau(j))^{i-1})\big]\big[\frac{(N-\tau(k))^{m-i}}{(m-i)!}+o((N-\tau(k))^{m-i})\big]
\log{\abs{1-e^{2\pi\sqrt{-1}(\frac{j}{N+1}-\frac{k}{N+1})}}},
\end{split}
\end{align}
$\forall\ 1\leq i\leq m, 0\leq u,v\leq p-1$, denote
\begin{align*}
L_{u,v,N}=\{(j,k)\in I_u\times I_v:\ \tau(j)<\tau(k)\},
\end{align*}
\begin{align*}
T_{u,v}(N)=\bigcup_{(j,k)\in L_{u,v,N}}[\frac{j}{N+1},\frac{j+1}{N+1}]\times[\frac{k}{N+1},\frac{k+1}{N+1}],
\end{align*}
\begin{align*}
\mathring{L}_{u,v,N}=\{(j,k)\in L_{u,v,N}:\ j-k\neq\pm N\text{ and }j-k\neq\pm1\}\subset L_{u,v,N},
\end{align*}
\begin{align*}
\mathring{T}_{u,v}(N)=\bigcup_{(j,k)\in \mathring{L}_{u,v,N}}[\frac{j}{N+1},\frac{j+1}{N+1}]\times[\frac{k}{N+1},\frac{k+1}{N+1}]\subset T_{u,v}(N),
\end{align*}
and a function defined over $\{(x,y)\in(0,1)\times(0,1):\ x\neq y\}$:
\begin{align*}
g^{i}_{u,v}(x,y)=(px-u)^{i-1}[1-(py-v)]^{m-i}\log{\abs{1-e^{2\pi\sqrt{-1}(x-y)}}}.
\end{align*}
Then
\begin{align}\label{detW1}
\Abs{L_{u,v,N}\setminus \mathring{L}_{u,v,N}}\leq2N+2,
\end{align}
\begin{align}\label{detW2}
\vol_{\R^2}(T_{u,v}(N)\setminus\mathring T_{u,v}(N))\leq O(N^{-1}),
\end{align}
\begin{align}\label{detW3}
\frac{1}{N+1}\leq\Abs{\frac{j-k}{N+1}}\leq\frac{N}{N+1}\text{ for }(j,k)\in L_{u,v,N},
\end{align}
\begin{align}\label{detW4}
\frac{1}{N+1}\leq\Abs{x-y}\leq\frac{N}{N+1}\text{ for }(x,y)\in \mathring{T}_{u,v}(N),
\end{align}
\begin{align}\label{detW5}
\abs{g^i_{u,v}(x,y)}\leq O(\log{N})\text{ if }\frac{1}{N+1}\leq\abs{x-y}\leq\frac{N}{N+1},
\end{align}
\begin{align}\label{detW6}
\norm{\nabla g^i_{u,v}(x,y)}\leq O(N^{\half})\text{ if }\frac{1}{\sqrt{N+1}}\leq\abs{x-y}\leq1-\frac{1}{\sqrt{N+1}}.
\end{align}
From (\ref{tau}), we have
\begin{align}\label{detW7}
\begin{split}
&\di\sum_{0\leq \tau(j)<\tau(k)\leq N}(\tau(j))^{i-1}(N-\tau(k))^{m-i}\log{\abs{1-e^{2\pi\sqrt{-1}(\frac{j}{N+1}-\frac{k}{N+1})}}} \\
=&(N+1)^{m-1} \\
&\times\di\sum_{0\leq u,v\leq p-1}\di\sum_{(j,k)\in L_{u,v,N}}[p\frac{j}{N+1}-u+O(N^{-1})]^{i-1}[1-(p\frac{k}{N+1}-v)+O(N^{-1})]^{m-i}\log{\abs{1-e^{2\pi\sqrt{-1}(\frac{j}{N+1}-\frac{k}{N+1})}}},
\end{split}
\end{align}
$\forall\ 0\leq u,v\leq p-1$, by (\ref{detW1}), (\ref{detW3}) and (\ref{detW5}),
\begin{align}\label{detW8}
\begin{split}
&\di\sum_{(j,k)\in L_{u,v,N}}(p\frac{j}{N+1}-u)^{i-1}[1-(p\frac{k}{N+1}-v)]^{m-i}\log{\abs{1-e^{2\pi\sqrt{-1}(\frac{j}{N+1}-\frac{k}{N+1})}}} \\
=&\di\sum_{(j,k)\in L_{u,v,N}}g^i_{u,v}(\frac{j}{N+1},\frac{k}{N+1}) \\
=&\di\sum_{(j,k)\in\mathring L_{u,v,N}}g^i_{u,v}(\frac{j}{N+1},\frac{k}{N+1})+O(N\log{N}),
\end{split}
\end{align}
\begin{align}\label{detW9}
\begin{split}
&\Abs{(N+1)^{-2}\di\sum_{(j,k)\in\mathring{L}_{u,v,N}}g^i_{u,v}(\frac{j}{N+1},\frac{k}{N+1})-\di\iint_{\mathring{T}_{u,v}(N)}g^i_{u,v}(x,y)\ dxdy} \\
\leq&\di\sum_{(j,k)\in\mathring{L}_{u,v,N}}\iint_{[\frac{j}{N+1},\frac{j+1}{N+1}]\times[\frac{k}{N+1},\frac{k+1}{N+1}]}\Abs{g^i_{u,v}(x,y)-g^i_{u,v}(\frac{j}{N+1},\frac{k}{N+1})}\ dxdy \\
=&\di\sum_{(j,k)\in\mathring{L}_{u,v,N}:\frac{1}{\sqrt{N+1}}\leq\abs{\frac{j-k}{N+1}}\leq1-\frac{1}{\sqrt{N+1}}}\iint_{[\frac{j}{N+1},\frac{j+1}{N+1}]\times[\frac{k}{N+1},\frac{k+1}{N+1}]}\Abs{g^i_{u,v}(x,y)-g^i_{u,v}(\frac{j}{N+1},\frac{k}{N+1})}\ dxdy \\
&+\di\sum_{(j,k)\in\mathring{L}_{u,v,N}:\abs{\frac{j-k}{N+1}}<\frac{1}{\sqrt{N+1}}\text{ or }\abs{\frac{j-k}{N+1}}>1-\frac{1}{\sqrt{N+1}}}\iint_{[\frac{j}{N+1},\frac{j+1}{N+1}]\times[\frac{k}{N+1},\frac{k+1}{N+1}]}\Abs{g^i_{u,v}(x,y)-g^i_{u,v}(\frac{j}{N+1},\frac{k}{N+1})}\ dxdy.
\end{split}
\end{align}
Since
\begin{align*}
\Abs{(j,k)\in\mathring{L}_{u,v,N}:\frac{1}{\sqrt{N+1}}\leq\abs{\frac{j-k}{N+1}}\leq1-\frac{1}{\sqrt{N+1}}}\leq\Abs{\mathring L_{u,v,N}}=O(N^2),
\end{align*}
\begin{align*}
\Abs{(j,k)\in\mathring{L}_{u,v,N}:\abs{\frac{j-k}{N+1}}<\frac{1}{\sqrt{N+1}}\text{ or }\abs{\frac{j-k}{N+1}}>1-\frac{1}{\sqrt{N+1}}}\leq O(N^{\frac{3}{2}}),
\end{align*}
\begin{align}\label{detW10}
\begin{split}
(\ref{detW6})\ \Rightarrow\ &\di\sum_{(j,k)\in\mathring{L}_{u,v,N}:\frac{1}{\sqrt{N+1}}\leq\abs{\frac{j-k}{N+1}}\leq1-\frac{1}{\sqrt{N+1}}}\iint_{[\frac{j}{N+1},\frac{j+1}{N+1}]\times[\frac{k}{N+1},\frac{k+1}{N+1}]}\Abs{g^i_{u,v}(x,y)-g^i_{u,v}(\frac{j}{N+1},\frac{k}{N+1})}\ dxdy \\
\leq&O(N^2)\times(N+1)^{-2}\times\frac{\sqrt{2}}{N+1}\times\di\sup_{\frac{1}{\sqrt{N+1}}\leq\abs{x-y}\leq1-\frac{1}{\sqrt{N+1}}}\norm{\nabla g^i_{u,v}(x,y)} \\
\leq&O(N^{-\half}),
\end{split}
\end{align}
and by (\ref{detW4}), (\ref{detW5}),
\begin{align}\label{detW11}
\begin{split}
&\di\sum_{(j,k)\in\mathring{L}_{u,v,N}:\abs{\frac{j-k}{N+1}}<\frac{1}{\sqrt{N+1}}\text{ or }\abs{\frac{j-k}{N+1}}>1-\frac{1}{\sqrt{N+1}}}\iint_{[\frac{j}{N+1},\frac{j+1}{N+1}]\times[\frac{k}{N+1},\frac{k+1}{N+1}]}\Abs{g^i_{u,v}(x,y)-g^i_{u,v}(\frac{j}{N+1},\frac{k}{N+1})}\ dxdy \\
\leq&O(N^{\frac{3}{2}})\times(N+1)^{-2}\times O(\log{N}) \\
=&O(N^{-\half}\log{N}).
\end{split}
\end{align}
Denote $T_{u,v}=\{(x,y)\in\R^2:\ 0\leq x-\frac{u}{p}\leq y-\frac{v}{p}\leq\frac{1}{p}\}$. Since $g^i_{u,v}$ is $L^1_{loc}$, the measure $g^i_{u,v}(x,y)\ dxdy$ is absolutely continuous with respect to the Lebesgue measure. Thus by lemma \ref{lem(detW2)} below, we have
\begin{align}\label{detW12}
\di\iint_{\mathring{T}_{u,v}(N)}g^i_{u,v}(x,y)\ dxdy-\di\iint_{T_{u,v}}g^i_{u,v}(x,y)\ dxdy=o(1)\text{ as }N\to\infty.
\end{align}
\begin{align}\label{detW13}
\begin{split}
(\ref{detW8})\sim(\ref{detW12})\ \Rightarrow\ &
\di\sum_{(j,k)\in L_{u,v,N}}(p\frac{j}{N+1}-u)^{i-1}[1-(p\frac{k}{N+1}-v)]^{m-i}\log{\abs{1-e^{2\pi\sqrt{-1}(\frac{j}{N+1}-\frac{k}{N+1})}}} \\
=&(N+1)^2\di\iint_{T_{u,v}}g^i_{u,v}(x,y)\ dxdy+o(N^2).
\end{split}
\end{align}
\begin{align}\label{detW14}
\begin{split}
(\ref{detW13})+(\ref{detW7})\ \Rightarrow\ &\di\sum_{0\leq \tau(j)<\tau(k)\leq N}(\tau(j))^{i-1}(N-\tau(k))^{m-i}\log{\abs{1-e^{2\pi\sqrt{-1}(\frac{j}{N+1}-\frac{k}{N+1})}}} \\
=&(N+1)^{m+1}\di\sum_{0\leq u,v\leq p-1}\iint_{T_{u,v}}g^i_{u,v}(x,y)\ dxdy+o(N^{m+1}),
\end{split}
\end{align}
\begin{align*}
\begin{split}
(\ref{detW14})+(\ref{detW0})\ \Rightarrow\ &\di\sum_{i=1}^{m}\di\sum_{0\leq \tau(j)<\tau(k)\leq N}
\binom{\tau(j)+i-1}{i-1}\binom{N-\tau(k)+m-i}{m-i}\log{\abs{e^{2\pi\sqrt{-1}\frac{j}{N+1}}-e^{2\pi\sqrt{-1}\frac{k}{N+1}}}} \\
=&\di\sum_{i=1}^m\di\sum_{0\leq u,v\leq p-1}\iint_{T_{u,v}}\frac{g^i_{u,v}(x,y)}{(i-1)!(m-i)!}\ dxdy+o(N^{m+1}) \\
=&\di\sum_{i=1}^{m}\di\sum_{0\leq u,v\leq p-1}
\di\iint_{T_{u,v}}\frac{[p(x-\frac{u}{p})]^{i-1}}{(i-1)!}\frac{[1-p(y-\frac{v}{p})]^{m-i}}{(m-i)!}\log{\abs{1-e^{2\pi\sqrt{-1}(x-y)}}}\ dxdy+o(N^{m+1}) \\
=&\di\sum_{i=1}^{m}\di\sum_{0\leq u,v\leq p-1}
\di\iint_{T_{0,0}}\frac{(px)^{i-1}}{(i-1)!}\frac{(1-py)^{m-i}}{(m-i)!}\log{\abs{1-e^{2\pi\sqrt{-1}(x-y+\frac{u}{p}-\frac{v}{p})}}}\ dxdy+o(N^{m+1}) \\
=&\di\sum_{i=1}^{m}\di\sum_{0\leq u\leq p-1}
\di\iint_{T_{0,0}}\frac{(px)^{i-1}}{(i-1)!}\frac{(1-py)^{m-i}}{(m-i)!}
\log\big[\di\prod_{v=0}^{p-1}{\abs{e^{2\pi\sqrt{-1}\frac{v}{p}}-e^{2\pi\sqrt{-1}(x-y+\frac{u}{p})}}}\big]\ dxdy+o(N^{m+1}) \\
=&p\di\sum_{i=1}^{m}\di\iint_{T_{0,0}}\frac{(px)^{i-1}}{(i-1)!}\frac{(1-py)^{m-i}}{(m-i)!}\log{\abs{1-e^{2\pi\sqrt{-1}(px-py)}}}\ dxdy+o(N^{m+1}) \\
=&\frac{1}{p}\di\iint_T\di\sum_{i=1}^{m}\frac{x^{i-1}}{(i-1)!}\frac{(1-y)^{m-i}}{(m-i)!}\log{\abs{1-e^{2\pi\sqrt{-1}(x-y)}}}\ dxdy+o(N^{m+1}) \\
=&\frac{1}{p(m-1)!}\di\iint_T(1+x-y)^{m-1}\log{\abs{1-e^{2\pi\sqrt{-1}(x-y)}}}\ dxdy+o(N^{m+1}),
\end{split}
\end{align*}
where $T=\{(x,y)\in\R^2:\ 0\leq x\leq y\leq1\}$. Make change of variables: $\tilde{x}=x-y,\ \tilde{y}=y$, then $T$ is mapped to $\tilde{T}=\{(\tilde{x},\tilde{y})\in\R^2:\ -1\leq\tilde{x}\leq0,\ -\tilde{x}\leq\tilde{y}\leq1\}$.
\begin{align*}
\begin{split}
&\frac{1}{(m-1)!}\di\iint_T(1+x-y)^{m-1}\log{\abs{1-e^{2\pi\sqrt{-1}(x-y)}}}\ dxdy \\
=&\frac{1}{(m-1)!}\di\iint_{\tilde{T}}(1+\tilde{x})^{m-1}\log{\abs{1-e^{2\pi\sqrt{-1}\tilde{x}}}}\ d\tilde{x}d\tilde{y} \\
=&\frac{1}{(m-1)!}\di\int_{-1}^{0}(1+\tilde{x})^m\log{\abs{1-e^{2\pi\sqrt{-1}\tilde{x}}}}\ d\tilde{x} \\
=&\frac{1}{(m-1)!}\di\int_0^1x^m\log{\abs{1-e^{2\pi\sqrt{-1}x}}}\ dx \\
=&\frac{1}{(m-1)!}\di\int_0^1x^m\log[2\sin(\pi x)]\ dx \\
=&\be_m,
\end{split}
\end{align*}
\begin{align*}
\Rightarrow\di\sum_{i=1}^{m}\di\sum_{0\leq \tau(j)<\tau(k)\leq N}
\binom{\tau(j)+i-1}{i-1}\binom{N-\tau(k)+m-i}{m-i}\log{\abs{e^{2\pi\sqrt{-1}\frac{j}{N+1}}-e^{2\pi\sqrt{-1}\frac{k}{N+1}}}}
=\frac{\be_m}{p}N^{m+1}+o(N^{m+1}),
\end{align*}
\begin{align*}
\Rightarrow \log\abs{\det{W_{m,N}(\xi)}}=m\binom{N+m}{m+1}\log{(\kappa r)}+\frac{\be_m}{p}N^{m+1}+o(N^{m+1}).
\end{align*}
\end{proof}
\begin{lem}\label{lem(detW2)}
$\di\lim_{N\to\infty}\vol_{\R^2}(T_{u,v}\triangle\mathring T_{u,v}(N))=0$ for any $0\leq u,v\leq p-1$.
\end{lem}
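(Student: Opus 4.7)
The plan is to first exploit the symmetric-difference inequality
\[
\vol_{\R^2}(T_{u,v}\triangle\mathring T_{u,v}(N)) \le \vol_{\R^2}(T_{u,v}\triangle T_{u,v}(N)) + \vol_{\R^2}(T_{u,v}(N)\setminus \mathring T_{u,v}(N)),
\]
and apply the already-established bound (\ref{detW2}) to the second term. This reduces the lemma to proving $\vol_{\R^2}(T_{u,v}\triangle T_{u,v}(N))\to 0$, which I aim to strengthen to $O(N^{-1})$.

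The key step is to translate the discrete selection criterion $\tau(j)<\tau(k)$ defining $L_{u,v,N}$ into a continuous condition on $(x,y)=(j/(N+1),k/(N+1))$. Using the explicit formula $\tau(j)=(j-a_u)p+u$ for $j\in I_u$ and $\tau(k)=(k-a_v)p+v$ for $k\in I_v$, the inequality $\tau(j)<\tau(k)$ is equivalent to
\[
\Bigl(y-\tfrac{a_v}{N+1}\Bigr)-\Bigl(x-\tfrac{a_u}{N+1}\Bigr) > \tfrac{u-v}{p(N+1)}.
\]
A direct calculation from $a_t=tq+\min\{t,l\}$ and $N+1=qp+l$, $0\le l<p$, yields $a_t/(N+1)=t/p+O(N^{-1})$ uniformly in $0\le t\le p-1$, and similarly $(a_t+|I_t|)/(N+1)=(t+1)/p+O(N^{-1})$. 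Substituting, the criterion becomes $(y-v/p)-(x-u/p)>O(N^{-1})$, which is the continuous inequality defining the diagonal face of $T_{u,v}$, shifted by $O(N^{-1})$.

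From here the measure estimate is straightforward. The symmetric difference $T_{u,v}\triangle T_{u,v}(N)$ is covered by two regions: (a) an $O(N^{-1})$-neighborhood of the line $\{y-v/p=x-u/p\}$ inside the unit square, of area $O(N^{-1})$; and (b) the symmetric difference between the rectangle swept out by $I_u\times I_v$, namely $[a_u/(N+1),(a_u+|I_u|)/(N+1)]\times[a_v/(N+1),(a_v+|I_v|)/(N+1)]$, and the limit rectangle $[u/p,(u+1)/p]\times[v/p,(v+1)/p]$, a union of at most four thin strips of width $O(N^{-1})$. Summing yields $\vol_{\R^2}(T_{u,v}\triangle T_{u,v}(N))=O(N^{-1})$, and combined with (\ref{detW2}) this proves the lemma.

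The only real bookkeeping hazard is checking the bound $a_t/(N+1)=t/p+O(N^{-1})$ with a constant uniform in $N$ across the two cases $t\le l$ and $t>l$ of the piecewise definition of $a_t$; both cases reduce to the trivial estimate $|ul-up|\le (p-1)p$ in the numerator after clearing denominators, so uniformity is immediate.
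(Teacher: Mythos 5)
Your proof is correct, and after the identical opening reduction (using (\ref{detW2}) to pass from $\mathring T_{u,v}(N)$ to $T_{u,v}(N)$) it takes a genuinely different and more quantitative route than the paper. The paper argues qualitatively: it establishes the sandwich $\mathring T_{u,v}\subset\liminf_{N}T_{u,v}(N)\subset\limsup_{N}T_{u,v}(N)\subset T_{u,v}$ by chasing sequences of lattice points through the bijection $\tau$, and then concludes the measure of the symmetric difference tends to zero because $\partial T_{u,v}$ is Lebesgue-null (a dominated-convergence argument). You instead unwind the discrete selection rule $\tau(j)<\tau(k)$ into the explicit affine inequality $(y-a_v/(N+1))-(x-a_u/(N+1))>(u-v)/(p(N+1))$, show $a_t/(N+1)=t/p+O(N^{-1})$ with a constant uniform in $t$ (your reduction $|\min\{t,l\}-tl/p|\le p$ is exactly right), and then cover $T_{u,v}\triangle T_{u,v}(N)$ by an $O(N^{-1})$-width diagonal strip plus four $O(N^{-1})$-width rectangular strips from the mismatch $R(N)\triangle R$, yielding the stronger conclusion $\vol_{\R^2}(T_{u,v}\triangle\mathring T_{u,v}(N))=O(N^{-1})$. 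Your approach costs a little more bookkeeping with the staircase boundary of $T_{u,v}(N)$, but it buys an explicit rate of convergence (not needed downstream, where only $o(1)$ is invoked in (\ref{detW12}), but still a strict improvement); the paper's approach is softer and avoids tracking constants, at the price of delivering no rate.
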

\begin{proof}
By (\ref{detW2}), it is equivalent to show that $\di\lim_{N\to\infty}\vol_{\R^2}(T_{u,v}\triangle T_{u,v}(N))=0$, which is a direct consequence of $\di\lim_{N\to\infty}T_{u,v}(N)\setminus\partial T_{u,v}=\mathring T_{u,v}$. \\

First let's show $\di\limsup_{N\to\infty}T_{u,v}(N)\subset T_{u,v}$. $\forall(x,y)\in\di\limsup_{N\to\infty}T_{u,v}(N)$, $\exists\ \{N_n\}_{n=1}^{\infty}\to\infty$ such that $\forall\ n\geq1$, $\exists\ (j(N_n),k(N_n))\in I_u(N_n)\times I_v(N_n)$, $\tau_{N_n}(j(N_n))<\tau_{N_n}(k(N_n))$ and $(x,y)\in\big[\frac{j(N_n)}{N_n+1},\frac{j(N_n)+1}{N_n+1}\big]\times\big[\frac{k(N_n)}{N_n+1},\frac{k(N_n)}{N_n+1}\big]$. Then $\di\lim_{n\to\infty}\frac{j(N_n)}{N_n+1}=x$, $\di\lim_{n\to\infty}\frac{k(N_n)}{N_n+1}=y$. Since $0\leq\frac{\tau_{N_n}(j(N_n))}{N_n+1}<\frac{\tau_{N_n}(k(N_n))}{N_n+1}\leq\frac{N_n}{N_n+1}$ and $(j(N_n),k(N_n))\in I_u(N_n)\times I_v(N_n)$, (\ref{tau}) implies that $0\leq p\di\lim_{n\to\infty}\frac{j(N_n)}{N_n+1}-u\leq p\di\lim_{n\to\infty}\frac{k(N_n)}{N_n+1}-v\leq1$. Hence $0\leq px-u\leq py-v\leq1$ and $(x,y)\in T_{u,v}$. \\

Next we will prove $\mathring T_{u,v}\subset\di\liminf_{N\to\infty}T_{u,v}(N)$. $\forall(x,y)\in\mathring T_{u,v}$, $0<x-\frac{u}{p}<y-\frac{v}{p}<\frac{1}{p}$. Then there exists $0<\epsilon_1, \epsilon_2, \eta_1, \eta_2<\frac{1}{p}$ such that $x=\frac{u}{p}+\epsilon_1=\frac{u+1}{p}-\eta_1$ and $y=\frac{v}{p}+\epsilon_2=\frac{v+1}{p}-\eta_2$. For each $N>0$, define $j(N)=\lfloor(N+1)x\rfloor$ and $k(N)=\lfloor(N+1)y\rfloor$. When $N$ is large enough, $j(N)=\lfloor(N+1)(\frac{u}{p}+\epsilon_1)\rfloor=uq(N)+\lfloor u\frac{l(N)}{p}+\epsilon_1(N+1)\rfloor\geq uq(N)+\min\{u,l(N)\}=a_u$, while $j(N)=\lfloor(N+1)(\frac{u+1}{p}-\eta_1)\rfloor=(u+1)q(N)+\lfloor(u+1)\frac{l(N)}{p}-\eta_1(N+1)\rfloor\leq(u+1)q(N)+\min\{u+1,l(N)\}-1=a_{u+1}-1$ for $0\leq u<p-1$, which indicates that $j(N)\in I_u(N)$. And similarly, $k(N)\in I_v(N)$ for $N$ large. Moreover, $\di\lim_{N\to\infty}\frac{\tau(j(N))}{N+1}=p\di\lim_{N\to\infty}\frac{j(N)}{N+1}-u=p\di\lim_{N\to\infty}\frac{\lfloor(N+1)x\rfloor}{N+1}-u=px-u$, similarly $\di\lim_{N\to\infty}\frac{\tau(k(N))}{N+1}=py-v$. And since $0<px-u<py-v<1$, for $N$ large enough, $0<\frac{\tau(j(N))}{N+1}<\frac{\tau(k(N))}{N+1}<1\ \Rightarrow\ 0<\tau(j(N))<\tau(k(N))\leq N$. Thus by the definition of $j(N)$ and $k(N)$, we have, for $N$ large, $(x,y)\in[\frac{j(N)}{N+1},\frac{j(N)+1}{N+1}]\times[\frac{k(N)}{N+1},\frac{k(N)+1}{N+1}]\subset\di\bigcup_{(j,k)\in L_{u,v,N}}\big[\frac{j}{N+1},\frac{j+1}{N+1}\big]\times\big[\frac{k}{N+1},\frac{k+1}{N+1}\big]=T_{u,v}(N)$, which implies that $(x,y)\in\di\liminf_{N\to\infty}T_{u,v}(N)$. \\

Inclusion, we have
\begin{align*}
\begin{split}
\mathring T_{u,v}&\subset\di\liminf_{N\to\infty}T_{u,v}(N)\subset\di\limsup_{N\to\infty}T_{u,v}(N)\subset T_{u,v}, \\
&\Rightarrow\ \di\lim_{N\to\infty}T_{u,v}(N)\setminus\partial T_{u,v}=\mathring T_{u,v}.
\end{split}
\end{align*}
\end{proof}

Let $\zeta=(\zeta_J)^t_{J\in\Ga_{m,N}}=(\tilde{s}_N(\xi_J))^t_{J\in\Ga_{m,N}}=(\tilde{s}_N(\xi_{1,j_1},\dots,\xi_{m,j_m}))^t_{J\in\Ga_{m,N}}$ be a dimension $\binom{N+m}{m}$ mean zero complex Gaussian random vector. Denote its covariance matrix by $\Si$, then $\forall J=(j_1,\dots,j_m),J'=(j'_1,\dots,j'_m)\in\Ga_{m,N}$,
\begin{align*}
\begin{split}
\Si_{J,J'}
&=\E_N(\zeta_J\bar{\zeta}_{J'})=\E_N(\tilde{s}_N(\xi_J)\overline{\tilde{s}_N(\xi_{J'})}) \\
&=\sum_{K\in\La_{m,N}}\big[\sqrt{\binom{N}{K}}\xi_J^K\big]\big[\sqrt{\binom{N}{K}}\bar{\xi}_{J'}^K\big] \\ &=\sum_{K\in\La_{m,N}}\binom{N}{K}(\xi_J\bar{\xi}_{J'})^K \\
&=(1+\xi_J\bar{\xi}_{J'})^N \\
&=(1+\xi_{1,j_1}\bar{\xi}_{1,j'_1}+\dots+\xi_{m,j_m}\bar{\xi}_{m,j'_m})^N, \\
\end{split}
\end{align*}
where $\E_N$ denotes the expectation with respect to the probability measure $\gamma_N$.
\begin{lem}\label{lem(detSi)}
With the assignment of $\xi$ as in table (\ref{table}),
\begin{align*}
\log{(\det{\Si})}=Q_{\kappa r,m}(N)+\frac{2\be_m}{p}N^{m+1}+o(N^{m+1}).
\end{align*}
\end{lem}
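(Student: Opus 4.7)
The plan is to exploit the algebraic factorization $\Si = W_{m,N}(\xi)\,D\,W_{m,N}(\xi)^{*}$, where $D$ is the diagonal matrix indexed by $\La_{m,N}$ with $D_{K,K}=\binom{N}{K}$. This identity is immediate from the formula for $\Si_{J,J'}$ displayed just above the lemma, together with the multinomial expansion $(1+\xi_J\bar\xi_{J'})^N=\sum_{K\in\La_{m,N}}\binom{N}{K}\xi_J^K\bar\xi_{J'}^K$. Since $W_{m,N}(\xi)$ is square of size $\binom{N+m}{m}$, taking determinants on both sides gives
$$\log\det\Si=2\log\abs{\det W_{m,N}(\xi)}+\sum_{K\in\La_{m,N}}\log\binom{N}{K},$$
and the proof reduces to identifying each of these two pieces asymptotically.

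The first piece is precisely the content of Lemma \ref{lem(detW1)}, which substitutes in as $2m\binom{N+m}{m+1}\log(\kappa r)+\tfrac{2\be_m}{p}N^{m+1}+o(N^{m+1})$. For the second, I would rewrite the definition of $Q_{\kappa r,m}(N)$ as $\sum_{K}\log\binom{N}{K}=Q_{\kappa r,m}(N)-2\log(\kappa r)\sum_{K}\abs{K}$. The combinatorial identity required is $\sum_{K\in\La_{m,N}}\abs{K}=m\binom{N+m}{m+1}$, which follows from (\ref{eq2}) applied coordinatewise: by symmetry $\sum_{K}k_i=\deg_{\xi_i}\det W_{m,N}=\binom{N+m}{m+1}$ for each $1\leq i\leq m$. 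Adding the two asymptotics, the two copies of $2m\binom{N+m}{m+1}\log(\kappa r)$ cancel exactly and I arrive at the claimed formula.

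I do not anticipate any substantive obstacle, since the deep analytic work has already been carried out in Lemma \ref{lem(detW1)}. The essential insight is simply that the covariance matrix $\Si$ factors through the Vandermonde-type matrix $W_{m,N}(\xi)$, which is precisely what motivates introducing $W_{m,N}$ in Section 2. The only care point is bookkeeping the exact cancellation of the $\log(\kappa r)$ terms between $2\log\abs{\det W_{m,N}(\xi)}$ and the reformulation of $\sum_K\log\binom{N}{K}$ in terms of $Q_{\kappa r,m}(N)$; everything else is immediate.
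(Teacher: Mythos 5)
Your proof is correct and is essentially the same as the paper's: the factorization you write, $\Si = W_{m,N}(\xi)\,D\,W_{m,N}(\xi)^{*}$, is the paper's $\Si = V_{m,N}(\xi)V_{m,N}^{*}(\xi)$ with $V_{m,N}(\xi)=W_{m,N}(\xi)D^{1/2}$, and the subsequent steps (apply Lemma \ref{lem(detW1)}, absorb $2m\binom{N+m}{m+1}\log(\kappa r)=2\sum_{K}\abs{K}\log(\kappa r)$ into $\sum_K\log\binom{N}{K}$ to recover $Q_{\kappa r,m}(N)$) match the paper's bookkeeping exactly, with the identity $\sum_{K}\abs{K}=m\binom{N+m}{m+1}$ justified by the same degree count from (\ref{eq2}).
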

\begin{proof}
\begin{align*}
\Si=V_{m,N}(\xi)V_{m,N}^*(\xi),
\end{align*}
where $V_{m,N}(\xi)=\big(\sqrt{\binom{N}{K}}\xi^K_J\big)_{J\in\Ga_{m,N},\ K\in\La_{m,N}}$ is an $\binom{N+m}{m}\times\binom{N+m}{m}$ matrix.
\begin{align*}
\Rightarrow \det{\Si}=\abs{\det{V_{m,N}(\xi)}}^2=\di\prod_{K\in\La_{m,N}}\binom{N}{K}\abs{\det{W_{m,N}(\xi)}}^2
\end{align*}
By Lemma \ref{lem(detW1)},
\begin{align*}
\begin{split}
\log{(\det{\Si})}
&=\di\sum_{K\in\La_{m,N}}\log{\binom{N}{K}}+2\log{\abs{\det{W_{m,N}(\xi)}}} \\
&=\di\sum_{K\in\La_{m,N}}\log{\binom{N}{K}}+2m\binom{N+m}{m+1}\log{(\kappa r)}+\frac{2\be_m}{p}N^{m+1}+o(N^{m+1}) \\
&=\di\sum_{K\in\La_{m,N}}\log{\binom{N}{K}}+2\di\sum_{K\in\La_{m,N}}\abs{K}\log{(\kappa r)}+\frac{2\be_m}{p}N^{m+1}+o(N^{m+1}) \\
&=Q_{\kappa r,m}(N)+\frac{2\be_m}{p}N^{m+1}+o(N^{m+1}).
\end{split}
\end{align*}
\end{proof}
As $\log{\abs{\tilde{s}_N(z)}}$ is plurisubharmonic in a neighbourhood of $(\bar{D}(0,r))^m$, we have
\begin{align}\label{ineq2}
\begin{split}
&\log{\di\prod_{J\in\Ga_{m,N}}\abs{\zeta_J}} \\
=&\di\sum_{J\in\Ga_{m,N}}\log{\abs{\tilde{s}_N(\xi_J)}} \\
\leq&\di\sum_{J\in\Ga_{m,N}}\int_{\pa D(0,r)}\cdots\int_{\pa D(0,r)}\log{\abs{\tilde{s}_N(u)}}\di\prod_{i=1}^mP_r(\xi_{i,j_i},u_i)\ d\si_r(u_1)\cdots d\si_r(u_m) \\
=&(N+1)^m\int_{\pa D(0,r)}\cdots\int_{\pa D(0,r)}\log{\abs{\tilde{s}_N(u)}}
\big[\sum_{J\in\Ga_{m,N}}\prod_{i=1}^m\frac{P_r(\xi_{i,j_i},u_i)}{N+1}-\int_{H}\prod_{i=1}^mP_r(\kappa re^{2\pi\sqrt{-1}x_i},u_i)\ d_mx\big] \\
&d\si_r(u_1)\cdots d\si_r(u_m) \\
&+(N+1)^m\int_{\pa D(0,r)}\cdots\int_{\pa D(0,r)}\log{\abs{\tilde{s}_N(u)}}\int_{H}\prod_{i=1}^mP_r(\kappa re^{2\pi\sqrt{-1}x_i},u_i)\ d_mx
\ d\si_r(u_1)\cdots d\si_r(u_m) \\
=&I+II,
\end{split}
\end{align}
where $P_r(\xi,u)=\frac{r^2-\abs{\xi}^2}{\abs{u-\xi}^2}$ is the Poisson kernel of $D(0,r)$, $d\si_r$ is the Haar measure on $\pa D(0,r)$, $d_mx$ is the Lebesgue measure on $\R^m$, and
\begin{align*}
H=\di\bigcup_{0\leq t_1,\dots,t_m\leq p-1}H_{t_1,\dots,t_m}:=\di\bigcup_{0\leq t_1,\dots,t_m\leq p-1}\{x=(x_1,\dots,x_m)\in\R^m:\ 0\leq x_1-\frac{t_1}{p}\leq\dots\leq x_m-\frac{t_m}{p}\leq\frac{1}{p}\}.
\end{align*}
\begin{align}\label{ineq3}
\begin{split}
I\leq&(N+1)^m\max_{u\in(\pa D(0,r))^m}
\Abs{\sum_{J\in\Ga_{m,N}}\prod_{i=1}^m\frac{P_r(\xi_{i,j_i},u_i)}{N+1}-\int_{H}\prod_{i=1}^mP_r(\kappa re^{2\pi\sqrt{-1}x_i},u_i)\ d_mx} \\
&\times\int_{\pa D(0,r)}\cdots\int_{\pa D(0,r)}\Abs{\log{\abs{\tilde{s}_N(u)}}}\ d\si_r(u_1)\cdots d\si_r(u_m).
\end{split}
\end{align}
First let's estimate $\int_{\pa D(0,r)}\cdots\int_{\pa D(0,r)}\Abs{\log{\abs{\tilde{s}_N(u)}}}\ d\si_r(u_1)\cdots d\si_r(u_m)$.
\begin{lem}\label{lem(sup)}
$\gamma_N\big(\di\sup_{u\in(\pa D(0,r))^m}\abs{\tilde{s}_N(u)}<1\big)\leq e^{-Q_{r,m}(N)}$.
\end{lem}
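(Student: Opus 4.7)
The plan is to convert the sup bound into an $L^2$ bound on the torus, use orthogonality of monomials to get a constraint on each Gaussian coefficient individually, and then exploit independence.

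First, I would normalize $d\sigma_r$ to have total mass one (or keep track of the constant if it is arclength). If $\sup_{u \in (\partial D(0,r))^m} |\tilde s_N(u)| < 1$, then a fortiori
$$\int_{\partial D(0,r)} \cdots \int_{\partial D(0,r)} |\tilde s_N(u)|^2 \, d\sigma_r(u_1) \cdots d\sigma_r(u_m) < 1.$$
Writing out $\tilde s_N(u) = \sum_{K \in \Lambda_{m,N}} c_K \sqrt{\binom{N}{K}}\, u^K$ and using that the characters $u \mapsto u^K / r^{|K|}$ form an orthonormal family on the product torus with the normalized Haar measure, the integral on the left collapses to
$$\sum_{K \in \Lambda_{m,N}} |c_K|^2 \binom{N}{K} r^{2|K|}.$$
Since every summand is nonnegative, the event $\{\sup_u |\tilde s_N(u)| < 1\}$ is contained in the event
$$\bigcap_{K \in \Lambda_{m,N}} \Big\{ |c_K|^2 < \tfrac{1}{\binom{N}{K} r^{2|K|}} \Big\}.$$

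Next I would use that the $\{c_K\}$ are independent standard complex Gaussians, so $|c_K|^2$ are i.i.d.\ $\mathrm{Exp}(1)$ random variables, giving $\gamma_N\bigl(|c_K|^2 < a\bigr) = 1 - e^{-a} \leq a$ for every $a \geq 0$. By independence,
$$\gamma_N\Big(\sup_u |\tilde s_N(u)| < 1\Big) \;\leq\; \prod_{K \in \Lambda_{m,N}} \frac{1}{\binom{N}{K} r^{2|K|}} \;=\; \exp\Big(-\sum_{K \in \Lambda_{m,N}} \log\bigl[\tbinom{N}{K} r^{2|K|}\bigr]\Big) \;=\; e^{-Q_{r,m}(N)},$$
which is the desired bound.

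There is no substantial obstacle here: the two ingredients are (i) the trivial implication $L^\infty < 1 \Rightarrow L^2 < 1$, which lets us replace a complicated sup over a continuum by a single quadratic functional of the $c_K$, and (ii) the elementary one-sided tail bound $1 - e^{-a} \leq a$ for the exponential distribution. The only point requiring a small amount of care is the normalization of $d\sigma_r$ so that the monomials $u^K / r^{|K|}$ are genuinely orthonormal on $(\partial D(0,r))^m$; any other normalization only changes the $L^2$ threshold by a harmless multiplicative constant and does not affect the argument.
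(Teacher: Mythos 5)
Your argument is correct and reaches the same coefficient-wise bound $\abs{c_K}\leq\big[\binom{N}{K}r^{2\abs{K}}\big]^{-1/2}$ that the paper obtains, but via a different first step. The paper applies Cauchy's integral formula to $\frac{\pa^K}{\pa u^K}\tilde s_N(0)$ over the distinguished boundary, which gives the pointwise bound $\abs{c_K}\leq\sup\abs{\tilde s_N}/\big(\sqrt{\binom{N}{K}}r^{\abs K}\big)$ directly; you instead pass from $L^\infty<1$ to $L^2<1$ on the torus and use Parseval (orthogonality of $u\mapsto u^K/r^{\abs K}$ with respect to the normalized Haar measure) to read off $\sum_K\abs{c_K}^2\binom{N}{K}r^{2\abs K}<1$, from which the same coefficient-wise bound falls out term by term. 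The two routes are morally equivalent — both are the statement that the monomial Fourier coefficients cannot exceed the sup — but yours sidesteps the explicit contour integral and is perhaps a hair more transparent about where the factor $r^{2\abs K}$ comes from. From the coefficient bound onward, your use of $\gamma_N(\abs{c_K}^2<a)=1-e^{-a}\leq a$ together with independence of the $c_K$ is exactly the paper's computation. One small remark: your containment is actually slightly wasteful in principle (the event $\sum_K\abs{c_K}^2\binom{N}{K}r^{2\abs K}<1$ is much smaller than the intersection of the individual constraints), but since the target bound is exactly the product $\prod_K\big[\binom{N}{K}r^{2\abs K}\big]^{-1}$, discarding the cross-terms is precisely what one wants here, and the final inequality is the same.
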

\begin{proof}
\begin{align*}
\begin{split}
&\tilde{s}_N(u)=\sum_{K\in\La_{m,N}}c_K\sqrt{\binom{N}{K}}u^K \\
\Rightarrow&\frac{\pa^K}{\pa u^K}\tilde{s}_N(0)=K!\sqrt{\binom{N}{K}}c_K,
\end{split}
\end{align*}
where $\frac{\pa^K}{\pa u^K}$ refers to $\frac{\pa^{k_1}}{\pa u_1^{k_1}}\cdots\frac{\pa^{k_m}}{\pa u_1^{k_m}}$ and $K!=k_1!\cdots k_m!$.

By Cauchy's integral formula,
\begin{align*}
\frac{\pa^K}{\pa u^K}\tilde{s}_N(0)=
\frac{K!}{(2\pi\sqrt{-1})^m}\int_{\pa D(0,r)}\cdots\int_{\pa D(0,r)}\frac{\tilde{s}_N(u)}{\di\prod_{i=1}^mu_i^{k_i+1}}\ du_1\cdots du_m,
\end{align*}
\begin{align*}
\begin{split}
&\Rightarrow c_K=\binom{N}{K}^{-\half}
\frac{1}{(2\pi\sqrt{-1})^m}\int_{\pa D(0,r)}\cdots\int_{\pa D(0,r)}\frac{\tilde{s}_N(u)}{\di\prod_{i=1}^mu_i^{k_i+1}}\ du_1\cdots du_m, \\
&\Rightarrow \abs{c_K}\leq\frac{\di\sup_{u\in(\pa D(0,r))^m}\abs{\tilde{s}_N(u)}}{\sqrt{\binom{N}{K}}r^{\abs{K}}},\ \forall\ K\in\La_{m,N}.
\end{split}
\end{align*}
Therefore, $\di\sup_{u\in(\pa D(0,r))^m}\abs{\tilde{s}_N(u)}<1$ would imply that $\forall\ K\in\La_{m,N}$,
\begin{align*}
\abs{c_K}\leq\big[\binom{N}{K}r^{2\abs{K}}\big]^{-\half}.
\end{align*}
\begin{align*}
\begin{split}
\Rightarrow \gamma_N\big(\di\sup_{u\in(\pa D(0,r))^m}\abs{\tilde{s}_N(u)}<1\big)
&\leq\di\prod_{K\in\La_{m,N}}\gamma_N\Big(\abs{c_K}\leq\big[\binom{N}{K}r^{2\abs{K}}\big]^{-\half}\Big) \\
&\leq\di\prod_{K\in\La_{m,N}}\big[\binom{N}{K}r^{2\abs{K}}\big]^{-1} \\
&=e^{-Q_{r,m}(N)}.
\end{split}
\end{align*}
\end{proof}

The next lemma follows directly from the first part of Theorem 3.1 in \cite{SZZ}. But here we provide a self-contained proof without using the language of sections and metrics.
\begin{lem}\label{lem(sups_N)}
Given $U\subset\C^m$ open and bounded with $\di\sup_{z\in\bar{U}}\abs{z}=R>0$, then $\forall\ \eta>0$,
\begin{align*}
\gamma_N\{\di\sup_{z\in\bar{U}}\abs{\tilde{s}_N(z)}>(1+R^2)^{\frac{N}{2}}e^{\eta N}\}\leq e^{-e^{\eta N}},\text{ for }N\gg1.
\end{align*}
\end{lem}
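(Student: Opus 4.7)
The plan is to combine a deterministic pointwise bound on $\abs{\tilde{s}_N(z)}$ with a standard Gaussian tail estimate on the Euclidean norm of the coefficient vector $c=(c_K)_{K\in\La_{m,N}}$. The crucial observation is the multinomial identity $\di\sum_{K\in\La_{m,N}}\binom{N}{K}\abs{z}^{2K}=(1+\norm{z}^2)^N$, obtained by expanding $(1+\abs{z_1}^2+\cdots+\abs{z_m}^2)^N$; this precisely aligns the weights appearing in $\tilde{s}_N$ with the Fubini--Study normalization $(1+\norm{z}^2)^{-N/2}$ implicit in the statement.

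Applying Cauchy--Schwarz to $\tilde{s}_N(z)=\di\sum_K c_K\sqrt{\binom{N}{K}}z^K$ and invoking the identity above, I obtain $\abs{\tilde{s}_N(z)}^2\leq\big(\di\sum_K\abs{c_K}^2\big)\big(\di\sum_K\binom{N}{K}\abs{z}^{2K}\big)=\norm{c}^2(1+\norm{z}^2)^N$. Since $\norm{z}\leq R$ on $\bar{U}$, this yields $\di\sup_{z\in\bar{U}}\abs{\tilde{s}_N(z)}\leq\norm{c}(1+R^2)^{N/2}$, so the event under consideration is contained in $\{\norm{c}>e^{\eta N}\}$, and it remains only to prove $\gamma_N(\norm{c}^2>e^{2\eta N})\leq e^{-e^{\eta N}}$ for $N\gg1$.

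For the tail estimate, $\norm{c}^2=\di\sum_{K\in\La_{m,N}}\abs{c_K}^2$ is the sum of $d:=\binom{N+m}{m}$ i.i.d.\ mean-one exponential random variables, so a Chernoff bound gives $\gamma_N(\norm{c}^2>t)\leq(1-s)^{-d}e^{-st}$ for every $s\in(0,1)$, and optimizing with $s=1-d/t$ (valid for $t>d$) produces $\gamma_N(\norm{c}^2>t)\leq(et/d)^d e^{-t}$. Taking $t=e^{2\eta N}$ and using $d=O(N^m)$, the logarithm of the right-hand side equals $d\log(et/d)-t=O(N^{m+1})-e^{2\eta N}$, which is certainly smaller than $-e^{\eta N}$ once $N$ is large. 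The only conceptual step is the Cauchy--Schwarz reduction, which neatly encodes why $(1+R^2)^{N/2}$ is the natural normalizer; the subsequent Gamma-tail computation presents no real obstacle.
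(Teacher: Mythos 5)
Your proof is correct and follows essentially the same route as the paper: the identical Cauchy--Schwarz step with the multinomial identity $\sum_{K\in\La_{m,N}}\binom{N}{K}\abs{z}^{2K}=(1+\norm{z}^2)^N$ reduces the event to $\{\norm{c}>e^{\eta N}\}$. The only difference is cosmetic: the paper bounds the tail of $\norm{c}^2$ via the exact Gamma-tail sum $e^{-e^{2\eta N}}\sum_{k=0}^{\binom{N+m}{m}-1}e^{(2\eta N)k}/k!$, while you use a Chernoff bound $(et/d)^d e^{-t}$; both give $O(N^{m+1})-e^{2\eta N}\leq-e^{\eta N}$ for $N\gg1$.
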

\begin{proof}
By Cauchy-Schwartz inequality,
\begin{align*}
\begin{split}
\di\sup_{z\in\bar{U}}\abs{\tilde{s}_N(z)}
&=\di\sup_{z\in\bar{U}}\Big|\di\sum_{K\in\La_{m,N}}c_K\sqrt{\binom{N}{K}}z^K\Big| \\
&\leq\abs{c}\di\sup_{z\in\bar{U}}\Big[\di\sum_{K\in\La_{m,N}}\binom{N}{K}\abs{z}^{2K}\Big]^{\half} \\
&=\abs{c}\di\sup_{z\in\bar{U}}(1+\abs{z}^2)^{\frac{N}{2}} \\
&=\abs{c}(1+R^2)^{\frac{N}{2}},
\end{split}
\end{align*}
\begin{align*}
\begin{split}
\Rightarrow\ &\gamma_N\{\di\sup_{z\in\bar{U}}\abs{\tilde{s}_N(z)}>(1+R^2)^{\frac{N}{2}}e^{\eta N}\} \\
\leq&\gamma_N\{\abs{c}>e^{\eta N}\} \\
=&e^{-e^{2\eta N}}\di\sum_{k=0}^{\binom{N+m}{m}-1}\frac{e^{(2\eta N)k}}{k!},
\end{split}
\end{align*}
\begin{align*}
\begin{split}
\Rightarrow\ &\log{\gamma_N\{\di\sup_{z\in\bar{U}}\abs{\tilde{s}_N(z)}>(1+R^2)^{\frac{N}{2}}e^{\eta N}\}} \\
\leq&-e^{2\eta N}+\log{\binom{N+m}{m}}+(2\eta N)\Big[\binom{N+m}{m}-1\Big] \\
\leq&-e^{\eta N},\text{ for }N\gg1.
\end{split}
\end{align*}
\end{proof}

\begin{lem}\label{lem(int)}
$\di\int_{\pa D(0,r)}\cdots\int_{\pa D(0,r)}\Abs{\log\abs{\tilde{s}_N(u)}}\ d\si_r(u_1)\cdots d\si_r(u_m)\leq\frac{CN}{\de^m}$ for some constant $C$ outside an event of probability at most $e^{-e^N}+e^{-Q_{\kappa r,m}(N)}$.
\end{lem}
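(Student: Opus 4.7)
The plan is to split $\log|\tilde s_N| = \log_+|\tilde s_N| - \log_-|\tilde s_N|$ and bound each piece on the distinguished boundary $(\pa D(0,r))^m$ separately, using Lemma \ref{lem(sups_N)} for the $\log_+$ piece and the polydisc Poisson--Jensen inequality combined with Lemma \ref{lem(sup)} for the $\log_-$ piece.

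For $\log_+$, I would apply Lemma \ref{lem(sups_N)} with $U=(D(0,r))^m$ (so $R=r\sqrt m$) and $\eta=1$: outside an event of probability at most $e^{-e^N}$, we have $\di\sup_{u\in(\pa D(0,r))^m}|\tilde s_N(u)| \leq (1+mr^2)^{N/2}e^N$, and hence $\log_+|\tilde s_N(u)|\leq C_0 N$ uniformly on $(\pa D(0,r))^m$ for some constant $C_0=C_0(m,r)$. Since $d\si_r$ is normalized, this integrates to $\int \log_+|\tilde s_N|\,d\si_r(u_1)\cdots d\si_r(u_m)\leq C_0 N$.

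For $\log_-$, the idea is to exploit the existence of a point where $|\tilde s_N|\geq 1$. Applying Lemma \ref{lem(sup)} with $r$ replaced by $\kappa r$, outside an event of probability at most $e^{-Q_{\kappa r,m}(N)}$ there exists $\xi^*\in(\pa D(0,\kappa r))^m$ with $\log|\tilde s_N(\xi^*)|\geq 0$. Since $\log|\tilde s_N|$ is plurisubharmonic on a neighborhood of $(\bar D(0,r))^m$, the polydisc sub-mean-value inequality (iterated one-variable Poisson--Jensen in each coordinate) yields
\begin{align*}
0\leq \log|\tilde s_N(\xi^*)| \leq \int_{(\pa D(0,r))^m}\log|\tilde s_N(u)|\prod_{i=1}^m P_r(\xi^*_i,u_i)\,d\si_r(u_1)\cdots d\si_r(u_m),
\end{align*}
so splitting $\log=\log_+-\log_-$ on the right gives
\begin{align*}
\int \log_-|\tilde s_N(u)|\prod_{i=1}^m P_r(\xi^*_i,u_i)\,d\si_r(u_1)\cdots d\si_r(u_m)\leq \int \log_+|\tilde s_N(u)|\prod_{i=1}^m P_r(\xi^*_i,u_i)\,d\si_r(u_1)\cdots d\si_r(u_m).
\end{align*}

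To turn this weighted inequality into the desired unweighted bound on $\int \log_-$, I would use uniform estimates on the Poisson kernel: for $|\xi^*_i|=\kappa r$, $|u_i|=r$, and $\kappa=1-\sqrt\de$,
\begin{align*}
P_r(\xi^*_i,u_i)=\frac{r^2(1-\kappa^2)}{|u_i-\xi^*_i|^2}\in\Bigl[\frac{1-\kappa}{1+\kappa},\ \frac{1+\kappa}{1-\kappa}\Bigr]=\Bigl[\frac{\sqrt\de}{2-\sqrt\de},\ \frac{2-\sqrt\de}{\sqrt\de}\Bigr],
\end{align*}
so $c_1\sqrt\de\leq P_r(\xi^*_i,u_i)\leq c_2/\sqrt\de$ for absolute constants $c_1,c_2>0$. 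Bounding the left-hand side below and the right-hand side above in the weighted inequality yields $\int\log_-|\tilde s_N|\,d\si_r(u_1)\cdots d\si_r(u_m)\leq (c_2/c_1)^m\de^{-m}\int\log_+|\tilde s_N|\,d\si_r(u_1)\cdots d\si_r(u_m)$. Combining with the previous $\log_+$ estimate gives $\int|\log|\tilde s_N||\,d\si_r(u_1)\cdots d\si_r(u_m)\leq C N/\de^m$ outside the union of the two exceptional events, whose total probability is at most $e^{-e^N}+e^{-Q_{\kappa r,m}(N)}$. The only point requiring real care is matching the Poisson kernel bounds to produce the exact $\de^{-m}$ factor; the rest follows routinely from Lemmas \ref{lem(sup)} and \ref{lem(sups_N)} together with the standard sub-mean-value property of plurisubharmonic functions on the polydisc.
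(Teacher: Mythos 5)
Your proposal is correct and follows essentially the same route as the paper's proof: the same $\log_+/\log_-$ decomposition, the same use of Lemma \ref{lem(sups_N)} (with $\eta=1$) for the $\log_+$ piece and Lemma \ref{lem(sup)} on $(\pa D(0,\kappa r))^m$ for the $\log_-$ piece, and the same two-sided Poisson-kernel estimate $P_r\asymp\de^{\pm 1/2}$ to convert the weighted sub-mean-value inequality into the unweighted $\de^{-m}$ bound. The only difference is cosmetic — your kernel bounds $\frac{\sqrt\de}{2-\sqrt\de}\leq P_r\leq\frac{2-\sqrt\de}{\sqrt\de}$ are marginally sharper than the $\frac{\sqrt\de}{2}\leq P_r\leq\frac{2}{\sqrt\de}$ used in the paper, but both give the same $CN/\de^m$ conclusion.
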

\begin{proof}
Applying Lemma \ref{lem(sups_N)} to $U=(D(0,r))^m$, we have
\begin{align}\label{ineq(sup)}
\gamma_N\{\di\sup_{u\in(\pa D(0,r))^m}\abs{\tilde{s}_N(u)}>(1+mr^2)^{\frac{N}{2}}e^{\eta N}\}
\leq\gamma_N\{\di\sup_{u\in(\bar{D}(0,r))^m}\abs{\tilde{s}_N(u)}>(1+mr^2)^{\frac{N}{2}}e^{\eta N}\}\leq e^{-e^{\eta N}}.
\end{align}
Therefore, take $\eta=1$, outside an event of probability at most $e^{-e^N}$,
\begin{align}\label{ineq4}
\begin{split}
&\log^+\abs{\tilde{s}_N(u)}\leq\half N\log(1+mr^2)+N\text{ on }(\pa D(0,r))^m, \\
\Rightarrow&\int_{\pa D(0,r)}\cdots\int_{\pa D(0,r)}\log^+\abs{\tilde{s}_N(u)}\ d\si_r(u_1)\cdots d\si_r(u_m)\leq\half N\log(1+mr^2)+N.
\end{split}
\end{align}
Applying Lemma \ref{lem(sup)} to the distinguished boundary $(\pa D(0,\kappa r))^m$, we have: outside an event of probability at most $e^{-Q_{\kappa r, m}(N)}$,
$\di\sup_{u\in(\pa D(0,\kappa r))^m}\abs{\tilde{s}_N(u)}\geq1$, i.e. $\exists\ \eta\in(\pa D(0,\kappa r))^m$ such that $\abs{\tilde{s}_N(\eta)}\geq1$,
\begin{align}{\label{ineq5}}
\begin{split}
0\leq\log{\abs{\tilde{s}_N(\eta)}}
\leq&\int_{\pa D(0,r)}\cdots\int_{\pa D(0,r)}\log{\abs{\tilde{s}_N(u)}}\di\prod_{i=1}^mP_r(\eta_i,u_i)\ d\si_r(u_1)\cdots d\si_r(u_m) \\
=&\int_{\pa D(0,r)}\cdots\int_{\pa D(0,r)}\log^+{\abs{\tilde{s}_N(u)}}\di\prod_{i=1}^mP_r(\eta_i,u_i)\ d\si_r(u_1)\cdots d\si_r(u_m) \\
&-\int_{\pa D(0,r)}\cdots\int_{\pa D(0,r)}\log^-{\abs{\tilde{s}_N(u)}}\di\prod_{i=1}^mP_r(\eta_i,u_i)\ d\si_r(u_1)\cdots d\si_r(u_m),
\end{split}
\end{align}
Since $\forall\ 1\leq i\leq m$, $\abs{\eta_i}=\kappa r=(1-\sqrt{\de})r$ and $\abs{u_i}=r$, $\frac{\sqrt{\delta}}{2}\leq P_r(\eta_i,u_i)\leq\frac{2}{\sqrt{\de}}$, (\ref{ineq5}) implies that outside an event of probability at most $e^{-Q_{\kappa r,m}(N)}$,
\begin{align}\label{ineq6}
\begin{split}
&\big(\frac{\sqrt{\delta}}{2}\big)^m\int_{\pa D(0,r)}\cdots\int_{\pa D(0,r)}\log^-{\abs{\tilde{s}_N(u)}}\ d\si_r(u_1)\cdots d\si_r(u_m) \\
\leq&\big(\frac{2}{\sqrt{\de}}\big)^m\int_{\pa D(0,r)}\cdots\int_{\pa D(0,r)}\log^+{\abs{\tilde{s}_N(u)}}\ d\si_r(u_1)\cdots d\si_r(u_m).
\end{split}
\end{align}
Combine (\ref{ineq4}) and (\ref{ineq6}), we get: outside an event of probability at most $e^{-e^N}+e^{-Q_{\kappa r,m}(N)}$,
\begin{align*}
\begin{split}
&\int_{\pa D(0,r)}\cdots\int_{\pa D(0,r)}\Abs{\log\abs{\tilde{s}_N(u)}}\ d\si_r(u_1)\cdots d\si_r(u_m) \\
=&\int_{\pa D(0,r)}\cdots\int_{\pa D(0,r)}\log^+\abs{\tilde{s}_N(u)}\ d\si_r(u_1)\cdots d\si_r(u_m)
+\int_{\pa D(0,r)}\cdots\int_{\pa D(0,r)}\log^-\abs{\tilde{s}_N(u)}\ d\si_r(u_1)\cdots d\si_r(u_m) \\
\leq&\big[1+\big(\frac{4}{\de}\big)^m\big]
\int_{\pa D(0,r)}\cdots\int_{\pa D(0,r)}\log^+\abs{\tilde{s}_N(u)}\ d\si_r(u_1)\cdots d\si_r(u_m) \\
\leq&\big[1+\big(\frac{4}{\de}\big)^m\big][\half N\log(1+mr^2)+N]=\frac{CN}{\de^m}.
\end{split}
\end{align*}
\end{proof}
The following lemma estimates $\di\max_{u\in(\pa D(0,r))^m}
\Abs{\sum_{J\in\Ga_{m,N}}\prod_{i=1}^m\frac{P_r(\xi_{i,j_i},u_i)}{N+1}-\int_{H}\prod_{i=1}^mP_r(\kappa re^{2\pi\sqrt{-1}x_i},u_i)\ d_mx}$:

\begin{lem}\label{lem(max)}
$\di\max_{u\in(\pa D(0,r))^m}
\Abs{\sum_{J\in\Ga_{m,N}}\prod_{i=1}^m\frac{P_r(\xi_{i,j_i},u_i)}{N+1}-\int_{H}\prod_{i=1}^mP_r(\kappa re^{2\pi\sqrt{-1}x_i},u_i)\ d_mx}
\leq\frac{o(1)}{\de^{\half(m+1)}}$.
\end{lem}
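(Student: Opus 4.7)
I would view the sum as a Riemann approximation to the integral, after reindexing. Since $\xi_{i,j_i}=z_{\tau^{-1}(j_i)}=\kappa re^{2\pi\sqrt{-1}y_i}$ where $y_i:=\tau^{-1}(j_i)/(N+1)$, and since $\tau^{-1}$ is a bijection on $\{0,\dots,N\}$, as $J$ ranges over $\Ga_{m,N}$ the tuple $(y_1,\dots,y_m)$ traces a subset $S(N)$ of the lattice $\lcal_N:=\{0,\frac{1}{N+1},\dots,\frac{N}{N+1}\}^m$ of cardinality $\binom{N+m}{m}$, and the sum rewrites as
\[
\sum_{J\in\Ga_{m,N}}\prod_{i=1}^m\frac{P_r(\xi_{i,j_i},u_i)}{N+1}=\frac{1}{(N+1)^m}\sum_{y\in S(N)}\prod_{i=1}^m P_r(\kappa re^{2\pi\sqrt{-1}y_i},u_i),
\]
a Riemann sum of $\prod_i P_r(\kappa re^{2\pi\sqrt{-1}x_i},u_i)$ with mesh $h=1/(N+1)$ supported on $S(N)$.

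The key step is to identify $S(N)$ (modulo a lower-order boundary set) with $\lcal_N\cap H$. If $\tau^{-1}(j_i)\in I_{t_i}$, then formula (\ref{tau}) gives $j_i/(N+1)=py_i-t_i+O(N^{-1})$, so the ordering $j_1\leq\cdots\leq j_m$ defining $\Ga_{m,N}$ becomes $py_1-t_1\leq\cdots\leq py_m-t_m$ up to $O(N^{-1})$. This is precisely the condition for $(y_1,\dots,y_m)\in H_{t_1,\dots,t_m}$, and summing over block patterns $(t_1,\dots,t_m)\in\{0,\dots,p-1\}^m$ recovers $H$.

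Granting that identification, I would split $|\mathrm{sum}-\int_H|$ into a boundary contribution and a bulk Riemann error. The $O(N^{-1})$-thick layer where the block assignment is ambiguous contributes at most $O(N^{m-1})$ lattice points to the symmetric difference $S(N)\triangle(\lcal_N\cap H)$, each carrying weight $(N+1)^{-m}(\sup_x P_r)^m$. Using $P_r(\kappa re^{2\pi\sqrt{-1}x},u)=(1-\kappa^2)/|u/r-\kappa e^{2\pi\sqrt{-1}x}|^2$ with $1-\kappa^2=\sqrt\delta(2-\sqrt\delta)$ and denominator bounded below by $(1-\kappa)^2=\delta$, one has $\sup_x P_r=O(\delta^{-1/2})$, so this piece is $O(N^{-1}\delta^{-m/2})=o(1)\delta^{-m/2}$. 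For the bulk, a direct computation (the extremum occurs when $|2\pi x-\arg u|\sim\sqrt\delta$) gives $\sup_{x,u}|\partial_x P_r|=O(\delta^{-1})$, hence $\sup\norm{\nabla\prod_i P_r}\leq m\cdot O(\delta^{-1})\cdot O(\delta^{-(m-1)/2})=O(\delta^{-(m+1)/2})$, yielding a Riemann discretization error of $h\cdot O(\delta^{-(m+1)/2})=o(1)\delta^{-(m+1)/2}$, uniform in $u$.

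The main technical obstacle is the bijective identification of $S(N)$ with $\lcal_N\cap H$ in the second step: the permutation $\tau^{-1}$ shuffles lattice indices across blocks, so one must use (\ref{tau}) with some care to match the single ordering constraint defining $\Ga_{m,N}$ with the union of simplices defining $H$, and to verify that only $O(N^{m-1})$ points are misassigned. Once that identification is in hand, the two error contributions combine—with the bulk Riemann error dominating—to give the claimed bound $o(1)/\delta^{(m+1)/2}$ uniformly in $u\in(\pa D(0,r))^m$.
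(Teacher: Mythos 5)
Your proposal follows essentially the same route as the paper: reindex via $\tau$ to turn the sum into a Riemann sum on the circle of radius $\kappa r$, split by block patterns $(t_1,\dots,t_m)$, control the bulk Riemann error via gradient bounds on the Poisson kernel (correctly getting $\sup P_r=O(\delta^{-1/2})$, $\sup|\partial_x P_r|=O(\delta^{-1})$, hence $\|\nabla\prod_i P_r\|=O(\delta^{-(m+1)/2})$), and control the boundary mismatch separately. The structure and the $\delta$-tracking are right, and the $o(1)/\delta^{(m+1)/2}$ is correctly dominated by the bulk term.

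The one place you diverge — and where you explicitly flag an unresolved "main technical obstacle" — is the boundary term. You claim the set $S(N)$ of reindexed lattice points agrees with $\lcal_N\cap H$ up to $O(N^{m-1})$ misassigned points, but do not prove it. The paper sidesteps the need for this rate: rather than counting misassigned lattice points, it compares the cell-union $H_{t_1,\dots,t_m}(N)$ (whose cells exactly cover the discrete index set) to the simplex $H_{t_1,\dots,t_m}$ via the symmetric difference of the \emph{sets}, showing only $\vol_{\R^m}(H_{t_1,\dots,t_m}(N)\triangle H_{t_1,\dots,t_m})\to 0$ by adapting the $\liminf/\limsup$ argument of Lemma \ref{lem(detW2)}; it then multiplies by $\sup_{\abs{\om}=\kappa r}P_r(\om,u)^m=O(\delta^{-m/2})$. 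This weaker $o(1)$ (which the paper notes may depend on $p$) is all that is needed. Your claimed $O(N^{m-1})$ count would in fact follow from the $O(N^{-1})$ error bound in (\ref{tau}) — each cell either lies in $H_{t_1,\dots,t_m}$ or violates the defining inequalities by at most $O(N^{-1})$, so the symmetric difference sits in an $O(N^{-1})$-neighborhood of $\pa H_{t_1,\dots,t_m}$ — so your version is recoverable and would give a slightly sharper rate, but as written the crucial step is asserted rather than established, whereas the paper proves the measure-convergence it needs.
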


\begin{proof}
For all $u\in(\pa D(0,r))^m$,
\begin{align}\label{ineq7}
\begin{split}
&\Abs{\sum_{J\in\Ga_{m,N}}\prod_{i=1}^m\frac{P_r(\xi_{i,j_i},u_i)}{N+1}-\int_{H}\prod_{i=1}^mP_r(\kappa re^{2\pi\sqrt{-1}x_i},u_i)\ d_mx} \\
=&\Abs{\sum_{\tau(J)\in\Ga_{m,N}}\prod_{i=1}^m\frac{P_r(\xi_{i,\tau(j_i)},u_i)}{N+1}-\int_{H}\prod_{i=1}^mP_r(\kappa re^{2\pi\sqrt{-1}x_i},u_i)\ d_mx}
\\
\leq&\di\sum_{0\leq t_1,\dots,t_m\leq p-1}\Abs{\di\sum_{J\in I_{t_1}\times\cdots\times I_{t_m}:\ \tau(J)\in\Ga_{m,N}}
\di\prod_{i=1}^m\frac{P_r(z_{j_i},u_i)}{N+1}-\int_{H_{t_1,\dots,t_m}}\prod_{i=1}^mP_r(\kappa re^{2\pi\sqrt{-1}x_i},u_i)\ d_mx} \\
\leq&\di\sum_{0\leq t_1,\dots,t_m\leq p-1}
\Abs{\di\sum_{J\in I_{t_1}\times\cdots\times I_{t_m}:\ \tau(J)\in\Ga_{m,N}}
\di\prod_{i=1}^m\frac{P_r(z_{j_i},u_i)}{N+1}
-\int_{H_{t_1,\dots,t_m}(N)}\prod_{i=1}^mP_r(\kappa re^{2\pi\sqrt{-1}x_i},u_i)\ d_mx} \\
&+\di\sum_{0\leq t_1,\dots,t_m\leq p-1}
\Abs{\int_{H_{t_1,\dots,t_m}(N)}\prod_{i=1}^mP_r(\kappa re^{2\pi\sqrt{-1}x_i},u_i)\ d_mx
-\int_{H_{t_1,\dots,t_m}}\prod_{i=1}^mP_r(\kappa re^{2\pi\sqrt{-1}x_i},u_i)\ d_mx},
\end{split}
\end{align}
where $H_{t_1,\dots,t_m}(N)=\di\bigcup_{J\in I_{t_1}\times\cdots\times I_{t_m}:\ \tau(J)\in\Ga_{m,N}}\big[\frac{j_1}{N+1},\frac{j_1+1}{N+1}\big]\times\cdots\times\big[\frac{j_m}{N+1},\frac{j_m+1}{N+1}\big]$.

$\forall\ 0\leq t_1,\dots,t_m\leq p-1$,
\begin{align*}
\begin{split}
&\Abs{\di\sum_{J\in I_{t_1}\times\cdots\times I_{t_m}:\ \tau(J)\in\Ga_{m,N}}
\di\prod_{i=1}^m\frac{P_r(z_{j_i},u_i)}{N+1}
-\int_{H_{t_1,\dots,t_m}(N)}\prod_{i=1}^mP_r(\kappa re^{2\pi\sqrt{-1}x_i},u_i)\ d_mx} \\
\leq&\di\sum_{J\in I_{t_1}\times\cdots\times I_{t_m}:\ \tau(J)\in\Ga_{m,N}}
\int_{\big[\frac{j_1}{N+1},\frac{j_1+1}{N+1}\big]\times\cdots\times\big[\frac{j_m}{N+1},\frac{j_m+1}{N+1}\big]}
\Abs{\prod_{i=1}^mP_r(\kappa re^{2\pi\sqrt{-1}x_i},u_i)-\prod_{i=1}^mP_r(\kappa re^{2\pi\sqrt{-1}\frac{j_i}{N+1}},u_i)}\ d_mx \\
\leq&\frac{(q+1)^m}{(N+1)^m}m\di\sup_{\abs{\om}=\kappa r,\abs{u}=r}[P_r(\om,u)]^{m-1}
\di\sup_{\abs{\om}\leq\kappa r,\abs{u}=r}\Abs{\frac{\pa{P_r(\om,u)}}{\pa\om}}\frac{2\pi\kappa r}{N+1} \\
\leq&\frac{C}{p^m\de^{\half(m+1)}(N+1)},
\end{split}
\end{align*}
\begin{align}\label{ineq8}
\begin{split}
\Rightarrow&\di\sum_{0\leq t_1,\dots,t_m\leq p-1}
\Abs{\di\sum_{J\in I_{t_1}\times\cdots\times I_{t_m}:\ \tau(J)\in\Ga_{m,N}}
\di\prod_{i=1}^m\frac{P_r(z_{j_i},u_i)}{N+1}
-\int_{H_{t_1,\dots,t_m}(N)}\prod_{i=1}^mP_r(\kappa re^{2\pi\sqrt{-1}x_i},u_i)\ d_mx} \\
\leq&\frac{C}{\de^{\half(m+1)}(N+1)}=\frac{o(1)}{\de^{\half(m+1)}}
\end{split}
\end{align}
To bound the second term in (\ref{ineq7}), we need the following statement, which can be proved in a similar way as Lemma \ref{lem(detW2)}:
\begin{align*}
\di\lim_{N\to\infty}\vol_{\R^m}(H_{t_1,\dots,t_m}(N)\ \triangle\ H_{t_1,\dots,t_m})=0\text{ for any }0\leq t_1,\dots,t_m\leq p-1.
\end{align*}
Hence,
\begin{align}\label{ineq9}
\begin{split}
&\di\sum_{0\leq t_1,\dots,t_m\leq p-1}
\Abs{\int_{H_{t_1,\dots,t_m}(N)}\prod_{i=1}^mP_r(\kappa re^{2\pi\sqrt{-1}x_i},u_i)\ d_mx
-\int_{H_{t_1,\dots,t_m}}\prod_{i=1}^mP_r(\kappa re^{2\pi\sqrt{-1}x_i},u_i)\ d_mx} \\
\leq&\di\sum_{0\leq t_1,\dots,t_m\leq p-1}
\vol_{R^m}(H_{t_1,\dots,t_m}(N)\ \triangle\ H_{t_1,\dots,t_m})
\big[\di\sup_{\abs{\om}=\kappa r,\abs{u}=r}P_r(\om,u)\big]^m \\
\leq&\di\sum_{0\leq t_1,\dots,t_m\leq p-1}o(1)\big(\frac{2}{\sqrt{\de}}\big)^m \\
=&\frac{o(1)}{\de^{\half m}}.
\end{split}
\end{align}
This $o(1)$ may depend on $p$.

By (\ref{ineq7}), (\ref{ineq8}) and (\ref{ineq9}), we prove the lemma.
\end{proof}

Combine (\ref{ineq3}), Lemma \ref{lem(int)} and Lemma \ref{lem(max)}, we have: outside an event of probability at most $e^{-e^N}+e^{-Q_{\kappa r,m}(N)}$,
\begin{align*}
I\leq(N+1)^m\frac{o(1)}{\de^{\half(m+1)}}\frac{CN}{\de^m}=\frac{o(N^{m+1})}{\de^{\frac{3}{2}m+\half}}.
\end{align*}

By changing the order of integration,
\begin{align*}
II=(N+1)^m\int_{H}
\int_{\pa D(0,r)}\cdots\int_{\pa D(0,r)}\log{\abs{\tilde{s}_N(u)}}\prod_{i=1}^mP_r(\kappa re^{2\pi\sqrt{-1}x_i},u_i)\ d\si_r(u_1)\cdots d\si_r(u_m)\ d_mx.
\end{align*}
If $\tilde{s}_N$ is nonvanishing on $(\bar{D}(0,r))^m$, $\log\abs{\tilde{s}_N(u)}$ is harmonic in $u_i\in$ a neighbourhood of $\bar{D}(0,r)$ for each fixed $(u_1,\dots,\hat{u}_i,\dots,u_m)\in(\bar{D}(0,r))^{m-1}$. Applying mean value theorem for harmonic functions, we get
\begin{align*}
\begin{split}
II
=&(N+1)^m\times \\
&\int_{H}\int_{\pa D(0,r)}\cdots\int_{\pa D(0,r)}\log{\abs{\tilde{s}_N(\kappa re^{2\pi\sqrt{-1}x_1},u_2,\dots,u_m)}}
\prod_{i=2}^mP_r(\kappa re^{2\pi\sqrt{-1}x_i},u_i)\ d\si_r(u_2)\cdots d\si_r(u_m)\ d_mx \\
=&\dots \\
=&(N+1)^m\int_{H}\log{\abs{\tilde{s}_N(\kappa re^{2\pi\sqrt{-1}x_1},\dots,\kappa re^{2\pi\sqrt{-1}x_m})}}\ d_mx.
\end{split}
\end{align*}
Denote
\begin{align}\label{Xi}
\Xi=\di\int_{H}\log{\abs{\tilde{s}_N(\kappa re^{2\pi\sqrt{-1}x_1},\dots,\kappa re^{2\pi\sqrt{-1}x_m})}}\ d_mx,
\end{align}
which is a complex random variable. Thus we have proved:
\begin{lem}\label{lem(logProdzeta)}
If $\tilde{s}_N$ is nonvanishing on $(\bar{D}(0,r))^m$, then outside an event of probability at most $e^{-e^N}+e^{-Q_{\kappa r,m}(N)}$,
\begin{align*}
\log{\di\prod_{J\in\Ga_{m,N}}\abs{\zeta_J}}\leq\frac{o(N^{m+1})}{\de^{\frac{3}{2}m+\half}}+(N+1)^m\Xi.
\end{align*}
\end{lem}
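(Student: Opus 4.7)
The plan is to chain together the displays (\ref{ineq2})--(\ref{ineq3}) and Lemmas \ref{lem(int)}, \ref{lem(max)} that immediately precede the statement, together with the mean value property for harmonic functions, to bound $\log\prod_J|\zeta_J|$ from above by the two contributions $I$ and $II$, and then to control each.

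First, since $\log|\tilde s_N(z)|$ is plurisubharmonic in a neighborhood of $(\bar D(0,r))^m$, I would apply the $m$-fold iterated sub-mean-value inequality against the product of Poisson kernels on $(\partial D(0,r))^m$, obtaining for each $J=(j_1,\dots,j_m)\in\Ga_{m,N}$
\[
\log|\tilde s_N(\xi_J)|\leq\int_{(\pa D(0,r))^m}\log|\tilde s_N(u)|\prod_{i=1}^m P_r(\xi_{i,j_i},u_i)\,d\si_r(u_1)\cdots d\si_r(u_m).
\]
Summing over $J$ and inserting and subtracting the continuous counterpart $\int_H\prod_i P_r(\kappa r e^{2\pi\sqrt{-1}x_i},u_i)\,d_mx$ splits the right-hand side into the pair $I+II$ displayed in (\ref{ineq2}).

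Next, I would bound $I$ by pulling the discretization error out of the integral in absolute value, which is exactly the estimate (\ref{ineq3}). Lemma \ref{lem(max)} controls the supremum in $u$ by $o(1)/\de^{(m+1)/2}$, while Lemma \ref{lem(int)}, valid outside an event of probability at most $e^{-e^N}+e^{-Q_{\kappa r,m}(N)}$, controls $\int_{(\pa D(0,r))^m}|\log|\tilde s_N(u)||\,d\si_r$ by $CN/\de^m$. Multiplying and absorbing $(N+1)^m$ yields
\[
I\leq\frac{o(N^{m+1})}{\de^{\tfrac{3}{2}m+\tfrac{1}{2}}}
\]
on the same exceptional event.

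For $II$, I would use Fubini to swap the order of integration so that the inner integrals are taken against Poisson kernels in one variable at a time. Under the hypothesis that $\tilde s_N$ does not vanish on $(\bar D(0,r))^m$, the function $\log|\tilde s_N(u_1,\dots,u_m)|$ is harmonic in each $u_i$ separately on a neighborhood of $\bar D(0,r)$, with the remaining variables held on $\bar D(0,r)$. Iteratively applying the Poisson (mean value) formula then collapses each inner integral, replacing $u_i$ by $\kappa r e^{2\pi\sqrt{-1}x_i}$, and produces $(N+1)^m\Xi$ with $\Xi$ as in (\ref{Xi}). Combining the two bounds gives the claim.

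The main obstacle, such as it is, is purely bookkeeping: one must verify that the harmonicity argument for $II$ really does apply to all $m$ variables successively (which uses separate harmonicity of $\log|\tilde s_N|$ in each variable away from the zero set, together with continuity up to the boundary $(\pa D(0,r))^m$), and that the exceptional event on which the bound for $I$ fails is the same one coming from Lemma \ref{lem(int)} so that the probability bound $e^{-e^N}+e^{-Q_{\kappa r,m}(N)}$ is preserved. The analytic content is already packaged in the previous lemmas, so the proof is essentially an assembly.
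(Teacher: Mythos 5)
Your proposal matches the paper's proof step for step: the plurisubharmonic Poisson--integral bound giving the decomposition into $I+II$, the control of $I$ via Lemmas \ref{lem(int)} and \ref{lem(max)} (which is the sole source of the exceptional event of probability at most $e^{-e^N}+e^{-Q_{\kappa r,m}(N)}$), and the collapse of $II$ to $(N+1)^m\Xi$ by Fubini and the iterated mean-value property for the separately harmonic function $\log|\tilde s_N|$ when $\tilde s_N$ is zero-free. This is the same argument the paper gives, assembled in the same order.
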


Replace $\Ga_{m,N}=\{J=(j_1,\dots,j_m)\in[0,N]^m\cap\Z^m:\ 0\leq j_1\leq\dots\leq j_m\leq N\}$ by $\Ga^{(\varrho)}_{m,N}=\{J=(j_1,\dots,j_m)\in[0,N]^m\cap\Z^m:\ 0\leq j_{\varrho(1)}\leq\dots\leq j_{\varrho(m)}\leq N\}$, where $\varrho$ can be any element in $S_m$, the permutation group of $m$ letters. Then similar results hold and we have counterparts for Lemma \ref{lem(detSi)} and Lemma \ref{lem(logProdzeta)}, which we state without proof.

\begin{lem}\label{lem(detSi')}
Denote the covariance matrix of the random vector $(\zeta^{(\varrho)}_J=\tilde{s}_N(\xi_J))^t_{J\in\Ga^{(\varrho)}_{m,N}}$ by $\Si^{(\varrho)}$. Then $\log{(\det{\Si^{(\varrho)}})}=Q_{\kappa r,m}(N)+\frac{2\be_m}{p}N^{m+1}+o(N^{m+1})$.
\end{lem}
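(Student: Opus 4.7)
The plan is to mimic the proof of Lemma \ref{lem(detSi)} almost verbatim, reducing the computation of $\log \det \Si^{(\varrho)}$ to the asymptotics of $\det W_{m,N}(\xi)$ already established in Lemma \ref{lem(detW1)}. First I would factor $\Si^{(\varrho)} = V^{(\varrho)}_{m,N}(\xi) V^{(\varrho)}_{m,N}(\xi)^*$, where $V^{(\varrho)}_{m,N}(\xi)$ is the $\binom{N+m}{m} \times \binom{N+m}{m}$ matrix with entries $\sqrt{\binom{N}{K}}\xi^K_J$, rows indexed by $J \in \Ga^{(\varrho)}_{m,N}$ and columns by $K \in \La_{m,N}$. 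This factorization is identical in form to the one in Lemma \ref{lem(detSi)}, coming from the Gaussian moment identity $\E_N(\zeta^{(\varrho)}_J \overline{\zeta^{(\varrho)}_{J'}}) = (1 + \xi_J\cdot\bar{\xi}_{J'})^N$ expanded via the multinomial theorem. It yields
\begin{align*}
\det \Si^{(\varrho)} = \prod_{K \in \La_{m,N}} \binom{N}{K} \cdot |\det W^{(\varrho)}_{m,N}(\xi)|^2,
\end{align*}
where $W^{(\varrho)}_{m,N}(\xi)$ has the same row-column index structure as $V^{(\varrho)}_{m,N}(\xi)$ and entries $\xi^K_J$.

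The main step, and the only point of substance beyond Lemma \ref{lem(detSi)}, is to show $|\det W^{(\varrho)}_{m,N}(\xi)| = |\det W_{m,N}(\xi)|$ for the specific assignment given by table (\ref{table}). The crucial observation is that in that assignment $\xi_{i,j}$ does not depend on $i$: every row of the table uses the same sequence $z_0,\dots,z_N$ in the same order. Setting $\xi_j := \xi_{1,j} = \cdots = \xi_{m,j}$, the entry becomes $[W^{(\varrho)}_{m,N}(\xi)]_{J,K} = \prod_i \xi_{j_i}^{k_i}$. The coordinate-permutation bijection $J = (j_1,\dots,j_m) \in \Ga^{(\varrho)}_{m,N} \mapsto \tilde J := (j_{\varrho(1)},\dots,j_{\varrho(m)}) \in \Ga_{m,N}$, paired with the bijection $K \mapsto \tilde K := (k_{\varrho(1)},\dots,k_{\varrho(m)})$ of $\La_{m,N}$ to itself (well-defined because $|\tilde K| = |K|$), then satisfies $[W^{(\varrho)}_{m,N}(\xi)]_{J,K} = [W_{m,N}(\xi)]_{\tilde J,\tilde K}$. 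Hence $W^{(\varrho)}_{m,N}(\xi)$ differs from $W_{m,N}(\xi)$ only by a row and column permutation, and the two determinants agree up to sign.

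Once this identification is in hand, Lemma \ref{lem(detW1)} immediately gives $\log|\det W^{(\varrho)}_{m,N}(\xi)| = m\binom{N+m}{m+1}\log(\kappa r) + \frac{\be_m}{p} N^{m+1} + o(N^{m+1})$. Substituting into the factorization formula and using $\sum_{K \in \La_{m,N}} \log\binom{N}{K} + 2m\binom{N+m}{m+1}\log(\kappa r) = Q_{\kappa r,m}(N)$, exactly as in the final lines of the proof of Lemma \ref{lem(detSi)}, delivers the claimed expansion. I expect no technical obstacle: the heavy lifting has already been done in Lemma \ref{lem(detW1)}, and the permutation-invariance argument is just a bookkeeping check enabled by the high symmetry of the assignment in (\ref{table}).
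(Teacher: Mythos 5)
Your proof is correct and fills in exactly the detail the paper leaves implicit (the paper simply states this lemma "without proof" as a counterpart of Lemma \ref{lem(detSi)}). The key observation you supply---that $\xi_{i,j}$ is independent of $i$ in the assignment (\ref{table}), so the coordinate permutation $J\mapsto(j_{\varrho(1)},\dots,j_{\varrho(m)})$ on rows and $K\mapsto(k_{\varrho(1)},\dots,k_{\varrho(m)})$ on columns identifies $W^{(\varrho)}_{m,N}(\xi)$ with $W_{m,N}(\xi)$ up to sign---is precisely why the argument of Lemma \ref{lem(detSi)} transfers verbatim, and everything else is the same factorization and substitution into Lemma \ref{lem(detW1)}.
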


$\forall \varrho\in S_m$, denote
\begin{align*}
\begin{split}
H^{(\varrho)}
&=\di\bigcup_{0\leq t_1,\dots,t_m\leq p-1}H^{(\varrho)}_{t_1,\dots,t_m} \\
&:=\di\bigcup_{0\leq t_1,\dots,t_m\leq p-1}\{x=(x_1,\dots,x_m)\in\R^m:\ 0\leq x_{\varrho(1)}-\frac{t_{\varrho(1)}}{p}\leq\dots\leq
x_{\varrho(m)}-\frac{t_{\varrho(m)}}{p}\leq\frac{1}{p}\}
\end{split}
\end{align*}
and the random variable
\begin{align*}
\Xi^{(\varrho)}=\di\int_{H^{(\varrho)}}\log{\abs{\tilde{s}_N(\kappa re^{2\pi\sqrt{-1}x_1},\dots,\kappa re^{2\pi\sqrt{-1}x_m})}}\ d_mx.
\end{align*}
Then
\begin{lem}\label{lem(logProdzeta')}
If $\tilde{s}_N$ is nonvanishing on $(\bar{D}(0,r))^m$, then outside an event of probability at most $e^{-e^N}+e^{-Q_{\kappa r,m}(N)}$,
\begin{align*}
\log{\di\prod_{J\in\Ga^{(\varrho)}_{m,N}}\abs{\zeta^{(\varrho)}_J}}\leq\frac{o(N^{m+1})}{\de^{\frac{3}{2}m+\half}}+(N+1)^m\Xi^{(\varrho)}.
\end{align*}
\end{lem}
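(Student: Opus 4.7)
The plan is to repeat the proof of Lemma \ref{lem(logProdzeta)} verbatim, relying on the fact that the Poisson kernel product $\prod_{i=1}^mP_r(\xi_{i,j_i},u_i)$ and the assignment rule (\ref{table}) (which is the same for every coordinate direction $i$) are invariant under any joint permutation of the $m$ coordinate directions. Concretely, the plurisubharmonicity of $\log\abs{\tilde s_N}$ on a neighborhood of $(\bar D(0,r))^m$ (the standing hypothesis) yields
\begin{align*}
\log\abs{\tilde s_N(\xi_J)}\leq\di\int_{\pa D(0,r)}\cdots\di\int_{\pa D(0,r)}\log\abs{\tilde s_N(u)}\di\prod_{i=1}^mP_r(\xi_{i,j_i},u_i)\ d\si_r(u_1)\cdots d\si_r(u_m)
\end{align*}
for every $J\in\Ga^{(\varrho)}_{m,N}$. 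Summing over such $J$ and inserting/subtracting the integral over $H^{(\varrho)}$ produces a decomposition $I^{(\varrho)}+II^{(\varrho)}$ exactly parallel to (\ref{ineq2}).

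For $I^{(\varrho)}$, the estimate from Lemma \ref{lem(int)} on $\di\int_{\pa D(0,r)}\cdots\di\int_{\pa D(0,r)}\Abs{\log\abs{\tilde s_N(u)}}\ d\si_r(u_1)\cdots d\si_r(u_m)$ is independent of the index set and transfers unchanged, producing the same exceptional event of probability at most $e^{-e^N}+e^{-Q_{\kappa r,m}(N)}$. The counterpart of Lemma \ref{lem(max)} uses the partition of $H^{(\varrho)}$ into the sub-simplices $H^{(\varrho)}_{t_1,\dots,t_m}$, and the argument proceeds verbatim from the same pointwise Lipschitz estimate on the Poisson kernel product, together with the $S_m$-symmetric analog of Lemma \ref{lem(detW2)}, namely $\vol_{\R^m}(H^{(\varrho)}_{t_1,\dots,t_m}(N)\ \triangle\ H^{(\varrho)}_{t_1,\dots,t_m})\to 0$ as $N\to\infty$. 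Hence, outside the same exceptional event, one obtains $I^{(\varrho)}\leq o(N^{m+1})/\de^{\frac{3}{2}m+\half}$.

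For $II^{(\varrho)}$, the iterated mean value argument goes through without change: since $\tilde s_N$ is nonvanishing on $(\bar D(0,r))^m$, $\log\abs{\tilde s_N}$ is harmonic in each variable separately on a neighborhood of $\bar D(0,r)$, so integrating the Poisson kernel in each $u_i$ successively recovers the value at $\kappa re^{2\pi\sqrt{-1}x_i}$, yielding $II^{(\varrho)}=(N+1)^m\Xi^{(\varrho)}$. The main (and only) bookkeeping obstacle is checking that the change of index set $\Ga_{m,N}\to\Ga^{(\varrho)}_{m,N}$ in the Riemann sum precisely induces the change $H\to H^{(\varrho)}$ of limiting region; this is immediate because the bijection $\tau$ and the assignment table are the same for every coordinate direction, so every step of the proof is $S_m$-equivariant and no new estimate is required.
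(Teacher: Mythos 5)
Your proposal is correct and matches the paper's intent: the paper explicitly \emph{omits} the proof of this lemma, stating that "similar results hold and we have counterparts for Lemma \ref{lem(detSi)} and Lemma \ref{lem(logProdzeta)}, which we state without proof," i.e.\ it appeals to exactly the $S_m$-symmetry you articulate. You have correctly identified the three ingredients that transfer verbatim (plurisubharmonic sub-mean value bound for every $J$, the $\varrho$-independent exceptional event from Lemma \ref{lem(int)} since that estimate only involves $\tilde s_N$ on the distinguished boundary and not the index set, and the iterated mean value argument for $II^{(\varrho)}$), and the one place where new bookkeeping is in principle needed (that the Riemann sum over $\Ga^{(\varrho)}_{m,N}$ converges to integration over $H^{(\varrho)}$, via the $\varrho$-analog of Lemma \ref{lem(detW2)} and Lemma \ref{lem(max)}).
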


If $\tilde{s}_N$ is nonvanishing on $(\bar{D}(0,r))^m$,
\begin{align*}
\begin{split}
\di\sum_{\varrho\in S_m}\Xi^{(\varrho)}
=&\di\sum_{\varrho\in S_m}\di\int_{H^{(\varrho)}}\log\abs{\tilde{s}_N(\kappa re^{2\pi\sqrt{-1}x_1},\dots,\kappa re^{2\pi\sqrt{-1}x_m})}\ d_mx \\
=&\di\int_{\di\bigcup_{\varrho\in S_m}H^{(\varrho)}}\log\abs{\tilde{s}_N(\kappa re^{2\pi\sqrt{-1}x_1},\dots,\kappa re^{2\pi\sqrt{-1}x_m})}\ d_mx \\
=&\di\int_0^1\cdots\int_0^1\log\abs{\tilde{s}_N(\kappa re^{2\pi\sqrt{-1}x_1},\dots,\kappa re^{2\pi\sqrt{-1}x_m})}\ dx_1\cdots dx_m \\
=&\di\int_{\pa D(0,\kappa r)}\cdots\int_{\pa D(0,\kappa r)}\log\abs{\tilde{s}_N(\om_1,\dots,\om_m)}\ d\si_{\kappa r}(\om_1)\cdots d\si_{\kappa r}(\om_m) \\
=&\log\abs{\tilde{s}_N(0,\dots,0)} \\
=&\log\abs{c_{(0,\dots,0)}},
\end{split}
\end{align*}
the second equality holds because for distinct $\varrho_1, \varrho_2\in S_m$, $H^{(\varrho_1)}\cap H^{(\varrho_2)}$ is of $m$-dimensional Lebesgue measure zero.
\begin{proof}[Proof of the upper bound in Theorem \ref{main2}]
\begin{align*}
\begin{split}
P_{0,m}(r,N)
=&\gamma_N\{0\not\in\tilde{s}_N\big((\bar{D}(0,r))^m\big)\} \\
=&\gamma_N\big\{(\log\abs{c_{(0,\dots,0)}}>2m!\log{N})\cap\Big(0\not\in\tilde{s}_N\big((\bar{D}(0,r))^m\big)\Big)\big\} \\
&+\gamma_N\big\{(\log\abs{c_{(0,\dots,0)}}\leq 2m!\log{N})\cap\Big(0\not\in\tilde{s}_N\big((\bar{D}(0,r))^m\big)\Big)\big\} \\
\leq&\gamma_N(\abs{c_{(0,\dots,0)}}>N^{2m!})+\gamma_N\big\{(\di\sum_{\varrho\in S_m}\Xi^{(\varrho)}\leq 2m!\log{N})\cap\Big(0\not\in\tilde{s}_N\big((\bar{D}(0,r))^m\big)\Big)\big\} \\
\leq&e^{-N^{4m!}}+\gamma_N\big\{\di\bigcup_{\varrho\in S_m}(\Xi^{(\varrho)}\leq 2\log{N})\cap\Big(0\not\in\tilde{s}_N\big((\bar{D}(0,r))^m\big)\Big)\big\} \\
\leq&e^{-N^{4m!}}+\di\sum_{\varrho\in S_m}\gamma_N\big\{(\Xi^{(\varrho)}\leq 2\log{N})\cap\Big(0\not\in\tilde{s}_N\big((\bar{D}(0,r))^m\big)\Big)\big\}.
\end{split}
\end{align*}
Lemma \ref{lem(logProdzeta)} implies
\begin{align*}
\begin{split}
&\gamma_N\{(\Xi\leq 2\log{N})\cap\Big(0\not\in\tilde{s}_N\big((\bar{D}(0,r))^m\big)\Big)\} \\
\leq&e^{-e^N}+e^{-Q_{\kappa r,m}(N)}+\gamma_N\{\log{\di\prod_{J\in\Ga_{m,N}}\abs{\zeta_J}}\leq\frac{o(N^{m+1})}{\de^{\frac{3}{2}m+\half}}+2(N+1)^m\log{N}\} \\
=&e^{-e^N}+e^{-Q_{\kappa r,m}(N)}
+\gamma_N\big\{\di\prod_{J\in\Ga_{m,N}}\abs{\zeta_J}\leq\exp\{\frac{o(N^{m+1})}{\de^{\frac{3}{2}m+\half}}+2(N+1)^m\log{N}\}\big\}.
\end{split}
\end{align*}
Denote
\begin{align*}
\ecal_{m,N}=\bigg\{\zeta=(\zeta_J)_{J\in\Ga_{m,N}}\in\C^{\binom{N+m}{m}}:
\di\prod_{J\in\Ga_{m,N}}\abs{\zeta_J}\leq\exp\Big\{\frac{o(N^{m+1})}{\de^{\frac{3}{2}m+\half}}+2(N+1)^m\log{N}\Big\}\bigg\},
\end{align*}
and
\begin{align*}
\fcal_{m,N}=\big\{\zeta=(\zeta_J)_{J\in\Ga_{m,N}}\in\ecal_{m,N}:\abs{\zeta_J}\leq(2+2mr^2)^{\frac{N}{2}},\ \forall\ J\in\Ga_{m,N}\big\}\subset\ecal_{m,N},
\end{align*}
both of which can be treated as subsets in $\C^{\binom{N+m}{m}}$ and events in the probability space $\big(H^0(\CP^m,\ocal(N)),\gamma_N\big)$.
Thus,
\begin{align}\label{ineq11}
\begin{split}
\gamma_N\big\{(\Xi\leq 2\log{N})\cap\Big(0\not\in\tilde{s}_N\big((\bar{D}(0,r))^m\big)\Big)\big\}
\leq&e^{-e^N}+e^{-Q_{\kappa r,m}(N)}+\gamma_N(\ecal_{m,N}) \\
\leq&e^{-e^N}+e^{-Q_{\kappa r,m}(N)}+\gamma_N(\ecal_{m,N}\setminus\fcal_{m,N})+\gamma_N(\fcal_{m,N}).
\end{split}
\end{align}
\begin{align}\label{ineq(E-F)}
\begin{split}
\gamma_N(\ecal_{m,N}\setminus\fcal_{m,N})
\leq&\gamma_N\big\{\abs{\zeta_J}>(2+2mr^2)^{\frac{N}{2}}\text{ for some }J\in\Ga_{m,N}\big\} \\
\leq&\gamma_N\big\{\di\sup_{\om\in(\pa D(0,\kappa r))^m}\abs{\tilde{s}_N(\om)}>(2+2mr^2)^{\frac{N}{2}}\big\} \\
\leq&\gamma_N\big\{\di\sup_{\om\in(\bar{D}(0,r))^m}\abs{\tilde{s}_N(\om)}>(1+mr^2)^{\frac{N}{2}}2^{\frac{N}{2}}\big\} \\
\leq&e^{-2^{\frac{N}{2}}},
\end{split}
\end{align}
where the last inequality is due to Lemma \ref{lem(sups_N)}.
\begin{align*}
\begin{split}
\gamma_N(\fcal_{m,N})
=&\frac{1}{\pi^{\binom{N+m}{m}}\det{\Si}}\di\int_{\fcal_{m,N}}e^{-\zeta^*\Si^{-1}\zeta}\ d_{2\binom{N+m}{m}}\zeta \\
\leq&\exp\Big\{-\big[Q_{\kappa r,m}(N)+\frac{2\be_m}{p}N^{m+1}\big]+o(N^{m+1})\Big\}\pi^{-\binom{N+m}{m}}\vol_{\C^{\binom{N+m}{m}}}(\fcal_{m,N})
\end{split}
\end{align*}
by Lemma \ref{lem(detSi)}. Change into polar coordinates and denote
\begin{align*}
\begin{split}
&\vol_{\R^{\binom{N+m}{m}}}(\fcal_{m,N}) \\
=&\vol_{\R^{\binom{N+m}{m}}}\Big\{(x_J)_{J\in\Ga_{m,N}}\in\big[0,(2+2mr^2)^{\frac{N}{2}}\big]^{\binom{N+m}{m}}:
\di\prod_{J\in\Ga_{m,N}}x_J\leq\exp\big\{\frac{o(N^{m+1})}{\de^{\frac{3}{2}m+\half}}+2(N+1)^m\log{N}\big\}\Big\},
\end{split}
\end{align*}
\begin{align*}
\begin{split}
\Rightarrow &\gamma_N(\fcal_{m,N}) \\
\leq&2^{\binom{N+m}{m}}\exp\Big\{-\big[Q_{\kappa r,m}(N)+\frac{2\be_m}{p}N^{m+1}\big]+o(N^{m+1})\Big\}
\exp\big\{\frac{o(N^{m+1})}{\de^{\frac{3}{2}m+\half}}+2(N+1)^m\log{N}\big\}\vol_{\R^{\binom{N+m}{m}}}(\fcal_{m,N}) \\
=&2^{\binom{N+m}{m}}\exp\Big\{-\big[Q_{\kappa r,m}(N)+\frac{2\be_m}{p}N^{m+1}\big]+\frac{o(N^{m+1})}{\de^{\frac{3}{2}m+\half}}\Big\}\vol_{\R^{\binom{N+m}{m}}}(\fcal_{m,N}).
\end{split}
\end{align*}
Since $\binom{N+m}{m}\frac{N}{2}\log(2+2mr^2)-\big[\frac{o(N^{m+1})}{\de^{\frac{3}{2}m+\half}}+2(N+1)^m\log{N}\big]>\binom{N+m}{m}$ for $N$ large(up to now $p,\de$ are constants), we can apply Lemma 4.6 in \cite{N1} and get:
\begin{align*}
\begin{split}
&\vol_{\R^{\binom{N+m}{m}}}(\fcal_{m,N}) \\
\leq&\frac{\exp\Big\{\frac{o(N^{m+1})}{\de^{\frac{3}{2}m+\half}}+2(N+1)^m\log{N}\Big\}}{[\binom{N+m}{m}-1]!}
\Big\{\binom{N+m}{m}\frac{N}{2}\log(2+2mr^2)-\big[\frac{o(N^{m+1})}{\de^{\frac{3}{2}m+\half}}+2(N+1)^m\log{N}\big]\Big\}^{\binom{N+m}{m}} \\
\leq&\frac{\exp\Big\{\frac{o(N^{m+1})}{\de^{\frac{3}{2}m+\half}}+2(N+1)^m\log{N}\Big\}}{2^{\binom{N+m}{m}}[\binom{N+m}{m}-1]!}
\big[N\binom{N+m}{m}\log(2+2mr^2)\big]^{\binom{N+m}{m}}
\end{split}
\end{align*}
\begin{align*}
\begin{split}
\Rightarrow \gamma_N(\fcal_{m,N})
\leq&\frac{\exp\Big\{\frac{o(N^{m+1})}{\de^{\frac{3}{2}m+\half}}+2(N+1)^m\log{N}-[Q_{\kappa r,m}(N)+\frac{2\be_m}{p}N^{m+1}]\Big\}}
{[\binom{N+m}{m}-1]!} \\
&\times\big[N\binom{N+m}{m}\log(2+2mr^2)\big]^{\binom{N+m}{m}},
\end{split}
\end{align*}
\begin{align}\label{ineq(F)}
\begin{split}
\Rightarrow\log{\gamma_N(\fcal_{m,N})}
\leq&\frac{o(N^{m+1})}{\de^{\frac{3}{2}m+\half}}+2(N+1)^m\log{N}-[Q_{\kappa r,m}(N)+\frac{2\be_m}{p}N^{m+1}] \\
&+\binom{N+m}{m}\log{\big[N\binom{N+m}{m}\log(2+2mr^2)\big]}-\log{\big[\binom{N+m}{m}-1\big]!} \\
&=-Q_{\kappa r,m}(N)-\frac{2\be_m}{p}N^{m+1}+\frac{o(N^{m+1})}{\de^{\frac{3}{2}m+\half}}.
\end{split}
\end{align}
By Lemma \ref{lem(Q)}, (\ref{ineq11}), (\ref{ineq(E-F)}) and (\ref{ineq(F)}),
\begin{align*}
\begin{split}
&\gamma_N\big\{(\Xi\leq 2\log{N})\cap\Big(0\not\in\tilde{s}_N\big((\bar{D}(0,r))^m\big)\Big)\big\} \\
\leq&e^{-e^N}+e^{-Q_{\kappa r,m}(N)}+e^{-2^{\frac{N}{2}}}+\exp\big\{-Q_{\kappa r,m}(N)-\frac{2\be_m}{p}N^{m+1}+\frac{o(N^{m+1})}{\de^{\frac{3}{2}m+\half}}\big\} \\
\leq&\exp\Big\{-\min\big\{\frac{2m\log(\kappa r)}{(m+1)!}+\frac{1}{m!}\di\sum_{k=2}^{m+1}\frac{1}{k},\frac{2m\log(\kappa r)}{(m+1)!}+\frac{1}{m!}\di\sum_{k=2}^{m+1}\frac{1}{k}+\frac{2\be_m}{p}\big\}N^{m+1}+\frac{o(N^{m+1})}{\de^{\frac{3}{2}m+\half}}\Big\}.
\end{split}
\end{align*}
Similarly, $\forall\ \varrho\in S_m$,
\begin{align*}
&\gamma_N\big\{(\Xi^{(\varrho)}\leq 2\log{N})\cap\Big(0\not\in\tilde{s}_N\big((\bar{D}(0,r))^m\big)\Big)\big\} \\
\leq&\exp\Big\{-\min\big\{\frac{2m\log(\kappa r)}{(m+1)!}+\frac{1}{m!}\di\sum_{k=2}^{m+1}\frac{1}{k},\frac{2m\log(\kappa r)}{(m+1)!}+\frac{1}{m!}\di\sum_{k=2}^{m+1}\frac{1}{k}+\frac{2\be_m}{p}\big\}N^{m+1}+\frac{o(N^{m+1})}{\de^{\frac{3}{2}m+\half}}\Big\},
\end{align*}
\begin{align*}
\Rightarrow&P_{0,m}(r,N) \\
\leq&e^{-N^{4m!}}
+m!\exp\Big\{-\min\big\{\frac{2m\log(\kappa r)}{(m+1)!}+\frac{1}{m!}\di\sum_{k=2}^{m+1}\frac{1}{k},\frac{2m\log(\kappa r)}{(m+1)!}+\frac{1}{m!}\di\sum_{k=2}^{m+1}\frac{1}{k}+\frac{2\be_m}{p}\big\}N^{m+1}+\frac{o(N^{m+1})}{\de^{\frac{3}{2}m+\half}}\Big\} \\
=&\exp\Big\{-\min\big\{\frac{2m\log(\kappa r)}{(m+1)!}+\frac{1}{m!}\di\sum_{k=2}^{m+1}\frac{1}{k},\frac{2m\log(\kappa r)}{(m+1)!}+\frac{1}{m!}\di\sum_{k=2}^{m+1}\frac{1}{k}+\frac{2\be_m}{p}\big\}N^{m+1}+\frac{o(N^{m+1})}{\de^{\frac{3}{2}m+\half}}\Big\}, \\
\Rightarrow&\log{P_{0,m}(r,N)}\leq
-\min\big\{\frac{2m\log(\kappa r)}{(m+1)!}+\frac{1}{m!}\di\sum_{k=2}^{m+1}\frac{1}{k},\frac{2m\log(\kappa r)}{(m+1)!}+\frac{1}{m!}\di\sum_{k=2}^{m+1}\frac{1}{k}+\frac{2\be_m}{p}\big\}N^{m+1}+\frac{o(N^{m+1})}{\de^{\frac{3}{2}m+\half}}, \\
\Rightarrow&\limsup_{N\to\infty}\frac{\log{P_{0,m}(r,N)}}{N^{m+1}}\leq-\min\big\{\frac{2m\log(\kappa r)}{(m+1)!}+\frac{1}{m!}\di\sum_{k=2}^{m+1}\frac{1}{k},\frac{2m\log(\kappa r)}{(m+1)!}+\frac{1}{m!}\di\sum_{k=2}^{m+1}\frac{1}{k}+\frac{2\be_m}{p}\big\}.
\end{align*}
Let $p\to\infty$, then
\begin{align*}
\Rightarrow\limsup_{N\to\infty}\frac{\log{P_{0,m}(r,N)}}{N^{m+1}}\leq-\big[\frac{2m\log(\kappa r)}{(m+1)!}+\frac{1}{m!}\di\sum_{k=2}^{m+1}\frac{1}{k}\big].
\end{align*}
Let $\de\to0+$, then $\kappa=1-\sqrt{\de}\to1$,
\begin{align*}
\Rightarrow\limsup_{N\to\infty}\frac{\log{P_{0,m}(r,N)}}{N^{m+1}}\leq-\big[\frac{2m\log{r}}{(m+1)!}+\frac{1}{m!}\di\sum_{k=2}^{m+1}\frac{1}{k}\big].
\end{align*}
\begin{align*}
\Rightarrow\log{P_{0,m}(r,N)}\leq-\big[\frac{2m\log{r}}{(m+1)!}+\frac{1}{m!}\di\sum_{k=2}^{m+1}\frac{1}{k}\big]N^{m+1}+o(N^{m+1}).
\end{align*}
\end{proof}

\section{Proof of Theorem \ref{main1}}
\subsection{Lower bound}
\begin{definition}
\begin{align*}
\begin{split}
\La_{m,N}(r)&:=\Big\{K\in\La_{m,N}:\ \binom{N}{K}r^{2\abs{K}}\geq1\Big\}\subset\La_{m,N}, \\
R_{r,m}(N)&:=\di\sum_{K\in\La_{m,N}(r)}\log{\big[\binom{N}{K}r^{2\abs{K}}\big]}.
\end{split}
\end{align*}
\end{definition}
\begin{lem}\label{lem(lowerbound)}
$\log{P_{0,m}(r,N)}\geq-R_{r,m}(N)+o(N^{m+1})$.
\end{lem}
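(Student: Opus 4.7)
The plan is to adapt the construction used in the lower bound of Theorem \ref{main2} to the general $r$ setting, splitting the index set $\La_{m,N}$ into the two regimes $\La_{m,N}(r)$ and $\La_{m,N}\setminus\La_{m,N}(r)$, and choosing a different pointwise bound on $|c_K|$ in each regime so that the resulting probability picks up only the contribution $R_{r,m}(N)$ rather than $Q_{r,m}(N)$.

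First I would define the event $\Om_{r,m,N}$ by:
\begin{itemize}
\item $(i)$ $\abs{c_{(0,\dots,0)}}\geq\sqrt{N}$,
\item $(ii)$ $\abs{c_K}\leq\frac{1}{2\sqrt{N}\sqrt{\binom{N}{K}}r^{\abs{K}}\binom{\abs{K}+m-1}{m-1}}$ for $K\in\La_{m,N}(r)\setminus\{(0,\dots,0)\}$,
\item $(iii)$ $\abs{c_K}\leq\frac{1}{2\sqrt{N}\binom{\abs{K}+m-1}{m-1}}$ for $K\in\La_{m,N}\setminus\La_{m,N}(r)$.
\end{itemize}
Then I would verify, exactly as in the lower bound of Theorem \ref{main2}, that on $\Om_{r,m,N}$ and for any $z\in(\bar D(0,r))^m$,
\begin{align*}
\abs{\tilde s_N(z)}\geq\sqrt{N}-\sum_{K\neq(0,\dots,0)}\abs{c_K}\sqrt{\binom{N}{K}}r^{\abs{K}}\geq\sqrt{N}-\sum_{K\neq(0,\dots,0)}\frac{1}{2\sqrt{N}\binom{\abs{K}+m-1}{m-1}}=\half\sqrt{N}>0,
\end{align*}
using in regime $(ii)$ the trivial cancellation of $\sqrt{\binom{N}{K}}r^{\abs{K}}$ and in regime $(iii)$ the bound $\sqrt{\binom{N}{K}}r^{\abs{K}}<1$ coming from the definition of $\La_{m,N}(r)$. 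In particular, $\tilde s_N$ has no zero in $(\bar D(0,r))^m$ on this event, so $P_{0,m}(r,N)\geq\gamma_N(\Om_{r,m,N})$.

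Next I would estimate $\gamma_N(\Om_{r,m,N})$ as a product, using independence. In both regimes the upper bound on $\abs{c_K}$ does not exceed $1$ (because $\binom{N}{K}r^{2\abs{K}}\geq1$ in $(ii)$), so the elementary inequality $\gamma_N(\abs{c}\leq a)\geq a^2/2$ applies. This yields
\begin{align*}
-\log\gamma_N(\Om_{r,m,N})\leq N+\!\!\sum_{K\in\La_{m,N}(r)\setminus\{(0,\dots,0)\}}\!\!\Big[\log 8+\log N+2\log\binom{\abs{K}+m-1}{m-1}+\log\big(\binom{N}{K}r^{2\abs{K}}\big)\Big]+\!\!\sum_{K\notin\La_{m,N}(r)}\!\!\Big[\log 8+\log N+2\log\binom{\abs{K}+m-1}{m-1}\Big].
\end{align*}
Since the $K=(0,\dots,0)$ term contributes $\log 1=0$, the sum of $\log[\binom{N}{K}r^{2\abs{K}}]$ over $\La_{m,N}(r)\setminus\{(0,\dots,0)\}$ equals $R_{r,m}(N)$. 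The remaining $O(\log N)$-per-index contributions sum to $O(\binom{N+m}{m}\log N)=o(N^{m+1})$, exactly as in the proof of the lower bound in Theorem \ref{main2}, and the $N$ term is also $o(N^{m+1})$. Combining gives $\log P_{0,m}(r,N)\geq-R_{r,m}(N)+o(N^{m+1})$.

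The proof is a direct adaptation of the one for $r\geq1$; the only conceptual step is the observation that when $K\notin\La_{m,N}(r)$ the monomial $\sqrt{\binom{N}{K}}r^{\abs{K}}$ is already small, so one may relax the constraint on $\abs{c_K}$ without endangering the positivity argument, and this relaxation is precisely what removes these indices from the entropy cost. I do not foresee a serious obstacle: the split is essentially tight, and the combinatorial identity $\sum_{K:\abs{K}=k}1=\binom{k+m-1}{m-1}$ used in the original lower bound continues to absorb the $K\notin\La_{m,N}(r)$ terms at negligible cost $o(N^{m+1})$.
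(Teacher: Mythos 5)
Your proposal is correct and matches the paper's proof of Lemma \ref{lem(lowerbound)} essentially verbatim: the same event $\Om_{r,m,N}$ with the split into $\La_{m,N}(r)$ and its complement, the same positivity check, the same elementary bound $\gamma_N(|c|\leq a)\geq a^2/2$, and the same identification of the surviving term as $R_{r,m}(N)$ plus an $O(\binom{N+m}{m}\log N)=o(N^{m+1})$ remainder. No gap.
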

\begin{proof}
Consider the following event $\Om_{r,m,N}$:
\begin{align*}
\begin{split}
&(i)\ \abs{c_{(0,\dots,0)}}\geq\sqrt{N}, \\
&(ii)\ \abs{c_K}\leq\frac{1}{2\sqrt{N}\sqrt{\binom{N}{K}}r^{\abs{K}}\binom{\abs{K}+m-1}{m-1}},\ K\in\La_{m,N}(r)\backslash\{(0,\dots,0)\}, \\
&(iii)\ \abs{c_K}\leq\frac{1}{2\sqrt{N}\binom{\abs{K}+m-1}{m-1}},\ K\in\La_{m,N}\backslash\La_{m,N}(r).
\end{split}
\end{align*}
Then when $\Om_{r,m,N}$ occurs, $\forall\ z\in(\bar{D}(0,r))^m$,
\begin{align*}
\begin{split}
\abs{\tilde{s}_N(z)}
&\geq\sqrt{N}-\sum_{K\in\La_{m,N}(r)\backslash\{(0,\dots,0)\}}\frac{\sqrt{\binom{N}{K}}r^{\abs{K}}}{2\sqrt{N}\sqrt{\binom{N}{K}}r^{\abs{K}}\binom{\abs{K}+m-1}{m-1}}
-\sum_{K\in\La_{m,N}\backslash\La_{m,N}(r)}\frac{1}{2\sqrt{N}\binom{\abs{K}+m-1}{m-1}} \\
&=\sqrt{N}-\sum_{K\in\La_{m,N}\backslash\{(0,\dots,0)\}}\frac{1}{2\sqrt{N}\binom{\abs{K}+m-1}{m-1}} \\
&=\sqrt{N}-\sum_{k=1}^{N}\frac{1}{2\sqrt{N}} \\
&=\half\sqrt{N}>0.
\end{split}
\end{align*}
Thus,
\begin{align*}
\begin{split}
P_{0,m}(r,N)\geq&\gamma_N(\Om_{r,m,N}) \\
=&\gamma_N(\abs{c_{(0,\dots,0)}}\geq\sqrt{N})
\prod_{K\in\La_{m,N}(r)\backslash\{(0,\dots,0)\}}\gamma_N\bigg(\abs{c_K}\leq\frac{1}{2\sqrt{N}\sqrt{\binom{N}{K}}r^{\abs{K}}\binom{\abs{K}+m-1}{m-1}}\bigg) \\
&\times\prod_{K\in\La_{m,N}\backslash\La_{m,N}(r)}\gamma_N\bigg(\abs{c_K}\leq\frac{1}{2\sqrt{N}\binom{\abs{K}+m-1}{m-1}}\bigg) \\
\geq&e^{-N}
\prod_{K\in\La_{m,N}(r)\backslash\{(0,\dots,0)\}}\frac{1}{8N\binom{N}{K}r^{2\abs{K}}{\binom{\abs{K}+m-1}{m-1}}^2}
\prod_{K\in\La_{m,N}\backslash\La_{m,N}(r)}\frac{1}{8N{\binom{\abs{K}+m-1}{m-1}}^2},
\end{split}
\end{align*}
\begin{align*}
\begin{split}
\Rightarrow\log{P_{0,m}(r,N)}
\geq&-N-\di\sum_{K\in\La_{m,N}(r)\backslash\{(0,\dots,0)\}}\log{\big[\binom{N}{K}r^{2\abs{K}}\big]}
-\di\sum_{K\in\La_{m,N}(r)\backslash\{(0,\dots,0)\}}\log{\big[8N{\binom{\abs{K}+m-1}{m-1}}^2\big]} \\
=&-\di\sum_{K\in\La_{m,N}(r)\backslash\{(0,\dots,0)\}}\log{\big[\binom{N}{K}r^{2\abs{K}}\big]}+o(N^{m+1}) \\
=&-R_{r,m}(N)+o(N^{m+1}).
\end{split}
\end{align*}
\end{proof}

\subsection{Upper bound}

For some $\al\in(0,1]$, we can define the index sets $\La_{m,\lfloor\al N\rfloor}$, $\Ga_{m,\lfloor\al N\rfloor}$ and the $\binom{\lfloor\al N\rfloor+m}{m}\times\binom{\lfloor\al N\rfloor+m}{m}$ matrix
\begin{align*}
W_{m,\lfloor\al N\rfloor}(\xi)=(\xi_J^K)_{J\in\Ga_{m,\lfloor\al N\rfloor},\ K\in\La_{m,\lfloor\al N\rfloor}}.
\end{align*}
We also assign the values of the variables $(\xi_{i,j})_{0\leq i\leq m,\ 0\leq j\leq\lfloor\al N\rfloor}$ to be the points on $\pa D(0,\kappa r)$ in a way similar to Section 4 except that we replace $N$ by $\lfloor\al N\rfloor$. Then we have the following lemma.
\begin{lem}
\begin{align*}
\log\abs{\det{W_{m,\lfloor\al N\rfloor}(\xi)}}=m\binom{\lfloor\al N\rfloor+m}{m+1}\log{(\kappa r)}+\frac{\be_m}{p}(\lfloor\al N\rfloor)^{m+1}+o(N^{m+1}).
\end{align*}
\end{lem}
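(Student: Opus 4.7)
The statement is literally the conclusion of Lemma~\ref{lem(detW1)} with $N$ replaced by $M:=\lfloor\al N\rfloor$ throughout, modulo the translation of error terms from $o(M^{m+1})$ to $o(N^{m+1})$. So the plan is to reduce to Lemma~\ref{lem(detW1)} directly rather than redo its proof.

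First, I would note that since $\al\in(0,1]$, we have $M=\lfloor\al N\rfloor\to\infty$ as $N\to\infty$, and the setup of points $\xi_{i,j}$ for $0\le j\le M$ is by hypothesis obtained from the recipe of Section~4 with $M$ in place of $N$ (so $z_j(M)=\kappa r\,e^{2\pi\sqrt{-1}j/(M+1)}$, and $M+1=q'p+l'$ for suitable $q'\ge 0$, $0\le l'<p$, giving a table analogous to~(\ref{table})). Under this assignment the matrix $W_{m,M}(\xi)$ has exactly the structure analyzed in Lemma~\ref{lem(detW1)}.

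Next, applying Lemma~\ref{lem(detW1)} with $N\leftarrow M$ immediately yields
\begin{align*}
\log\abs{\det W_{m,M}(\xi)}
 = m\binom{M+m}{m+1}\log(\kappa r)+\frac{\be_m}{p}M^{m+1}+o(M^{m+1}).
\end{align*}
Since $M=\lfloor\al N\rfloor\le\al N\le N$, any sequence that is $o(M^{m+1})$ is also $o(N^{m+1})$, so the remainder can be absorbed into $o(N^{m+1})$. Substituting $M=\lfloor\al N\rfloor$ gives exactly the claimed identity.

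There is no real obstacle: the proof is a pure reduction. The only point worth flagging is that the constant $\be_m/p$ is the \emph{same} $p$ used in the assignment (not a rescaled one), which is automatic because Lemma~\ref{lem(detW1)} was proved for an arbitrary fixed $p\in\Z^+$ independent of the ``polynomial degree'' parameter. Thus one can simply cite Lemma~\ref{lem(detW1)} with $N$ replaced by $\lfloor\al N\rfloor$ and conclude.
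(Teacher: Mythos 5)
Your proposal is correct and takes precisely the same (implicit) route as the paper: in Section 5.2 the paper simply states this lemma after remarking that the assignment of $\xi$ is made "in a way similar to Section 4 except that we replace $N$ by $\lfloor\al N\rfloor$," i.e., the intended proof is exactly the citation of Lemma~\ref{lem(detW1)} with degree parameter $\lfloor\al N\rfloor$. Your observations that $\lfloor\al N\rfloor\to\infty$ (since $\al>0$), that $o((\lfloor\al N\rfloor)^{m+1})\subset o(N^{m+1})$ because $\lfloor\al N\rfloor\le N$, and that the partition integer $p$ is unchanged, are exactly the points needed to make the reduction rigorous.
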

$\zeta=(\zeta_J)^t_{J\in\Ga_{m,\lfloor\al N\rfloor}}=(\tilde{s}_N(\xi_J))^t_{J\in\Ga_{m,\lfloor\al N\rfloor}}$ is a dimension $\binom{\lfloor\al N\rfloor+m}{m}$ mean zero complex Gaussian random vector with covariance matrix
\begin{align*}
\Si=V_{m,N,\al}(\xi)V^*_{m,N,\al}(\xi),
\end{align*}
where $V_{m,N,\al}(\xi)=\big(\sqrt{\binom{N}{K}}\xi^K_J\big)_{J\in\Ga_{m,\lfloor\al N\rfloor},\ K\in\La_{m,N}}$ is an $\binom{\lfloor\al N\rfloor+m}{m}\times\binom{N+m}{m}$ matrix.
\begin{definition}
$Q_{r,m,\al}(N):=\di\sum_{K\in\La_{m,\lfloor\al N\rfloor}}\log{\big[\binom{N}{K}r^{2\abs{K}}\big]}$.
\end{definition}
\begin{lem}\label{lem(detSi'')}
$\log{\det{\Si}}\geq Q_{\kappa r,m,\al}(N)+\frac{2\be_m}{p}(\lfloor\al N\rfloor)^{m+1}+o(N^{m+1})$.
\end{lem}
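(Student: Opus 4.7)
The plan is to apply the Cauchy-Binet formula to the rectangular matrix $V_{m,N,\al}(\xi)$, which has $\binom{\lfloor\al N\rfloor+m}{m}$ rows (indexed by $\Ga_{m,\lfloor\al N\rfloor}$) and the larger number $\binom{N+m}{m}$ of columns (indexed by $\La_{m,N}$). Cauchy-Binet expresses $\det\Si = \det(V_{m,N,\al}V^*_{m,N,\al})$ as a sum over all $\binom{\lfloor\al N\rfloor+m}{m}$-element column subsets $S\subset\La_{m,N}$ of the nonnegative quantities $\abs{\det V_S}^2$. Since every summand is nonnegative, retaining any single one of them produces a valid lower bound.

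The natural choice is $S=\La_{m,\lfloor\al N\rfloor}\subset\La_{m,N}$. For this $S$ the square submatrix factors as $V_S = D\cdot W_{m,\lfloor\al N\rfloor}(\xi)$, where $D$ is the diagonal matrix with entries $\sqrt{\binom{N}{K}}$, $K\in\La_{m,\lfloor\al N\rfloor}$. Thus
\begin{align*}
\log\det\Si\ \geq\ \di\sum_{K\in\La_{m,\lfloor\al N\rfloor}}\log\binom{N}{K}\ +\ 2\log\abs{\det W_{m,\lfloor\al N\rfloor}(\xi)}.
\end{align*}

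Now I would invoke the preceding lemma on $\log\abs{\det W_{m,\lfloor\al N\rfloor}(\xi)}$, which gives its asymptotics as $m\binom{\lfloor\al N\rfloor+m}{m+1}\log(\kappa r)+\frac{\be_m}{p}(\lfloor\al N\rfloor)^{m+1}+o(N^{m+1})$. Using the hockey-stick identity $\di\sum_{K\in\La_{m,\lfloor\al N\rfloor}}\abs{K}=m\binom{\lfloor\al N\rfloor+m}{m+1}$ (established by exactly the double-counting argument that produced (\ref{eq2})), the deterministic $\log(\kappa r)$ term rewrites as $2\log(\kappa r)\di\sum_{K\in\La_{m,\lfloor\al N\rfloor}}\abs{K}$, which combines with the binomial sum to yield $\di\sum_{K\in\La_{m,\lfloor\al N\rfloor}}\log\big[\binom{N}{K}(\kappa r)^{2\abs{K}}\big]=Q_{\kappa r,m,\al}(N)$. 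This gives the claimed bound $Q_{\kappa r,m,\al}(N)+\frac{2\be_m}{p}(\lfloor\al N\rfloor)^{m+1}+o(N^{m+1})$.

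The argument is essentially bookkeeping once Cauchy-Binet is invoked, so I do not foresee a serious obstacle. The only point requiring attention is that the $o(N^{m+1})$ error from the $W_{m,\lfloor\al N\rfloor}$ lemma is measured against $N^{m+1}$ rather than $(\lfloor\al N\rfloor)^{m+1}$, which is precisely what the target statement needs; dropping all other Cauchy-Binet summands is harmless for a lower bound, which is why only $\geq$ (rather than an equality) is asserted in the lemma.
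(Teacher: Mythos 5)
Your proposal matches the paper's proof essentially step for step: apply Cauchy--Binet to $V_{m,N,\al}$, retain the single $\binom{\lfloor\al N\rfloor+m}{m}\times\binom{\lfloor\al N\rfloor+m}{m}$ minor indexed by $\La_{m,\lfloor\al N\rfloor}$, factor it as the diagonal of $\sqrt{\binom{N}{K}}$ times $W_{m,\lfloor\al N\rfloor}(\xi)$, invoke the preceding lemma on $\log\abs{\det W_{m,\lfloor\al N\rfloor}(\xi)}$, and recombine the $\log(\kappa r)$ term with the binomial sum via $\sum_{K\in\La_{m,\lfloor\al N\rfloor}}\abs{K}=m\binom{\lfloor\al N\rfloor+m}{m+1}$ to obtain $Q_{\kappa r,m,\al}(N)$. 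The argument is correct and the approach is the same as the paper's.
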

\begin{proof}
By Cauchy-Binet identity,
\begin{align*}
\begin{split}
\det{\Si}=&\di\sum_{M:\ \binom{\lfloor\al N\rfloor+m}{m}\times\binom{\lfloor\al N\rfloor+m}{m}\text{ minor of }V_{m,N,\al}(\xi)}\abs{\det{M}}^2 \\
      \geq&\Abs{\det{\big(\sqrt{\binom{N}{K}}\xi^K_J\big)_{J\in\Ga_{m,\lfloor\al N\rfloor},\ K\in\La_{m,\lfloor\al N\rfloor}}}}^2 \\
         =&\di\prod_{K\in\La_{m,\lfloor\al N\rfloor}}\binom{N}{K}\abs{\det{W_{m,\lfloor\al N\rfloor}(\xi)}}^2
\end{split}
\end{align*}
\begin{align*}
\begin{split}
\Rightarrow\log{\det{\Si}}
&\geq\di\sum_{K\in\La_{m,\lfloor\al N\rfloor}}\log{\binom{N}{K}}+2m\binom{\lfloor\al N\rfloor+m}{m+1}\log{(\kappa r)}+\frac{2\be_m}{p}(\lfloor\al N\rfloor)^{m+1}+o(N^{m+1}) \\
&=\di\sum_{K\in\La_{m,\lfloor\al N\rfloor}}\log{\big[\binom{N}{K}(\kappa r)^{2\abs{K}}\big]}+\frac{2\be_m}{p}(\lfloor\al N\rfloor)^{m+1}+o(N^{m+1}) \\
&=Q_{\kappa r,m,\al}(N)+\frac{2\be_m}{p}(\lfloor\al N\rfloor)^{m+1}+o(N^{m+1}).
\end{split}
\end{align*}
\end{proof}
The following lemma is a counterpart of Lemma \ref{lem(logProdzeta)}. The proof is similar.
\begin{lem}\label{lem(logProdzeta'')}
If $\tilde{s}_N$ is nonvanishing on $(\bar{D}(0,r))^m$, then outside an event of probability at most $e^{-e^N}+e^{-R_{\kappa r,m}(N)}$,
\begin{align*}
\log{\di\prod_{J\in\Ga_{m,\lfloor \al N\rfloor}}\abs{\zeta_J}}\leq\frac{o(N^{m+1})}{\de^{\frac{3}{2}m+\half}}+(\lfloor\al N\rfloor+1)^m\Xi,
\end{align*}
where the complex random variable $\Xi$ is defined in (\ref{Xi}).
\end{lem}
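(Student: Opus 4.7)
The plan is to transcribe the argument of Lemma \ref{lem(logProdzeta)} with only two modifications: the index set $\Ga_{m,N}$ is replaced by the smaller $\Ga_{m,\lfloor\al N\rfloor}$ (so the Riemann-sum normalization becomes $(\lfloor\al N\rfloor+1)^m$), and the Cauchy-type control on the exceptional event is sharpened from $e^{-Q_{\kappa r,m}(N)}$ to $e^{-R_{\kappa r,m}(N)}$. First, under the nonvanishing hypothesis, $\log|\tilde s_N|$ is pluriharmonic in a neighborhood of $(\bar D(0,r))^m$, so the multivariate Poisson representation gives, for each $J=(j_1,\dots,j_m)\in\Ga_{m,\lfloor\al N\rfloor}$,
\begin{align*}
\log|\zeta_J|=\int_{(\pa D(0,r))^m}\log|\tilde s_N(u)|\prod_{i=1}^m P_r(\xi_{i,j_i},u_i)\,d\si_r(u_1)\cdots d\si_r(u_m).
\end{align*}
Summing over $J$ and splitting as in (\ref{ineq2}), I would decompose $\log\prod_J|\zeta_J|\leq I+II$, where $II$ is the piece weighted by $\int_H\prod_i P_r\,d_mx$.

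For the main term, Fubini together with the iterated one-variable mean value theorem for harmonic functions (valid precisely because $\tilde s_N$ has no zeros in $(\bar D(0,r))^m$) collapses the Poisson integrations one coordinate at a time and produces $II=(\lfloor\al N\rfloor+1)^m\Xi$, exactly as in Section~4. For the error term $I$, I would invoke direct analogues of Lemma \ref{lem(max)} and Lemma \ref{lem(int)} with $N$ replaced by $\lfloor\al N\rfloor$. The extension of Lemma \ref{lem(max)} is routine, since the table (\ref{table}), the partition $H=\bigcup H_{t_1,\dots,t_m}$, and the symmetric-difference estimate $\vol_{\R^m}(H_{t_1,\dots,t_m}(N)\triangle H_{t_1,\dots,t_m})\to 0$ depend only on the node count and are insensitive to whether we call it $N$ or $\lfloor\al N\rfloor$.

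The only substantive new step is sharpening Lemma \ref{lem(int)}: the exceptional event of probability $e^{-Q_{\kappa r,m}(N)}$ produced by Lemma \ref{lem(sup)} on $(\pa D(0,\kappa r))^m$ must be tightened to $e^{-R_{\kappa r,m}(N)}$. The key observation is that Cauchy's integral formula still gives $|c_K|\leq[\binom{N}{K}(\kappa r)^{2|K|}]^{-1/2}$ whenever $\sup_{(\pa D(0,\kappa r))^m}|\tilde s_N|<1$, while the Gaussian tail $\gamma_N\{|c_K|\leq t\}\leq\min\{1,t^2\}$ contributes a nontrivial factor only when $\binom{N}{K}(\kappa r)^{2|K|}\geq 1$, that is, for $K\in\La_{m,N}(\kappa r)$; retaining only those factors in the product gives $e^{-R_{\kappa r,m}(N)}$. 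Piecing everything together yields $I\leq o(N^{m+1})/\de^{\frac{3}{2}m+\half}$ outside an event of total probability at most $e^{-e^N}+e^{-R_{\kappa r,m}(N)}$, which is the content of the lemma. The main obstacle I anticipate is merely bookkeeping: making sure the Riemann-sum-to-integral step remains uniform in $u\in(\pa D(0,r))^m$ after the substitution $N\mapsto\lfloor\al N\rfloor$, which is straightforward because all hidden constants depend only on $p$, $m$, $r$, $\de$, and an $o(1)$ in $\lfloor\al N\rfloor^{-1}$.
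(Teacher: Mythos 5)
Your proposal is correct and follows the same route the paper intends (the paper itself declines to write it out, stating only that "the proof is similar" to Lemma \ref{lem(logProdzeta)}). You correctly isolate the one genuinely new point: the Gaussian tail bound $\gamma_N(|c_K|\le t)\le\min\{1,t^2\}$ is only nontrivial when $t\le1$, so the product in the proof of Lemma \ref{lem(sup)} should be restricted to $K\in\La_{m,N}(\kappa r)$, which sharpens the exceptional-event bound from $e^{-Q_{\kappa r,m}(N)}$ (which can exceed $1$ when $\kappa r<1$) to the useful $e^{-R_{\kappa r,m}(N)}$; everything else is the $N\mapsto\lfloor\al N\rfloor$ bookkeeping you describe.
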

By playing the same trick of permutation as in Section 4, we can get an upper bound estimate for $P_{0,m}(r,N)$:
\begin{align}\label{upperbound}
P_{0,m}(r,N)\leq e^{-N^{4m!}}
+m!\bigg\{e^{-e^N}+e^{-R_{\kappa r,m}(N)}+e^{-2^{\frac{N}{2}}}
+\exp{\Big[-Q_{\kappa r,m,\al}(N)-\frac{2\be_m}{p}(\lfloor\al N\rfloor)^{m+1}+\frac{o(N^{m+1})}{\de^{\frac{3}{2}m+\half}}\Big]}\bigg\}.
\end{align}

\subsection{Punch line of the proof}
In order to prove Theorem \ref{main1}, it suffices to compute $R_{r,m}(N)$ and $Q_{r,m,\al}(N)$ asymptotically. We follow the same idea in Lemma \ref{lem(Q)}. \\

The scaled lattice $\frac{1}{N}\La_{m,N}(r)$ corresponds to the set
\begin{align*}
\{x=(x_1,\dots,x_m)\in\Si_m:\ E_r(x)\geq0\}
\end{align*}
and $\frac{1}{N}\La_{r,m,\al}(N)$ corresponds to the set
\begin{align*}
\{x=(x_1,\dots,x_m)\in\R^{m+}:\ \di\sum_{i=1}^{m}x_i\leq\al\leq1\}.
\end{align*}
So we have
\begin{align}\label{R_{r,m}}
R_{r,m}(N)=\di\sum_{K\in\La_{m,N}(r)}\log{\big[\binom{N}{K}r^{2\abs{K}}\big]}=N^{m+1}\int_{x\in\Si_m:\ E_r(x)\geq0}E_r(x)\ d_mx+o(N^{m+1}),
\end{align}
\begin{align}\label{Q_{r,m,al}}
Q_{r,m,\al}(N)&=\di\sum_{K\in\La_{m,\lfloor\al N\rfloor}}\log{\big[\binom{N}{K}r^{2\abs{K}}\big]}=N^{m+1}\int_{x\in\R^{m+}:\ \sum_{i=1}^{m}x_i\leq\al}E_r(x)\ d_mx+o(N^{m+1}).
\end{align}
Moreover, if we go through the proof of Lemma \ref{lem(Q)}, we find that the $o(N^{m+1})$ terms in (\ref{R_{r,m}}) and (\ref{Q_{r,m,al}}) are uniform if $r\leq c$ for some constant $c>0$, which implies that when $r$ is replaced by $\kappa r=(1-\sqrt{\de})r$, the remainder won't depend on $\de$.

\begin{proof}[Proof of Theorem \ref{main1}]
The lower bound proof is already implied by Lemma \ref{lem(lowerbound)} and (\ref{R_{r,m}}).
To prove the upper bound, by (\ref{upperbound}) and (\ref{Q_{r,m,al}}),
\begin{align*}
\begin{split}
&\log{P_{0,m}(r,N)} \\
\leq&-N^{m+1}\min{\Big\{\int_{x\in\Si_m:\ E_{\kappa r}(x)\geq0}E_{\kappa r}(x)\ d_mx,\ \int_{x\in\R^{m+}:\ \sum_{i=1}^{m}x_i\leq\al}E_{\kappa r}(x)\ d_mx+\frac{2\be_m\al^{m+1}}{p}\Big\}}+\frac{o(N^{m+1})}{\de^{\frac{3}{2}m+\half}}.
\end{split}
\end{align*}
Similar as in Section 4, we can get
\begin{align*}
\begin{split}
\log{P_{0,m}(r,N)}
&\leq-N^{m+1}\min{\Big\{\int_{x\in\Si_m:\ E_r(x)\geq0}E_r(x)\ d_mx, \int_{x\in\R^{m+}:\ \sum_{i=1}^{m}x_i\leq\al}E_r(x)\ d_mx\Big\}}+o(N^{m+1}) \\
&=-N^{m+1}\int_{x\in\R^{m+}:\ \sum_{i=1}^{m}x_i\leq\al}E_r(x)\ d_mx+o(N^{m+1}).
\end{split}
\end{align*}
 It amounts to find a proper $\al_0=\al_0(r,m)\in(0,1]$ which maximize $\int_{x\in\R^{m+}:\ \sum_{i=1}^{m}x_i\leq\al}E_r(x)\ d_mx$. For this purpose we consider the function defined on $(0,1]$
\begin{align*}
\Upsilon(\al):=\int_{x\in\R^{m+}:\ \sum_{i=1}^{m}x_i\leq\al}E_r(x)\ d_mx.
\end{align*}
Then
\begin{align*}
\begin{split}
\Upsilon(\al)
=&2m\log{r}\int_{x\in\R^{m+}:\ \sum_{i=1}^{m}x_i\leq\al}x_1\ d_mx-m\int_{x\in\R^{m+}:\ \sum_{i=1}^{m}x_i\leq\al}x_1\log{x_1}\ d_mx \\
&-\int_{x\in\R^{m+}:\ \sum_{i=1}^{m}x_i\leq\al}(1-\di\sum_{i=1}^{m}x_i)\log{(1-\di\sum_{i=1}^{m}x_i)}\ d_mx \\
=&2m\log{r}\frac{\al^{m+1}}{(m+1)!}-m\frac{\al^{m+1}}{(m+1)!}\big[\log{\al}-\di\sum_{k=2}^{m+1}\frac{1}{k}\big]
-\frac{1}{(m-1)!}\int_0^{\al}(1-x)x^{m-1}\log{(1-x)}\ dx,
\end{split}
\end{align*}
\begin{align*}
\Upsilon'(\al)=\frac{\al^{m-1}}{(m-1)!}\Big\{\big(2\log{r}+\di\sum_{k=2}^m\frac{1}{k}\big)\al-\big[\al\log{\al}+(1-\al)\log{(1-\al)}\big]\Big\},
\end{align*}
where we take $\di\sum_{k=2}^m\frac{1}{k}=0$ when $m=1$.
So if $2\log{r}+\di\sum_{k=2}^m\frac{1}{k}\geq0$, $\Upsilon'(\al)\geq0$ over $(0,1]$,
\begin{align*}
\di\max_{(0,1]}\Upsilon=\Upsilon(1),\ \Rightarrow\al_0=1.
\end{align*}
If $2\log{r}+\di\sum_{k=2}^m\frac{1}{k}<0$, let $\al_0\in(0,1)$ be the nonzero root of
$\big(2\log{r}+\di\sum_{k=2}^m\frac{1}{k}\big)\al=\al\log{\al}+(1-\al)\log{(1-\al)}$,
\begin{align}\label{intE_r}
\begin{split}
\di\max_{(0,1]}\Upsilon=&\Upsilon(\al_0)=\int_{x\in\R^{m+}:\ \sum_{i=1}^{m}x_i\leq\al_0}E_r(x)\ d_mx \\
\Big(=&\frac{1}{(m+1)!}\big[(1-\al_0^m)\log{(1-\al_0)}+\di\sum_{k=1}^{m}\frac{\al_0^k}{k}\big]\Big).
\end{split}
\end{align}
\end{proof}

\begin{rmk}
The proofs of Theorem \ref{main2} and \ref{main1} also work for a general polydisc $\di\prod_{i=1}^m D(0,r_i)$. For example, if $r=(r_1,\dots,r_m)\in[1,\infty)^m$, the function $E_r$ in Theorem \ref{main2} would be $E_r(x)=2\di\sum_{i=1}^mx_i\log{r_i}-[\di\sum_{i=1}^mx_i\log{x_i}+(1-\di\sum_{i=1}^mx_i)\log{(1-\di\sum_{i=1}^mx_i)}]$ and $\di\int_{\Si_m}E_r(x)\ d_mx=\frac{2}{(m+1)!}\di\sum_{i=1}^m\log{r_i}+\frac{1}{m!}\di\sum_{k=2}^{m+1}\frac{1}{k}$.
\end{rmk}

\section{Hole probability of $SU(2)$ polynomials}
\begin{proof}[Proof of Corollary \ref{main3}]
When $r\geq1$, $\al_0=1$. The result follows from Theorem \ref{main2}. \\
When $0<r<1$, $$x\in\R^+:\ E_r(x)=2x\log{r}-\big[x\log{x}+(1-x)\log{(1-x)}\big]\geq0\Leftrightarrow0\leq x\leq\al_0.$$
By Theorem \ref{main1},
\begin{align*}
\log{P_{0,1}(r,N)}=-N^2\int_0^{\al_0}E_r(x)\ dx+o(N^2),
\end{align*}
where the value of the integral in the corollary is due to (\ref{intE_r}) and the fact that
\begin{align*}
2\al_0\log{r}=\al_0\log{\al_0}+(1-\al_0)\log{(1-\al_0)}.
\end{align*}
\end{proof}

\begin{proof}[Proof of Theorem \ref{main4}]

Since $\partial U$ is a Jordan curve, by Carath\'{e}odory's theorem, $\phi$ can be extended to a homeomorphism $\bar{D}(0,1)\to\bar{U}$. We still use $\phi$ to denote the extension map. Thus, $\tilde{s}_N(z)=\di\sum_{k=0}^{N}c_k\sqrt{\binom{N}{k}}z^k$ is nonvanishing over $\bar{U}$ if and only if
$t_N(\omega):=\di\sum_{k=0}^{N}c_k\sqrt{\binom{N}{k}}(\phi(\omega))^k$ is nonvanishing over $\bar{D}(0,1)$,
where $t_N\in\ocal(D(0,1))\cap\ccal(\bar{D}(0,1))$.

Since
\begin{align*}
\begin{bmatrix}
t_N(0) \\
t'_N(0) \\
\vdots \\
t^{(N)}_N(0)
\end{bmatrix}
=A
\begin{bmatrix}
c_0 \\
c_1 \\
\vdots \\
c_N
\end{bmatrix},
\end{align*}
where A is an $(N+1)\times(N+1)$ lower triangular matrix with diagonal entries $\Big\{k!\sqrt{\binom{N}{k}}(\phi'(0))^k\Big\}_{0\leq k\leq N}$, $\big(t_N(0)\ \dots\ t^{(N)}_N(0)\big)^t$ is Gaussian with covariance matrix $AA^*$.
\begin{align}\label{det(AA^*)}
\det(AA^*)=\abs{\det{A}}^2=\di\prod_{k=0}^{N}\big[k!^2\binom{N}{k}\abs{\phi'(0)}^{2k}\big]\neq0
\end{align}
because $\phi$ is a biholomorphism.

We again define $\kappa=1-\sqrt{\de}$. Then if $\di\sup_{\pa D(0,\kappa)}\abs{t_N}<1$, for $0\leq k\leq N$,
\begin{align*}
\abs{t^{(k)}_N(0)}=\Abs{\frac{k!}{2\pi\sqrt{-1}}\int_{\pa D(0,\kappa)}\frac{t_N(u)}{u^{k+1}}\ du}\leq\frac{k!}{\kappa^k}.
\end{align*}
Therefore,
\begin{align*}
\begin{split}
\gamma_N(\di\sup_{\pa D(0,\kappa)}\abs{t_N}<1)
&\leq\gamma_N\Big\{\big(t_N(0),\dots,t^{(N)}_N(0)\big)\in\di\prod_{k=0}^{N}\bar{D}\big(0,\frac{k!}{\kappa^k}\big)\Big\} \\
&=\frac{1}{\pi^{N+1}\det(AA^*)}\int_{\prod_{k=0}^{N}\bar{D}\big(0,\frac{k!}{\kappa^k}\big)}\exp\{-\eta^*(AA^*)^{-1}\eta\}\ d_{2(N+1)}\eta \\
&\leq\frac{\pi^{N+1}\prod_{k=0}^{N}\big(\frac{k!}{\kappa^k}\big)^2}{\pi^{N+1}\det(AA^*)}.
\end{split}
\end{align*}
By (\ref{det(AA^*)}),
\begin{align*}
\begin{split}
\gamma_N(\di\sup_{\pa D(0,\kappa)}\abs{t_N}<1)
&\leq\frac{\prod_{k=0}^{N}\big(\frac{k!}{\kappa^k}\big)^2}{\prod_{k=0}^{N}\big[k!^2\binom{N}{k}\abs{\phi'(0)}^{2k}\big]} \\
&=\Big\{\di\prod_{k=0}^{N}\big[\binom{N}{k}(\kappa\abs{\phi'(0)})^{2k}\big]\Big\}^{-1} \\
&=\exp\{-Q_{\kappa\abs{\phi'(0)},1}(N)\} \\
&=\exp\{-(\log{\abs{\phi'(0)}}+\log{\kappa}+\half)N^2+o(N^2)\},
\end{split}
\end{align*}
where the last equality is due to Lemma \ref{lem(Q)}.

Similar as Lemma \ref{lem(logProdzeta)}, we can show that if $t_N|_{\bar{D}(0,1)}\neq0$, then outside an event of probability at most $e^{-e^N}+\exp\{-Q_{\kappa\abs{\phi'(0)},1}(N)\}=\exp\{-(\log{\abs{\phi'(0)}}+\log{\kappa}+\half)N^2+o(N^2)\}$,
\begin{align*}
\log{\di\prod_{j=0}^{N}\abs{t_N(z_j)}}\leq\frac{o(N^2)}{\de^2}+(N+1)\log{\abs{c_0}},
\end{align*}
where $z_j=\kappa e^{2\pi\sqrt{-1}\frac{j}{N+1}}$, $0\leq j\leq N$.

$\big(t_N(z_0)\ \dots\ t_N(z_N)\big)^t$ is complex Gaussian with covariance matrix
\begin{align*}
\begin{split}
\Si
&=\big(\E_N(t_N(z_j)\overline{t_N(z_j)})\big)_{0\leq i,j\leq N}=\big(\di\sum_{k=0}^{N}\binom{N}{k}(\phi(z_i))^k(\overline{\phi(z_j)})^k\big)_{0\leq i,j\leq N} \\
&=\begin{bmatrix}
\sqrt{\binom{N}{0}}&\sqrt{\binom{N}{1}}\phi(z_0)&\cdots&\sqrt{\binom{N}{N}}(\phi(z_0))^N \\
\cdots&\cdots&\cdots&\cdots \\
\sqrt{\binom{N}{0}}&\sqrt{\binom{N}{1}}\phi(z_N)&\cdots&\sqrt{\binom{N}{N}}(\phi(z_N))^N
\end{bmatrix}
\begin{bmatrix}
\sqrt{\binom{N}{0}}&\sqrt{\binom{N}{1}}\phi(z_0)&\cdots&\sqrt{\binom{N}{N}}(\phi(z_0))^N \\
\cdots&\cdots&\cdots&\cdots \\
\sqrt{\binom{N}{0}}&\sqrt{\binom{N}{1}}\phi(z_N)&\cdots&\sqrt{\binom{N}{N}}(\phi(z_N))^N
\end{bmatrix}^*
\end{split}
\end{align*}
and
\begin{align*}
\det{\Si}=\di\prod_{k=0}^{N}\binom{N}{k}\di\prod_{0\leq i<j\leq N}\abs{\phi(z_i)-\phi(z_j)}^2,
\end{align*}
\begin{align}\label{eq6}
\Rightarrow\log{\det{\Si}}=\di\sum_{k=0}^{N}\log{\binom{N}{k}}+2\di\sum_{0\leq i<j\leq N}\log{\abs{\phi(z_i)-\phi(z_j)}},
\end{align}
Next we will show that
\begin{align}\label{eq7}
2\di\sum_{0\leq i<j\leq N}\log{\abs{\phi(z_i)-\phi(z_j)}}=N^2\int_{\pa D(0,\kappa)}\int_{\pa D(0,\kappa)}\log{\abs{\phi(u_1)-\phi(u_2)}}\ d\sigma_{\kappa}(u_1)d\sigma_{\kappa}(u_2)+o_{\de}(N^2),
\end{align}
where $o_{\de}(N^2)$ denotes a lower order term depending on $\delta$.

Since
\begin{align*}
2\di\sum_{0\leq i<j\leq N}\log{\abs{\phi(z_i)-\phi(z_j)}}
=2(N+1)^2\di\sum_{0\leq i<j\leq N}\frac{1}{(N+1)^2}\log{\abs{\phi(\kappa e^{2\pi\sqrt{-1}\frac{i}{N+1}})-\phi(\kappa e^{2\pi\sqrt{-1}\frac{j}{N+1}})}}
\end{align*}
and
\begin{align*}
\begin{split}
&\int_{\pa D(0,\kappa)}\int_{\pa D(0,\kappa)}\log{\abs{\phi(u_1)-\phi(u_2)}}\ d\sigma_{\kappa}(u_1)d\sigma_{\kappa}(u_2) \\
=&\int_0^1\int_0^1\log{\abs{\phi(\kappa e^{2\pi\sqrt{-1}x})-\phi(\kappa e^{2\pi\sqrt{-1}y})}}\ dxdy \\
=&2\iint_{0\leq x\leq y\leq1}\log{\abs{\phi(\kappa e^{2\pi\sqrt{-1}x})-\phi(\kappa e^{2\pi\sqrt{-1}y})}}\ dxdy,
\end{split}
\end{align*}
it suffices to show that
\begin{align*}
\begin{split}
&\Abs{\di\sum_{0\leq i<j\leq N}\frac{1}{(N+1)^2}\log{\abs{\phi(\kappa e^{2\pi\sqrt{-1}\frac{i}{N+1}})-\phi(\kappa e^{2\pi\sqrt{-1}\frac{j}{N+1}})}}
-\iint_{0\leq x\leq y\leq1}\log{\abs{\phi(\kappa e^{2\pi\sqrt{-1}x})-\phi(\kappa e^{2\pi\sqrt{-1}y})}}\ dxdy} \\
=&o_{\de}(1).
\end{split}
\end{align*}
Since $\phi$ is a biholomorphism in $D(0,1)$, we set
\begin{align*}
\inf_{\bar{D}(0,\kappa)}\abs{\phi'}=a(\de)>0.
\end{align*}
And by Cauchy's inequality, we have
\begin{align*}
\sup_{\bar{D}(0,\kappa)}\abs{\phi'}\leq O(\de^{-1}).
\end{align*}
For each $N$, denote
\begin{align*}
\De(N)=\{(i,j)\in\Z^2:\ 0\leq i<j\leq N\},
\end{align*}
the "far from diagonal" indices
\begin{align*}
FD(N)=\left\{(i,j)\in\De(N):\
\begin{array}{cc}
\lfloor\sqrt{N+1}\rfloor+i\leq j\leq N-\lfloor\sqrt{N+1}\rfloor+i &\text{ if }0\leq i\leq\lfloor\sqrt{N+1}\rfloor \\
\lfloor\sqrt{N+1}\rfloor+i\leq j\leq N &\text{ if }\lfloor\sqrt{N+1}\rfloor<i\leq N-\lfloor\sqrt{N+1}\rfloor \\
j\in\emptyset &\text{ if }i>N-\lfloor\sqrt{N+1}\rfloor
\end{array}
\right\},
\end{align*}
\begin{align*}
\mathscr{FD}(N)=\bigcup_{(i,j)\in FD(N)}[\frac{i}{N+1},\frac{i+1}{N+1}]\times[\frac{j}{N+1},\frac{j+1}{N+1}],
\end{align*}
and the "near diagonal" indices:
\begin{align*}
D(N)=\De(N)\setminus FD(N).
\end{align*}
Then
\begin{align*}
\abs{D(N)}=O(N^{\frac{3}{2}}),
\end{align*}
and for $(i,j)\in FD(N)$,
\begin{align*}
\frac{i}{N+1}-\frac{j}{N+1}\geq(N+1)^{-\half}\ \mod1.
\end{align*}
So
\begin{align*}
\begin{split}
&\Abs{\di\sum_{0\leq i<j\leq N}\frac{1}{(N+1)^2}\log{\abs{\phi(\kappa e^{2\pi\sqrt{-1}\frac{i}{N+1}})-\phi(\kappa e^{2\pi\sqrt{-1}\frac{j}{N+1}})}}
-\iint_{0\leq x\leq y\leq1}\log{\abs{\phi(\kappa e^{2\pi\sqrt{-1}x})-\phi(\kappa e^{2\pi\sqrt{-1}y})}}\ dxdy} \\
\leq&\di\sum_{(i,j)\in D(N)}\frac{1}{(N+1)^2}\Abs{\log{\abs{\phi(\kappa e^{2\pi\sqrt{-1}\frac{i}{N+1}})-\phi(\kappa e^{2\pi\sqrt{-1}\frac{j}{N+1}})}}} \\
&+\di\sum_{(i,j)\in FD(N)}\di\int_{\frac{j}{N+1}}^{\frac{j+1}{N+1}}\di\int_{\frac{i}{N+1}}^{\frac{i+1}{N+1}}\Abs{\log{\abs{\phi(\kappa e^{2\pi\sqrt{-1}x})-\phi(\kappa e^{2\pi\sqrt{-1}y})}}-\log{\abs{\phi(\kappa e^{2\pi\sqrt{-1}\frac{i}{N+1}})-\phi(\kappa e^{2\pi\sqrt{-1}\frac{j}{N+1}})}}} dxdy \\
&+\Abs{\di\iint_{\mathscr{FD}(N)}\log{\abs{\phi(\kappa e^{2\pi\sqrt{-1}x})-\phi(\kappa e^{2\pi\sqrt{-1}y})}}\ dxdy-\di\iint_{0\leq x\leq y\leq1}\log{\abs{\phi(\kappa e^{2\pi\sqrt{-1}x})-\phi(\kappa e^{2\pi\sqrt{-1}y})}}\ dxdy} \\
=&I+II+III.
\end{split}
\end{align*}

\begin{align*}
\begin{split}
&\frac{a(\de)}{N+1}\leq\abs{\phi(\kappa e^{2\pi\sqrt{-1}\frac{i}{N+1}})-\phi(\kappa e^{2\pi\sqrt{-1}\frac{j}{N+1}})}\leq O(1)\ \ \forall(i,j)\in D(N), \\
\Rightarrow&\Abs{\log{\abs{\phi(\kappa e^{2\pi\sqrt{-1}\frac{i}{N+1}})-\phi(\kappa e^{2\pi\sqrt{-1}\frac{j}{N+1}})}}}\leq\abs{\log{a(\de)}}+\log{(N+1)}, \\
\Rightarrow&I\leq\frac{O(N^{\frac{3}{2}})}{N^2}[\abs{\log{a(\de)}}+\log{(N+1)}]=o_{\de}(1).
\end{split}
\end{align*}
Since
\begin{align*}
\di\sup_{x-y\geq(N+1)^{-\half}\mod1}\norm{\nabla\log{\abs{\phi(\kappa e^{2\pi\sqrt{-1}x})-\phi(\kappa e^{2\pi\sqrt{-1}y})}}}\leq\frac{O(\de^{-1})}{a(\de)(N+1)^{-\half}}=\frac{O(N^{\half})}{\de a(\de)},
\end{align*}
\begin{align*}
\begin{split}
\Rightarrow II\leq&\frac{N^2}{(N+1)^2}\di\sup_{x-y\geq(N+1)^{-\half}\mod1}\norm{\nabla\log{\abs{\phi(\kappa e^{2\pi\sqrt{-1}x})-\phi(\kappa e^{2\pi\sqrt{-1}y})}}}O(N^{-1}) \\
\leq&\frac{O(N^{-\half})}{\de a(\de)}=o_{\de}(1).
\end{split}
\end{align*}
By a similar argument as Lemma \ref{lem(detW2)}, we have
\begin{align*}
\di\lim_{N\to\infty}\vol_{\R^2}\left(\mathscr{FD}(N)\triangle\{(x,y)\in\R^2:\ 0\leq x\leq y\leq1\}\right)=0.
\end{align*}
Furthermore, (\ref{eq8}) and (\ref{eq9}) below indicate that the function $\log{\abs{\phi(\kappa e^{2\pi\sqrt{-1}x})-\phi(\kappa e^{2\pi\sqrt{-1}y})}}$ is $L^1$ over $[0,1]^2$,
\begin{align*}
\Rightarrow III\leq o_{\de}(1).
\end{align*}
Thus, we have proved (\ref{eq7}).

For $u_1,\ u_2\in D(0,1)$, define:
\begin{align*}
\psi(u_1,u_2)=
\begin{cases}
\frac{\phi(u_1)-\phi(u_2)}{u_1-u_2}&\text{ if }u_1\neq u_2, \\
\phi'(u_1)&\text{ if }u_1=u_2.
\end{cases}
\end{align*}
Then $\psi$ is continuous and nonzero in $D(0,1)\times D(0,1)$. Moreover, by removable singularity theorem, $\psi$ is holomorphic in $u_1$ as well as $u_2$. Therefore, $\log{\abs{\psi}}$ is pluriharmonic in $D(0,1)\times D(0,1)$. By mean value equality,
\begin{align}\label{eq8}
\begin{split}
&\int_{\pa D(0,\kappa)}\int_{\pa D(0,\kappa)}\log{\abs{\phi(u_1)-\phi(u_2)}}\ d\sigma_{\kappa}(u_1)d\sigma_{\kappa}(u_2) \\
=&\int_{\pa D(0,\kappa)}\int_{\pa D(0,\kappa)}\log{\abs{\psi(u_1,u_2)}}\ d\sigma_{\kappa}(u_1)d\sigma_{\kappa}(u_2)
+\int_{\pa D(0,\kappa)}\int_{\pa D(0,\kappa)}\log{\abs{u_1-u_2}}\ d\sigma_{\kappa}(u_1)d\sigma_{\kappa}(u_2) \\
=&\log{\abs{\psi(0,0)}}+\log{\kappa}+\int_{\pa D(0,1)}\int_{\pa D(0,1)}\log{\abs{u_1-u_2}}\ d\sigma_1(u_1)d\sigma_1(u_2) \\
=&\log{\abs{\phi'(0)}}+\log{\kappa}+\int_{\pa D(0,1)}\int_{\pa D(0,1)}\log{\abs{u_1-u_2}}\ d\sigma_1(u_1)d\sigma_1(u_2),
\end{split}
\end{align}
\begin{align}\label{eq9}
\begin{split}
&\int_{\pa D(0,1)}\int_{\pa D(0,1)}\log{\abs{u_1-u_2}}\ d\sigma_1(u_1)d\sigma_1(u_2) \\
=&\int_0^1\int_0^1\log{\abs{e^{2\pi\sqrt{-1}x}-e^{2\pi\sqrt{-1}y}}}\ dxdy \\
=&\int_0^1\log{\abs{1-e^{2\pi\sqrt{-1}x}}}\ dx \\
=&\int_{\pa D(0,1)}\log{\abs{1-z}}\ d\sigma_1(z) \\
=&0,
\end{split}
\end{align}
where the last equality is due to Lebesgue's dominated convergence theorem.

(\ref{eq6})$\sim$(\ref{eq9}) show that
\begin{align*}
\begin{split}
\log{\det{\Si}}
&=\di\sum_{k=0}^{N}\log{\binom{N}{k}}+(\log{\abs{\phi'(0)}}+\log{\kappa})N^2+o_{\de}(N^2) \\
&=(\log{\abs{\phi'(0)}}+\log{\kappa}+\half)N^2+o_{\de}(N^2).
\end{split}
\end{align*}
The remaining part is similar to Section 4.
\end{proof}

\begin{rmk}
For $U=D(0,r)$, $\phi$ would be a rotation composed with a scaling by $r$. So $\abs{\phi'(0)}=r$. Thus the upper bound in Theorem \ref{main4} is $-(\log{r}+\half)N^2+o(N^2)$, which agrees with Corollary \ref{main3} in case of $r\geq1$.
\end{rmk}

\section{Generalized hole probabilities of $SU(2)$ polynomials}
If $n(r,N)$ denotes the number of zeros of $\tilde{s}_N(z)$ in $\bar{D}(0,r)$ counting multiplicity, then the hole probability $P_{0,1}(r,N)$ is just the first term of the sequence of the probabilities
\begin{align*}
P_{k,1}(r,N)=\gamma_N\{n(r,N) \leq k\},\ k\geq0.
\end{align*}
We call $P_{k,1}(r,N)$ a generalized hole probability because compared with the large degree or total number of zeros in $\C$ of the polynomial $\tilde{s}_N$, any finite number $k$ is negligible. It is a status of almost having no zero in $D(0,r)$. And by Theorem \ref{main5}, it turns out that the generalized hole probabilities are numerically almost equal to the regular one.
\begin{proof}[Proof of Theorem \ref{main5}]
(\ref{ineq(sup)}) implies that $\forall\ \eta>0$,
\begin{align}\label{upbd}
\gamma_N\big\{\int_{\pa D(0,r)}\log{\abs{\tilde{s}_N(u)}}\ d\sigma_r(u)>\frac{N}{2}\log{(1+r^2)}+\eta N\big\}\leq e^{-e^{\eta N}}\text{ for }N\gg1.
\end{align}
We follow the notations in Section 5 except this time $m=1$ and we take the number of partitions $p=1$. The corresponding statement of Lemma \ref{lem(logProdzeta'')} is
\begin{align*}
\gamma_N\big\{
\log{\di\prod_{j=0}^{\lfloor\al_0N\rfloor}\abs{\zeta_j}}>\frac{o(N^2)}{\de^2}+(\lfloor\al_0N\rfloor+1)\int_{\pa D(0,r)}\log{\abs{\tilde{s}_N(u)}}\ d\sigma_r(u)\big\}
\leq e^{-e^N}+e^{-R_{\kappa r,1}(N)},
\end{align*}
where $\zeta_j=\tilde{s}_N(\kappa re^{2\pi\sqrt{-1}\frac{j}{\lfloor\al_0N\rfloor+1}})$, $0\leq j\leq\lfloor\al_0N\rfloor$.
Here we do not need to assume $0\not\in\tilde{s}_N\big({\bar{D}(0,r)}\big)$ as in Lemma \ref{lem(logProdzeta'')}: the counterpart of $II$ in (\ref{ineq2}) is
\begin{align*}
II=(\lfloor\al_0N\rfloor+1)\int_{\pa D(0,r)}\log{\abs{\tilde{s}_N(u)}}\int_HP_r(\kappa re^{2\pi\sqrt{-1}x},u)\ dxd\sigma_r(u).
\end{align*}
Since $m=1$ and $p=1$, $H=[0,1]\subset\R$,
\begin{align*}
\begin{split}
II
&=(\lfloor\al_0N\rfloor+1)\int_{\pa D(0,r)}\log{\abs{\tilde{s}_N(u)}}\int_0^1P_r(\kappa re^{2\pi\sqrt{-1}x},u)\ dxd\sigma_r(u) \\
&=(\lfloor\al_0N\rfloor+1)\int_{\pa D(0,r)}\log{\abs{\tilde{s}_N(u)}}\ d\sigma_r(u).
\end{split}
\end{align*}
Therefore, $\forall\ \eta>0$ small,
\begin{align}\label{ineq12}
\begin{split}
&\gamma_N\big\{\int_{\pa D(0,r)}\log{\abs{\tilde{s}_N(u)}}\ d\sigma_r(u)\leq\frac{N}{2}\log{(1+r^2)}-\eta N\big\} \\
\leq&e^{-e^N}+e^{-R_{\kappa r,1}(N)}
+\gamma_N\Big\{\di\prod_{j=0}^{\lfloor\al_0N\rfloor}\abs{\zeta_j}\leq\exp{\big\{\frac{o(N^2)}{\de^2}+(\lfloor\al_0N\rfloor+1)[\frac{N}{2}\log{(1+r^2)}-\eta N]\big\}}\Big\}.
\end{split}
\end{align}
Following the steps (\ref{ineq11})$\sim$(\ref{ineq(F)}), we can show that
\begin{align*}
\begin{split}
&\log{\gamma_N\big\{\int_{\pa D(0,r)}\log{\abs{\tilde{s}_N(u)}}\ d\sigma_r(u)\leq\frac{N}{2}\log{(1+r^2)}-\eta N\big\}} \\
\leq&N(\lfloor\al_0N\rfloor+1)[\log(1+r^2)-2\eta]-Q_{\kappa r,1,\al_0}(N)-2\beta_1\al_0^2N^2+\frac{o(N^2)}{\de^2}.
\end{split}
\end{align*}
\begin{align*}
Q_{\kappa r,1,\al_0}(N)
\sim N^2\int_0^{\al_0}E_r(x)\ dx=\half\al_0[2\log{\kappa r}+1-\log{\al_0}]N^2,
\end{align*}
\begin{align*}
\begin{split}
\beta_1
&=\int_0^1x\log{[2\sin(\pi x)]}\ dx \\
&=\int_0^1(x-\half)\log{[2\sin(\pi x)]}\ dx+\half\int_0^1\log{[2\sin(\pi x)]}\ dx \\
&=\int_{-\half}^{\half}x\log{[2\sin\pi(x+\half)]}\ dx+\half\int_0^1\log{[2\sin(\pi x)]}\ dx \\
&=\int_{-\half}^{\half}x\log{[2\cos(\pi x)]}\ dx+\half\int_0^1\log{[2\sin(\pi x)]}\ dx,
\end{split}
\end{align*}
as $\di\int_{-\half}^0x\log{[2\cos(\pi x)]}\ dx$ and $\di\int_0^{\half}x\log{[2\cos(\pi x)]}\ dx$ both converge and $x\log{[2\cos(\pi x)]}$ is odd,
\begin{align*}
\beta_1=\half\int_0^1\log{[2\sin(\pi x)]}\ dx=\half\int_{\pa D(0,1)}\log{\abs{1-z}}\ d\sigma_1(z),
\end{align*}
which equals $0$ as in (\ref{eq9}).
Thus
\begin{align}\label{ineq13}
\begin{split}
&\log{\gamma_N\big\{\int_{\pa D(0,r)}\log{\abs{\tilde{s}_N(u)}}\ d\sigma_r(u)\leq\frac{N}{2}\log{(1+r^2)}-\eta N\big\}} \\
\leq&-\half\al_0[1+2\log{(\kappa r)}-\log{\al_0}-2\log{(1+r^2)}+4\eta]N^2+\frac{o(N^2)}{\de^2}.
\end{split}
\end{align}
On the other hand,
\begin{align}\label{ineq14}
R_{\kappa r,1}(N)\sim N^2\int_{E_{\kappa r}(x)\geq0}E_{\kappa r}(x)\ dx.
\end{align}
Combine (\ref{ineq12})$\sim$(\ref{ineq14}), and let $\de\to0+$, we get
\begin{align}\label{ineq15}
\begin{split}
&\log{\gamma_N\big\{\int_{\pa D(0,r)}\log{\abs{\tilde{s}_N(u)}}\ d\sigma_r(u)\leq\frac{N}{2}\log{(1+r^2)}-\eta N\big\}} \\
\leq&-\min{\big\{\half\al_0[1+2\log{r}-\log{\al_0}-2\log{(1+r^2)}+4\eta],\half\al_0[1+2\log{r}-\log{\al_0}]\big\}}N^2+o(N^2) \\
=&-\half\al_0[1+2\log{r}-\log{\al_0}-2\log{(1+r^2)}+4\eta]N^2+o(N^2),
\end{split}
\end{align}
for $0<\eta<\half\log{(1+r^2)}$.
Since
\begin{align*}
\int_{E_r(x)\geq0}E_r(x)\ dx=\half\al_0[1+2\log{r}-\log{\al_0}]>0\Rightarrow 1+2\log{r}-\log{\al_0}>0,
\end{align*}
we can choose $0<\eta<\half\log{(1+r^2)}$ close to $\half\log{(1+r^2)}$ such that
\begin{align*}
1+2\log{r}-\log{\al_0}-2\log{(1+r^2)}+4\eta>0.
\end{align*}
Therefore (\ref{ineq15}) makes sense.
Denote
\begin{align*}
F_\eta(r)=\half\al_0[1+2\log{r}-\log{\al_0}-2\log{(1+r^2)}+4\eta],
\end{align*}
so we have
\begin{align}\label{lwbd}
\gamma_N\big\{\int_{\pa D(0,r)}\log{\abs{\tilde{s}_N(u)}}\ d\sigma_r(u)\leq\frac{N}{2}\log{(1+r^2)}-\eta N\big\}\leq e^{-F_\eta(r)N^2+o(N^2)},\
0<\eta<\half\log{(1+r^2)}.
\end{align}

Let $\rho>1$ to be determined. By discarding a null set, we may assume $\tilde{s}_N(0)\neq0$, $0\not\in\tilde{s}_N\big(\pa D(0,r)\big)$ and $0\not\in\tilde{s}_N\big(\pa D(0,\rho^{-1}r)\big)$.

So by Jensen's formula, almost surely,
\begin{align}
\int_{\pa D(0,r)}\log{\abs{\tilde{s}_N(u)}}\ d\sigma_r(u)&=\log{\abs{c_0}}+\int_{0}^{r}\frac{n(t,N)}{t}\ dt, \label{eq10}\\
\int_{\pa D(0,\rho^{-1}r)}\log{\abs{\tilde{s}_N(u)}}\ d\sigma_{\rho^{-1}r}(u)&=\log{\abs{c_0}}+\int_{0}^{\rho^{-1}r}\frac{n(t,N)}{t}\ dt. \label{eq11}
\end{align}
Since $n(r,N)$ is increasing with respect to $r$,
\begin{align*}
(\ref{eq10})\sim(\ref{eq11})\Rightarrow
&\int_{\pa D(0,r)}\log{\abs{\tilde{s}_N(u)}}\ d\sigma_r(u)-\int_{\pa D(0,\rho^{-1}r)}\log{\abs{\tilde{s}_N(u)}}\ d\sigma_{\rho^{-1}r}(u) \\
=&\int_{\rho^{-1}r}^{r}\frac{n(t,N)}{t}\ dt\leq n(r,N)\log{\rho},
\end{align*}
\begin{align}\label{lwbd2}
\Rightarrow\ n(r,N)\geq\frac{1}{\log{\rho}}\big[\int_{\pa D(0,r)}\log{\abs{\tilde{s}_N(u)}}\ d\sigma_r(u)
-\int_{\pa D(0,\rho^{-1}r)}\log{\abs{\tilde{s}_N(u)}}\ d\sigma_{\rho^{-1}r}(u)\big].
\end{align}
(\ref{upbd}) $\Rightarrow$ For $\eta_1>0$, outside an event of probability at most $e^{-e^{\eta_1 N}}$,
\begin{align}\label{est7}
\int_{\pa D(0,\rho^{-1}r)}\log{\abs{\tilde{s}_N(u)}}\ d\sigma_{\rho^{-1}r}(u)\leq \frac{N}{2}\log{(1+\rho^{-2}r^2)}+\eta_1 N,
\end{align}
(\ref{lwbd}) $\Rightarrow$ For $0<\eta_2<\half\log{(1+r^2)}$, outside an event of probability at most $e^{-F_{\eta_2}(r)N^2+o(N^2)}$,
\begin{align}\label{est8}
\int_{\pa D(0,r)}\log{\abs{\tilde{s}_N(u)}}\ d\sigma_r(u)\geq \frac{N}{2}\log{(1+r^2)}-\eta_2 N.
\end{align}
(\ref{lwbd2})$\sim$(\ref{est8}) $\Rightarrow$ outside an event of probability at most $e^{-e^{\eta_1 N}}+e^{-F_{\eta_2}(r)N^2+o(N^2)}$,
\begin{align*}
n(r,N)\geq\frac{N}{\log{\rho}}[\half\log{(1+r^2)}-\half\log{(1+\rho^{-2}r^2)}-(\eta_1+\eta_2)].
\end{align*}
\begin{align*}
\Rightarrow\gamma_N\big\{n(r,N)<\frac{N}{\log{\rho}}[\half\log{(1+r^2)}-\half\log{(1+\rho^{-2}r^2)}-(\eta_1+\eta_2)]\big\}\leq e^{-e^{\eta_1N}}+e^{-F_{\eta_2}(r)N^2+o(N^2)},
\end{align*}
where the right hand side is independent of $\rho$. We need to choose proper $\rho$, $\eta_1$ and $\eta_2$.

$\forall\ \tau>0$, we set
\begin{align*}
\frac{1}{\log{\rho}}[\half\log{(1+r^2)}-\half\log{(1+\rho^{-2}r^2)}-(\eta_1+\eta_2)]=\tau,
\end{align*}
\begin{align*}
\eta_1+\eta_2=\eta_{\tau}(\rho):=\half\log{(1+r^2)}-\half\log{(1+\rho^{-2}r^2)}-\tau\log{\rho}.
\end{align*}
If $\tau>0$ is small enough, $\rho_0(\tau):=\sqrt{\frac{1-\tau}{\tau}}r>1$,
\begin{align*}
\eta'_{\tau}(\rho)=\frac{\rho^{-3}r^2}{1+\rho^{-2}r^2}-\frac{\tau}{\rho}=\frac{(1-\tau)r^2-\tau\rho^2}{\rho(\rho^2+r^2)}
\begin{cases}
>0 &\text{ when }1<\rho<\rho_0, \\
=0 &\text{ when }\rho=\rho_0, \\
<0 &\text{ when }\rho>\rho_0.
\end{cases}
\end{align*}
\begin{align*}
\Rightarrow(\eta_1+\eta_2)_{\max}
&=\eta_{\tau}(\rho_0(\tau)) \\
&=\half\log{(1+r^2)}-\half\log{(1+\frac{\tau}{1-\tau})}-\tau[\half\log{(1-\tau)}-\half\log{\tau}+\log{r}] \\
&=\half\log{(1+r^2)}+\half\log{(1-\tau)}-\frac{\tau}{2}\log{(1-\tau)}+\frac{\tau}{2}\log{\tau}-\tau\log{r} \\
&=\half\log{(1+r^2)}+\half[\tau\log{\tau}+(1-\tau)\log{(1-\tau)}-2\tau\log{r}].
\end{align*}
For a fixed $r>0$, we can choose smaller $\tau>0$ if necessary so that $$-\half\log{(1+r^2)}<\tau\log{\tau}+(1-\tau)\log{(1-\tau)}-2\tau\log{r}<0.$$ This is possible since $$\tau\log{\tau}+(1-\tau)\log{(1-\tau)}-2\tau\log{r}<0\text{ if }0<\tau<\al_0$$ and $$\di\lim_{\tau\to0+}[\tau\log{\tau}+(1-\tau)\log{(1-\tau)}-2\tau\log{r}]=0.$$ Thus for such $\tau$ and the corresponding $\rho_0=\rho_0(\tau)$, $$\frac{1}{4}\log{(1+r^2)}<\eta_1+\eta_2=\eta_{\tau}(\rho_0)<\half\log{(1+r^2)}.$$
In this case, $\forall\ 0<\eta_1<\frac{1}{4}\log{(1+r^2)}$,
\begin{align*}
0<\eta_2=\half\log{(1+r^2)}+\half[\tau\log{\tau}+(1-\tau)\log{(1-\tau)}-2\tau\log{r}]-\eta_1<\half\log{(1+r^2)},
\end{align*}
\begin{align*}
\gamma_N\{n(r,N)<\tau N\}
&=\gamma_N\big\{n(r,N)<\frac{N}{\log{\rho_0}}[\half\log{(1+r^2)}-\half\log{(1+\rho_0^{-2}r^2)}-(\eta_1+\eta_2)]\big\} \\
&\leq e^{-e^{\eta_1N}}+e^{-F_{\eta_2}(r)N^2+o(N^2)}.
\end{align*}
$\forall\ k\geq0$, for $N$ large enough, $k<\tau N$,
\begin{align*}
&\exp{\{-\half\al_0(1+2\log{r}-\log{\al_0})N^2+o(N^2)\}}=P_{0,1}(r,N)\leq P_{k,1}(r,N)\leq\gamma_N\{n(r,N)<\tau N\} \\
\leq&e^{-e^{\eta_1N}}+\exp\bigg\{-\half\al_0\Big\{\big(1+2\log{r}-\log{\al_0}\big)+2\big[\tau\log{\tau}+(1-\tau)\log{(1-\tau)}-2\tau\log{r}\big]-4\eta_1\Big\}N^2+o(N^2)\bigg\}.
\end{align*}
Therefore,
\begin{align*}
-\half\al_0(1+2\log{r}-\log{\al_0})\leq&\di\liminf_{N\to\infty}\frac{\log{P_{k,1}(r,N)}}{N^2}\leq\di\limsup_{N\to\infty}\frac{\log{P_{k,1}(r,N)}}{N^2} \\
\leq&-\half\al_0\Big\{\big(1+2\log{r}-\log{\al_0}\big)+2\big[\tau\log{\tau}+(1-\tau)\log{(1-\tau)}-2\tau\log{r}\big]-4\eta_1\Big\}.
\end{align*}
Let $\eta_1\to0+$ and then $\tau\to0+$,
\begin{align*}
\Rightarrow\ \di\lim_{N\to\infty}\frac{\log{P_{k,1}(r,N)}}{N^2}=-\half\al_0(1+2\log{r}-\log{\al_0})\
\Leftrightarrow\ \log{P_{k,1}(r,N)}\sim-\half\al_0(1+2\log{r}-\log{\al_0})N^2.
\end{align*}
\end{proof}

\section{Appendix}

We now prove the following lemma:
\begin{lem}
The coefficient of $g_{m,N}(\xi)$ in $\det{W_{m,N}(\xi)}$ equals $1$.
\end{lem}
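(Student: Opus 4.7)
The plan is to pinpoint, in the Leibniz expansion
\[
\det W_{m,N}(\xi) \;=\; \sum_{\sigma}\mathrm{sgn}(\sigma)\prod_{J\in\Ga_{m,N}}\xi_J^{\sigma(J)}
\]
over bijections $\sigma:\Ga_{m,N}\to\La_{m,N}$, the unique bijection that produces the monomial $g_{m,N}$, and then to compute its sign. My candidate is the telescoping-difference map
\[
\sigma_0(j_1,\ldots,j_m)\;:=\;(j_1,\,j_2-j_1,\,\ldots,\,j_m-j_{m-1}),
\]
whose inverse is the partial-sum map $(k_1,\ldots,k_m)\mapsto(k_1,k_1+k_2,\ldots,k_1+\cdots+k_m)$. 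Since the partial sums are weakly increasing and bounded by $\abs{K}\leq N$, this really is a bijection $\Ga_{m,N}\leftrightarrow\La_{m,N}$.

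To verify that $\sigma_0$ yields $g_{m,N}$, I would collect the exponent of $\xi_{i,k}$ in $\prod_J\xi_J^{\sigma_0(J)}$ as
\[
\sum_{J:\,j_i=k}(k-j_{i-1})\;=\;\binom{N-k+m-i}{m-i}\sum_{a=0}^{k}(k-a)\binom{a+i-2}{i-2},
\]
and invoke the hockey-stick identity to collapse the inner sum to $\binom{k+i-1}{i}$, giving the total exponent $\binom{k+i-1}{i}\binom{N-k+m-i}{m-i}$ that appears in $g_{m,N}$ (the $i=1$ case is immediate from $\sigma_0(J)_1=j_1$). For the sign, I would observe that $\sigma_0$ is lex-order preserving: if $J<J'$ with $j_\ell=j'_\ell$ for $\ell<i$ and $j_i<j'_i$, then $\sigma_0(J)_\ell=\sigma_0(J')_\ell$ for $\ell<i$ and $\sigma_0(J)_i=j_i-j_{i-1}<j'_i-j'_{i-1}=\sigma_0(J')_i$. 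As an order-preserving bijection between equinumerous lex-ordered sets, $\sigma_0$ is the identity permutation in those orderings, so $\mathrm{sgn}(\sigma_0)=+1$.

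The main obstacle is to show that no other bijection contributes. My starting lever is that $g_{m,N}$ contains no factor $\xi_{i,0}$; since $\sigma(J)_i\geq0$ always, this vanishing exponent forces $\sigma(J)_i=0$ whenever $j_i=0$. Consequently $\sigma$ preserves the filtration $\{J:j_1=\cdots=j_s=0\}\to\{K:k_1=\cdots=k_s=0\}$ for each $s$. I would then iterate on this, combining the filtration with the remaining exponent identities and bijectivity to pin down $\sigma(J)$ one $J$ at a time. A convenient extremal start is the exponent $N$ of $\xi_{1,N}$ in $g_{m,N}$: because $(N,\ldots,N)$ is the \emph{only} $J\in\Ga_{m,N}$ with $j_1=N$, we must have $\sigma(N,\ldots,N)_1=N$, hence $\sigma(N,\ldots,N)=(N,0,\ldots,0)$. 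Peeling off maximal coordinates and cascading this sort of extremality (equivalently, inducting on $m$ via the filtration above) should force $\sigma=\sigma_0$. The delicate point is that $g_{m,N}$ does not restrict literally to $g_{m-1,N}$ on the filtered subproblem, so the inductive hypothesis has to be framed uniformly enough that the cascade closes on itself.

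Once uniqueness is in hand, $\sigma_0$ is the only bijection contributing to the coefficient of $g_{m,N}$ in $\det W_{m,N}(\xi)$, and combined with $\mathrm{sgn}(\sigma_0)=+1$ this gives the coefficient exactly $1$, as required.
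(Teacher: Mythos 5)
Your overall strategy matches the paper's: expand $\det W_{m,N}(\xi)$ by Leibniz over bijections $\Ga_{m,N}\to\La_{m,N}$, identify the telescoping-difference map $\sigma_0(J)=(j_1,j_2-j_1,\ldots,j_m-j_{m-1})$ as the unique contributor, and compute $\mathrm{sgn}(\sigma_0)=+1$ by observing that it is order-preserving between the two lex-ordered index sets. Your verification that $\sigma_0$ produces the monomial $g_{m,N}$ (via the hockey-stick collapse of $\sum_{a}(k-a)\binom{a+i-2}{i-2}$) is correct, and your sign computation is exactly the paper's. The filtration observation is also right: the absence of $\xi_{i,0}$ from $g_{m,N}$ forces $\sigma_i(J)=0$ whenever $j_i=0$, and the two sides of each filtration stratum do indeed have matching cardinalities.

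The genuine gap is the uniqueness step, which you yourself flag as incomplete ("should force $\sigma=\sigma_0$", "the delicate point"). Your plan mixes a bottom-up observation ($j_i=0\Rightarrow\sigma_i(J)=0$) with a top-down extremal start ($j_1=N\Rightarrow\sigma(N,\ldots,N)=(N,0,\ldots,0)$), but the cascade that is supposed to pin down $\sigma$ on all of $\Ga_{m,N}$ is never actually closed, and your own remark about $g_{m,N}$ not literally restricting to $g_{m-1,N}$ signals that the inductive hypothesis you would need isn't formulated. The paper resolves this cleanly by inducting not on $m$ but on the coordinate index $i$, and tracking the \emph{partial sums} $(\sigma_1+\cdots+\sigma_i)(J)$ rather than the individual $\sigma_i$. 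Concretely, it shows $(\sigma_1+\cdots+\sigma_i)(J)=j_i$ for all $J$ by a sub-induction on $k$ from $0$ upward: the exponent constraint gives $\sum_{J\in\Ga_{m,N}^{i,k}}(\sigma_1+\cdots+\sigma_i)(J)=k\abs{\Ga_{m,N}^{i,k}}$, and once the strata $0,\ldots,k-1$ are exhausted, bijectivity forces each remaining value to be at least $k$, so the average-equals-maximum identity pins every summand to exactly $k$. This "average is $k$, lower bound is $k$, hence all equal $k$" mechanism is the missing engine in your sketch; adopting partial sums as the inductive quantity is what makes the $i$-induction close without having to wrestle with the mismatch between $g_{m,N}$ and $g_{m-1,N}$.
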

\begin{proof}
Let $\scal_{m,N}$ be the set of bijections from $\Ga_{m,N}$ to $\La_{m,N}$ and $\forall\ \si\in\scal_{m,N}$, $J\in\Ga_{m,N}$, write $\si(J)=(\si_1(J),\dots,\si_m(J))$. Then
\begin{align*}
\det{W_{m,N}(\xi)}=\sum_{\si\in\scal_{m,N}}sgn({\si})\prod_{J\in\Ga_{m,N}}\xi_{J}^{\si(J)}=\sum_{\si\in\scal_{m,N}}sgn({\si})\prod_{J\in\Ga_{m,N}}\xi_{1,j_1}^{\si_1(J)}\cdots\xi_{m,j_m}^{\si_m(J)}.
\end{align*}
To find those $\si\in\scal_{m,N}$ ending up with $g_{m,N}(\xi)$, it is equivalent to find $\si$ satisfying $\forall\ 1\leq i\leq m$,
\begin{align}\label{eq3}
\sum_{J\in\Ga_{m,N}^{i,k}}\si_i(J)=
\begin{cases}
\binom{k+i-1}{i}\binom{N-k+m-i}{m-i}\ &1\leq k\leq N, \\
0\ &k=0,
\end{cases}
\end{align}
where the set $\Ga_{m,N}^{i,k}$ is defined in (\ref{Ga(m,N,i,k)}). We are going to prove by induction that
\begin{align}\label{si}
\si(J)=(j_1,j_2-j_1,\dots,j_m-j_{m-1})\text{ for all }J\in\Ga_{m,N}.
\end{align}
First of all, similar to $\Ga_{m,N}^{i,k}$, we introduce
\begin{align*}
\La_{m,N}^{i,k}=\{(k_1,\dots,k_m)\in\La_{m,N}:k_1+\dots+k_i=k\},
\end{align*}
\begin{align*}
\La_{m,N}=\di\bigsqcup_{k=0}^{N}\La_{m,N}^{i,k},\forall\ 1\leq i\leq m\text{ and }
\abs{\La_{m,N}^{i,k}}=\binom{k+i-1}{i-1}\binom{N-k+m-i}{m-i}=\abs{\Ga_{m,N}^{i,k}}.
\end{align*}
When $i=1$, (\ref{eq3}) shows
\begin{align}\label{eq4}
\sum_{J\in\Ga_{m,N}^{1,k}}\si_1(J)=k\binom{N-k+m-1}{m-1},\ 0\leq k\leq N,
\end{align}
where the number of terms in the summation on the left is $\abs{\Ga_{m,N}^{1,k}}=\binom{N-k+m-1}{m-1}=\abs{\La_{m,N}^{1,k}}$, $\forall\ 0\leq k\leq N$. Then
\begin{align*}
\begin{split}
&k=0\text{ in }(\ref{eq4})\ \Rightarrow\ \si(\Ga_{m,N}^{1,0})=\La_{m,N}^{1,0}\ \Rightarrow\ \si(\bigsqcup_{k=1}^{N}\Ga_{m,N}^{1,k})=\bigsqcup_{k=1}^{N}\La_{m,N}^{1,k}, \\
&k=1\text{ in }(\ref{eq4})\ \Rightarrow\ \si(\Ga_{m,N}^{1,1})=\La_{m,N}^{1,1}\ \Rightarrow\ \si(\bigsqcup_{k=2}^{N}\Ga_{m,N}^{1,k})=\bigsqcup_{k=2}^{N}\La_{m,N}^{1,k}, \\
&\dots \\
&k=N\text{ in }(\ref{eq4})\ \Rightarrow\ \si(\Ga_{m,N}^{1,N})=\La_{m,N}^{1,N},
\end{split}
\end{align*}
\begin{align*}
\Rightarrow \si_1(J)=j_1,\ \forall\ J\in\Ga_{m,N}.
\end{align*}
Now assume for some $1\leq i\leq m-1$, $(\si_1+\dots+\si_i)(J)=j_i$, $\forall\ J\in\Ga_{m,N}$. Then $\forall\ 1\leq k\leq N$,
\begin{align*}
\begin{split}
\sum_{J\in\Ga_{m,N}^{i+1,k}}(\si_1+\dots+\si_{i+1})(J)
&=\sum_{J\in\Ga_{m,N}^{i+1,k}}[j_i+\si_{i+1}(J)] \\
&=\sum_{j=0}^{k}j\abs{\Ga_{m,N}^{i,j}\cap\Ga_{m,N}^{i+1,k}}+\binom{k+i}{i+1}\binom{N-k+m-i-1}{m-i-1} \\
&=\sum_{j=0}^{k}j\binom{j+i-1}{i-1}\binom{N-k+m-i-1}{m-i-1}+\binom{k+i}{i+1}\binom{N-k+m-i-1}{m-i-1} \\
&=k\binom{k+i}{i}\binom{N-k+m-i-1}{m-i-1},
\end{split}
\end{align*}
where the second term in the second equality comes from (\ref{eq3}).
And for $k=0$,
\begin{align*}
\sum_{J\in\Ga_{m,N}^{i+1,0}}(\si_1+\dots+\si_{i+1})(J)=\sum_{J\in\Ga_{m,N}^{i+1,0}}[j_i+\si_{i+1}(J)]=0.
\end{align*}
So $\forall\ 0\leq k\leq N$,
\begin{align}\label{eq5}
\sum_{J\in\Ga_{m,N}^{i+1,k}}(\si_1+\dots+\si_{i+1})(J)=k\binom{k+i}{i}\binom{N-k+m-i-1}{m-i-1},
\end{align}
where the number of terms in the summation on the left is $\abs{\Ga_{m,N}^{i+1,k}}=\binom{k+i}{i}\binom{N-k+m-i-1}{m-i-1}=\abs{\La_{m,N}^{i+1,k}}$, $\forall\ 0\leq k\leq N$.
\begin{align*}
\begin{split}
&k=0\text{ in }(\ref{eq5})\ \Rightarrow\ \si(\Ga_{m,N}^{i+1,0})=\La_{m,N}^{i+1,0}\ \Rightarrow\ \si(\bigsqcup_{k=1}^{N}\Ga_{m,N}^{i+1,k})=\bigsqcup_{k=1}^{N}\La_{m,N}^{i+1,k}, \\
&k=1\text{ in }(\ref{eq5})\ \Rightarrow\ \si(\Ga_{m,N}^{i+1,1})=\La_{m,N}^{i+1,1}\ \Rightarrow\ \si(\bigsqcup_{k=2}^{N}\Ga_{m,N}^{i+1,k})=\bigsqcup_{k=2}^{N}\La_{m,N}^{i+1,k}, \\
&\dots \\
&k=N\text{ in }(\ref{eq5})\ \Rightarrow\ \si(\Ga_{m,N}^{i+1,N})=\La_{m,N}^{i+1,N},
\end{split}
\end{align*}
\begin{align*}
\Rightarrow (\si_1+\dots+\si_{i+1})(J)=j_{i+1},\ \forall\ J\in\Ga_{m,N}.
\end{align*}
Thus, (\ref{si}) is proved. And it is trivial to check that the $\si$ defined in (\ref{si}) satisfies all the equations in (\ref{eq3}). This means that there is only one $\si\in\scal_{m,N}$ that ends up with $g_{m,N}(\xi)$, and it turns out to be order preserving. Therefore,
\begin{align*}
\det{W_{m,N}(\xi)}=g_{m,N}(\xi)+\dots
\end{align*}
\end{proof}

\begin{bibdiv}
\begin{biblist}

\bib{D}{article}{
   author={Demailly, Jean-Pierre},
   title={Complex analytic and differential geometry},
   date={2012},
   eprint={http://www-fourier.ujf-grenoble.fr/~demailly/manuscripts/agbook.pdf},
}

\bib{GH}{book}{
   author={Griffiths, Phillip},
   author={Harris, Joseph},
   title={Principles of algebraic geometry},
   series={Wiley Classics Library},
   note={Reprint of the 1978 original},
   publisher={John Wiley \& Sons Inc.},
   place={New York},
   date={1994},
   pages={xiv+813},
   isbn={0-471-05059-8},
   review={\MR{1288523 (95d:14001)}},
}

\bib{HKPV}{book}{
   author={Hough, J. Ben},
   author={Krishnapur, Manjunath},
   author={Peres, Yuval},
   author={Vir{\'a}g, B{\'a}lint},
   title={Zeros of Gaussian analytic functions and determinantal point
   processes},
   series={University Lecture Series},
   volume={51},
   publisher={American Mathematical Society},
   place={Providence, RI},
   date={2009},
   pages={x+154},
   isbn={978-0-8218-4373-4},
   review={\MR{2552864 (2011f:60090)}},
}

\bib{N1}{article}{
   author={Nishry, Alon},
   title={Asymptotics of the hole probability for zeros of random entire
   functions},
   journal={Int. Math. Res. Not. IMRN},
   date={2010},
   number={15},
   pages={2925--2946},
   issn={1073-7928},
   review={\MR{2673715 (2011f:60099)}},
   doi={10.1093/imrn/rnp229},
}

\bib{PV}{article}{
   author={Peres, Yuval},
   author={Vir{\'a}g, B{\'a}lint},
   title={Zeros of the i.i.d.\ Gaussian power series: a conformally
   invariant determinantal process},
   journal={Acta Math.},
   volume={194},
   date={2005},
   number={1},
   pages={1--35},
   issn={0001-5962},
   review={\MR{2231337 (2007m:60150)}},
   doi={10.1007/BF02392515},
}

\bib{S}{article}{
   author={Sodin, M.},
   title={Zeros of Gaussian analytic functions},
   journal={Math. Res. Lett.},
   volume={7},
   date={2000},
   number={4},
   pages={371--381},
   issn={1073-2780},
   review={\MR{1783614 (2002d:32030)}},
   doi={10.4310/MRL.2000.v7.n4.a2},
}

\bib{ST}{article}{
   author={Sodin, Mikhail},
   author={Tsirelson, Boris},
   title={Random complex zeroes. III. Decay of the hole probability},
   journal={Israel J. Math.},
   volume={147},
   date={2005},
   pages={371--379},
   issn={0021-2172},
   review={\MR{2166369 (2007a:60028)}},
   doi={10.1007/BF02785373},
}

\bib{SZ}{article}{
   author={Shiffman, Bernard},
   author={Zelditch, Steve},
   title={Random polynomials with prescribed Newton polytope},
   journal={J. Amer. Math. Soc.},
   volume={17},
   date={2004},
   number={1},
   pages={49--108 (electronic)},
   issn={0894-0347},
   review={\MR{2015330 (2005e:60032)}},
   doi={10.1090/S0894-0347-03-00437-5},
}

\bib{SZZ}{article}{
   author={Shiffman, Bernard},
   author={Zelditch, Steve},
   author={Zrebiec, Scott},
   title={Overcrowding and hole probabilities for random zeros on complex
   manifolds},
   journal={Indiana Univ. Math. J.},
   volume={57},
   date={2008},
   number={5},
   pages={1977--1997},
   issn={0022-2518},
   review={\MR{2463959 (2010b:32027)}},
   doi={10.1512/iumj.2008.57.3700},
}

\bib{Z}{article}{
   author={Zrebiec, Scott},
   title={The zeros of flat Gaussian random holomorphic functions on $\Bbb
   C^n$, and hole probability},
   journal={Michigan Math. J.},
   volume={55},
   date={2007},
   number={2},
   pages={269--284},
   issn={0026-2285},
   review={\MR{2369936 (2009e:60118)}},
   doi={10.1307/mmj/1187646994},
}

\end{biblist}
\end{bibdiv}

\end{document}